\newtheorem{thmintro}{Theorem}
\newtheorem{corintro}[thmintro]{Corollary}
\newtheorem{printro}[thmintro]{Proposition}
\newtheorem{pbintro}[thmintro]{Problem}
\newtheorem{thm}{Theorem}[section]
\newtheorem{pr}[thm]{Proposition}
\newtheorem{cor}[thm]{Corollary}
\newtheorem{lm}[thm]{Lemma}
\theoremstyle{definition}
\newtheorem{defi}[thm]{Definition}
\newtheorem{defi-prop}[thm]{Proposition-Definition}
\newtheorem{ex}[thm]{Example}
\newtheorem{applicintro}[thmintro]{Application}
\theoremstyle{remark}
\newtheorem{rk}[thm]{Remark}
\newcommand{\A}{{\mathcal{A}}}
\newcommand{\B}{{\mathcal{B}}}
\newcommand{\C}{{\mathcal{C}}}
\newcommand{\D}{{\mathcal{D}}}
\newcommand{\I}{{\mathcal{I}}}
\newcommand{\M}{{\mathcal{M}}}
\newcommand{\red}{\mathrm{red}}
\newcommand{\J}{{\mathcal{J}}}
\newcommand{\Md}{\text{-}\mathbf{Mod}}
\newcommand{\Mdd}{\mathbf{Mod}\text{-}}
\newcommand{\id}{\mathrm{id}}
\newcommand{\res}{{\mathrm{res}}}
\newcommand{\op}{{\mathrm{op}}}
\newcommand{\Fp}{{\mathbb{F}_p}}
\newcommand{\Proj}{\mathbf{P}}
\newcommand{\GL}{\operatorname{GL}}
\newcommand{\Ext}{\mathrm{Ext}}
\newcommand{\Tor}{\mathrm{Tor}}
\newcommand{\Hom}{\mathrm{Hom}}
\newcommand{\Cr}{\mathrm{cr}}
\begin{document}

\title{Separation and excision in functor homology}

\author[A. Djament]{Aur\'elien Djament}
\address{CNRS, UMR 7539, Laboratoire Analyse, G\'eom\'etrie et Applications, LAGA, Universit\'e
Sorbonne Paris Nord, F-93430, Villetaneuse, France}
\email{djament@math.cnrs.fr}
\urladdr{https://djament.perso.math.cnrs.fr/}

\author[A. Touz\'e]{Antoine Touz\'e}
\address{Univ. Lille, CNRS, UMR 8524 - Laboratoire Paul Painlev\'e, F-59000 Lille, France}
\email{antoine.touze@univ-lille.fr}
\urladdr{https://pro.univ-lille.fr/antoine-touze/}
\thanks{This author is partly supported by the Labex CEMPI (ANR-11-LABX-0007-01)}

\subjclass[2020]{Primary 18A25, 18G15; Secondary 18E05, 18G31}

\keywords{Functor homology; polynomial functor}

\begin{abstract}
We prove separation and excision results in functor homology. These results explain how the global Steinberg decomposition of functors proved by Djament, Touzé and Vespa \cite{DTV} behaves in Ext and Tor computations. 
\end{abstract}

\maketitle

\selectlanguage{french}
\renewcommand{\abstractname}{R\'esum\'e}
\begin{abstract}
Nous démontrons des résultats de séparation et d'excision en homologie des foncteurs. Ces résultats expliquent comment la décomposition de Steinberg globale des foncteurs démontrée par Djament, Touzé et Vespa  \cite{DTV} se comporte dans les calculs d'Ext et de Tor. 
\end{abstract}

\selectlanguage{english}

\section{Introduction}
If $k$ is a commutative ring and $\C$ is a category, we let $k[\C]\Md$ stand for the abelian category of functors $\C\to k\Md$, and natural transformations between them. 
These categories are classical objects of representation theory, whose study goes back at least to Mitchell's foundational article \cite{Mi72}. If $\C=\Proj_R$ is the category of finitely generated projective right modules over a ring $R$, homological algebra in $k[\Proj_R]\Md$, is also notoriously related to invariants of $K$-theoretic nature such as the topological Hochschild homology of $R$ \cite{PiraWald} or the homology of $\GL(R)$ and its classical subgroups \cite{Sco,DjaR}.

Objects of $k[\C]\Md$ can be very complicated, but if $\C=\A$ is an additive category, the structure is greatly clarified by considering two classes of functors.
The first one is the class of \emph{polynomial functors}, initially introduced by Eilenberg and MacLane \cite{EML}. Such functors are a natural generalization of additive functors; a typical example of a polynomial functor $T$ of degree $d$ can be constructed from $d$ additive functors $A_i$ by taking tensor products over $k$:
$T(x)= A_1(x)\otimes\cdots\otimes A_d(x)$. 
The second class of functors is the class of \emph{antipolynomial functors}, introduced in \cite{DTV}. Namely, an antipolynomial functor is a functor which factors through an additive category $B$ which has finite $\Hom$-sets, of cardinal invertible in $k$, cf. definition \ref{defi-A}. As the name suggests, the structural properties of antipolynomial and polynomial functors are essentially `orthogonal' and the only functors which are both polynomial and antipolynomial are the constant functors. 

As the global Steinberg decomposition of \cite[Thm 2]{DTV} shows it, a huge number of interesting objects of $k[\A]\Md$ can be created by mixing together these two classes of functors. To be more specific, a functor $B:\A\times\A\to k\Md$ is of \emph{antipolynomial-polynomial-type} (AP-type) if it is antipolynomial as a functor of its first variable and polynomial as a functor of the second variable, cf. definition \ref{defi-AP}. Such a bifunctor of AP-type yields an object $\Delta^*B$ of $k[\A]\Md$ defined by $(\Delta^*B)(x)=B(x,x)$.
The next theorem is the first main result of this article. It shows that this construction has a very rigid behavior from the homological point of view. We denote by $\Ext^*_{k[\C]}(F,G)$ the Ext between two objects of $k[\C]\Md$, and similarly for $\Tor$ (the first argument of the $\Tor$ must be an object of $\Mdd k[\C]$, that is, a contravariant functor from $\C$ to $k$-modules, see section \ref{sec-recoll} for more details on $\Ext$ and $\Tor$).
\begin{thmintro}[separation]\label{thm-AP-type}
Let $B$ and $C$ be bifunctors of AP-type. The natural graded morphisms
\begin{align*}
&\Ext^*_{k[\A\times\A]}(B,C)\xrightarrow{\simeq}\Ext^*_{k[\A]}(\Delta^*B,\Delta^*C)\;,\\
&\Tor_*^{k[\A]}(\Delta^*B,\Delta^*C)\xrightarrow{\simeq} \Tor_*^{k[\A\times \A]}(B,C)
\end{align*}
are isomorphisms.
\end{thmintro}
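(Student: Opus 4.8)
The plan is to prove both isomorphisms at once, the crucial point being that on functor categories the diagonal becomes an exact functor with an exact two-sided adjoint.

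\emph{Step 1: reduction via the sum–diagonal adjunctions.} Because $\A$ is additive, finite products and coproducts coincide, so the diagonal $\Delta\colon\A\to\A\times\A$ admits the sum functor $\sigma\colon\A\times\A\to\A$, $(x,y)\mapsto x\oplus y$, as \emph{both} a left and a right adjoint: $\Delta\dashv\sigma\dashv\Delta$. Since precomposition reverses adjunctions, $\sigma^*$ is simultaneously a left and a right adjoint of the (exact) functor $\Delta^*$; equivalently, both Kan extensions $\Delta_!$ and $\Delta_*$ along $\Delta$ equal the exact functor $\sigma^*$ and carry no higher derived functors. Substituting this into the derived adjunctions gives natural isomorphisms
\begin{align*}
\Ext^*_{k[\A]}(\Delta^*B,\Delta^*C)&\cong\Ext^*_{k[\A\times\A]}(B,\sigma^*\Delta^*C),\\
\Tor_*^{k[\A]}(\Delta^*B,\Delta^*C)&\cong\Tor_*^{k[\A\times\A]}(B,\sigma^*\Delta^*C),
\end{align*}
where $(\sigma^*\Delta^*C)(x,y)=C(x\oplus y,\,x\oplus y)$, under which the morphisms of the theorem become, respectively, the maps induced by the unit $\eta_C\colon C\to\sigma^*\Delta^*C$ and the counit $\varepsilon_C\colon\sigma^*\Delta^*C\to C$ of these adjunctions. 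Concretely $\eta_C$ and $\varepsilon_C$ are obtained by applying $C$ to the coprojections $x,y\to x\oplus y$ and the projections $x\oplus y\to x,y$, so $\varepsilon_C\circ\eta_C=\mathrm{id}_C$; hence $\sigma^*\Delta^*C\cong C\oplus Q_C$ with $Q_C:=\operatorname{coker}\eta_C$, and the theorem is equivalent to
\begin{equation*}
\Ext^*_{k[\A\times\A]}(B,Q_C)=0\quad\text{and}\quad\Tor_*^{k[\A\times\A]}(B,Q_C)=0
\end{equation*}
for all AP-type bifunctors $B$ and $C$ (with the evident variances in the $\Tor$ statement).

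\emph{Step 2: reduction to external tensor products.} Filtering $C$ by its polynomial degree in the second variable and replacing the homogeneous subquotients by their cross-effects (again bifunctors of AP-type, in extra variables), and doing the same for $B$, one reduces to the case $B=b\boxtimes T$, $C=c\boxtimes S$ with $b,c$ antipolynomial — pulled back from a finite additive category with $\Hom$-sets of cardinal invertible in $k$ — and $T,S$ polynomial, indeed tensor products of additive functors as in the introduction. That $\Ext^*_{k[\A\times\A]}$, $\Tor_*^{k[\A\times\A]}$ and the assignment $C\mapsto Q_C$ are compatible with the colimits and resolutions used here is routine, given the structure theory recalled in Section~\ref{sec-recoll}. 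The effect of the construction $Q_C$ is that its first variable — which in $C$ was only antipolynomial — now acquires in addition a polynomial dependence of positive degree, through the slot $x\oplus y$ that was a polynomial argument of $C$, while the antipolynomial direction is untouched.

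\emph{Step 3: separation and the main obstacle.} For external tensor products a Künneth argument splits $\Ext^*_{k[\A\times\A]}(b\boxtimes T, c\boxtimes S)$ and $\Tor_*^{k[\A\times\A]}$ as tensor products of the $\Ext$ (resp.\ $\Tor$) of the antipolynomial factors with those of the polynomial factors — the requisite flatness comes from the standard projective generators of bounded-degree polynomial functors, and the antipolynomial factors are treated by descending to the finite quotient category, where the invertibility of the $\Hom$-set cardinalities furnishes an averaging (semisimplicity-type) argument. It then remains to show that the ``entangled'' bifunctors composing $Q_C$ — which are at once antipolynomial and polynomial of positive degree \emph{in the same variable} — are $\Ext$- and $\Tor$-orthogonal to genuine AP-type bifunctors. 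This is the step I expect to be the main obstacle: it is not covered by the elementary orthogonality between polynomial and antipolynomial functors (whose common functors are the constants), and seems to need the full force of the adjunctions relating polynomial, antipolynomial and arbitrary functors that underlie the global Steinberg decomposition of \cite{DTV}, applied one variable at a time, rather than a mere degree count.
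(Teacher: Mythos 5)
Your Step~1 is correct and matches the paper's starting point (Proposition~\ref{pr-sum-diagonal}): the adjunction $\Delta\dashv\Sigma\dashv\Delta$ makes both restriction functors exact with exact adjoints, so the two maps of the theorem are indeed induced by the split monomorphism $\eta_C\colon C\to\Sigma^*\Delta^*C$ and the retraction $\varepsilon_C$. Correctly identifying that the whole theorem amounts to an $\Ext$- and $\Tor$-vanishing for the complementary summand is real progress. But the rest of the argument has a genuine gap, and your own Step~3 says as much.

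The decisive ingredient that you flag as ``the main obstacle'' is exactly the point where the paper does the actual work, and you do not supply it. The paper first reduces the \emph{first} argument $B$ to bifunctors of \emph{special} AP-type via a resolution (Lemma~\ref{lm-resolution}), and only then applies the sum--diagonal adjunction; because $B(x,y)=k[\A/\I(z,x)]\otimes F(y)$ is an honest box product, the decomposition of $\Sigma^*\Delta^*B$ into $B\oplus X\oplus Y$ is explicit and the complementary pieces $X,Y$ are each of the form (reduced antipolynomial)$\otimes$(polynomial) in a single variable, which lets Lemma~\ref{lm-annul} apply. Lemma~\ref{lm-annul} in turn rests on Lemma~\ref{lm-vanish-pol}: a bar resolution of the augmentation ideal of $k[\A/\I(z,-)]$, acyclic because the groups $\A/\I(z,x)$ have vanishing $k$-homology (Lemma~\ref{lm-EML-vanish}), combined with a hypercohomology spectral sequence and descending induction against the polynomial degree of $F$, via Pirashvili's vanishing (Proposition~\ref{pr-eq}). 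This is not a degree count, nor does it invoke the Steinberg decomposition machinery of \cite{DTV}; it is an elementary (if nontrivial) bar-resolution argument. Your proposal asserts that something of this kind must exist but does not construct it, so the core of the proof is missing.

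Secondary issues: your Step~2 over-reduces. Resolving both $B$ \emph{and} $C$, and further reducing the polynomial factors to tensor products of \emph{additive} functors, is neither needed nor generally possible (polynomial functors are not resolvable by tensor products of additives in this setting); the paper keeps $C$ arbitrary of AP-type and only resolves $B$. Your Step~3's appeal to a K\"unneth splitting also quietly imports hypotheses ($k$ a field, $\mathrm{fp}_\infty$) that appear in the corollary in the introduction but not in Theorem~\ref{thm-AP-type}, and even granting them, that splitting is a \emph{consequence} of the theorem, not a step toward proving the vanishing of $\Ext^*(B,Q_C)$. Finally, your description of the obstruction (``antipolynomial and polynomial of positive degree in the same variable'') is accurate, but working on the $C$-side of the adjunction as you do makes that entanglement harder to untangle than the paper's choice of decomposing the $B$-side, where the special-AP reduction makes $\Sigma^*\Delta^*B$ split into tractable box products.
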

Our separation theorem essentially says that there is no homological interaction between the antipolynomial and the polynomial part of a functor. This property is probably best understood when the bifunctors in play are constructed from tensor products. Given two functors $F$ and $G$ from $\A$ to $k$-modules, we denote by $F\otimes G$ their objectwise tensor product over $k$, that is $(F\otimes G)(x)=F(x)\otimes G(x)$. The separation theorem has the following consequence.
\begin{corintro}[Separation for tensor products]Assume that $k$ is a field, that $A_1$ and $A_2$ are antipolynomial functors and that $P_1$ and $P_2$ are polynomial functors. There is a graded isomorphism:
$$\Tor_*^{k[\A]}(A_1\otimes P_1,A_2\otimes P_2)\simeq \Tor_*^{k[\A]}(A_1,A_2)\otimes \Tor_*^{k[\A]}(P_1,P_2)\;.$$
If in addition $A_1$ and $P_1$ are of type $\mathrm{fp}_\infty$ there is also a graded isomorphism:
$$\Ext^*_{k[\A]}(A_1\otimes P_1,A_2\otimes P_2)\simeq \Ext^*_{k[\A]}(A_1, A_2)\otimes \Ext^*_{k[\A]}(P_1, P_2)\;.$$
\end{corintro}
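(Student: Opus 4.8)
The plan is to derive the corollary from the separation theorem by importing a Künneth formula over the product category $\A\times\A$. For functors $F,G\colon\A\to k\Md$ (with the evident variances in the two cases), write $F\boxtimes G$ for the bifunctor $\A\times\A\to k\Md$ sending $(x,y)$ to $F(x)\otimes_k G(y)$, so that $\Delta^*(F\boxtimes G)=F\otimes G$. First I would check that $A_i\boxtimes P_i$ is of AP-type: for fixed $y$, the functor $x\mapsto A_i(x)\otimes_k P_i(y)$ is antipolynomial, because if $A_i$ factors through an additive category $\B$ with finite $\Hom$-sets of cardinal invertible in $k$ then so does $A_i(-)\otimes_k V$ for any fixed $k$-module $V$; and for fixed $x$, the functor $y\mapsto A_i(x)\otimes_k P_i(y)$ is polynomial of degree at most $\deg P_i$, since tensoring a polynomial functor by a constant functor preserves polynomiality. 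Hence the separation theorem applies to the pair $(A_1\boxtimes P_1,A_2\boxtimes P_2)$ and gives graded isomorphisms $\Ext^*_{k[\A]}(A_1\otimes P_1,A_2\otimes P_2)\cong\Ext^*_{k[\A\times\A]}(A_1\boxtimes P_1,A_2\boxtimes P_2)$ and $\Tor_*^{k[\A]}(A_1\otimes P_1,A_2\otimes P_2)\cong\Tor_*^{k[\A\times\A]}(A_1\boxtimes P_1,A_2\boxtimes P_2)$. It therefore remains to prove the two Künneth isomorphisms over $\A\times\A$.

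For this I would use three facts. (a) The external tensor product of the representable functors $k[\A](a,-)$ and $k[\A](b,-)$ is the representable functor $k[\A\times\A]\big((a,b),-\big)$, because $k[S\times T]\cong k[S]\otimes_k k[T]$ for sets $S$ and $T$; since $\boxtimes$ preserves direct sums and retracts in each variable, it thus carries pairs of (finitely generated) projectives to (finitely generated) projectives. (b) Because $k$ is a field, every functor to $k\Md$ has flat values, so for projective resolutions $R_\bullet\to A_1$ and $S_\bullet\to P_1$ the complex $R_\bullet\boxtimes S_\bullet$ (the total complex of the bicomplex $R_p\boxtimes S_q$) is, by the Künneth theorem over $k$, again a projective resolution of $A_1\boxtimes P_1$, degreewise finitely generated if $R_\bullet$ and $S_\bullet$ are. (c) The tensor product over $k[\A\times\A]$, being a coend over $\A\times\A$, factors as a coend over $x$ followed by a coend over $y$, whence $(R_\bullet\boxtimes S_\bullet)\otimes_{k[\A\times\A]}(A_2\boxtimes P_2)\cong(R_\bullet\otimes_{k[\A]}A_2)\otimes_k(S_\bullet\otimes_{k[\A]}P_2)$; dually $\Hom_{k[\A\times\A]}(Q\boxtimes Q',A_2\boxtimes P_2)\cong\Hom_{k[\A]}(Q,A_2)\otimes_k\Hom_{k[\A]}(Q',P_2)$ for $Q,Q'$ finitely generated projective — true for representables by (a), then for retracts of finite direct sums of representables by naturality and additivity. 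Combining (b) and (c) with the Künneth theorem for complexes of $k$-vector spaces (which has no correction term over a field, and in which only finite direct sums occur since the resolutions are nonnegatively graded) yields $\Tor_*^{k[\A\times\A]}(A_1\boxtimes P_1,A_2\boxtimes P_2)\cong\Tor_*^{k[\A]}(A_1,A_2)\otimes_k\Tor_*^{k[\A]}(P_1,P_2)$, and, when $A_1$ and $P_1$ are $\mathrm{fp}_\infty$, the analogous isomorphism for $\Ext$.

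The $\mathrm{fp}_\infty$ hypothesis is used only in step (c) on the $\Ext$ side: $\Hom_{k[\A\times\A]}(-\boxtimes-,A_2\boxtimes P_2)$ converts an external tensor product into a tensor product of $\Hom$-groups only when the source is assembled from finitely many representables, so one needs $A_1$ and $P_1$ to admit resolutions by finitely generated projectives in order that $R_\bullet\boxtimes S_\bullet$ be such a resolution of $A_1\boxtimes P_1$. No finiteness is needed for $\Tor$, since the coend commutes with all colimits, hence with arbitrary external tensor products of (merely flat) resolutions. I expect the only mildly delicate points to be the exactness assertion in (b) — the Künneth computation keeping $R_\bullet\boxtimes S_\bullet$ a resolution — and checking that the separation isomorphism and the product isomorphism of (c) are compatible with the external-product pairings on $\Ext$ and $\Tor$; both are routine once one works over a field.
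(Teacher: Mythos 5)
Your proposal is correct and follows the same route as the paper's proof: apply the separation theorem to replace $\Ext/\Tor$ of the diagonal restrictions of $A_1\boxtimes P_1$ and $A_2\boxtimes P_2$ by the corresponding $\Ext/\Tor$ over $\A\times\A$, then invoke a Künneth isomorphism for the product category. The only difference is that the paper cites its proposition \ref{pr-Kunneth} (for $\Ext$) and an external reference (for $\Tor$), whereas you re-prove these Künneth statements inline; your steps (a)--(c) match the paper's proof of proposition \ref{pr-Kunneth} almost verbatim.
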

\begin{proof}
Set $B(x,y)=A_1(x)\otimes P_1(y)$ and $C(x,y)=A_2(x)\otimes P_2(y)$. The separation theorem shows that $\Ext$ and $\Tor$ between $A_1\otimes P_1$ and $A_2\otimes P_2$ are isomorphic to $\Ext$ and $\Tor$ between $B$ and $C$. The latter can be computed with the classical K\"unneth formula, see for example \cite[Lm 8.2]{DT-Add} for the $\Tor$ version and proposition \ref{pr-Kunneth} for the $\Ext$ version.
\end{proof}

\begin{rk}[The interest of tensor products]
It is proved in \cite[Thm 4.12]{DTV} that if $k$ is a field containing all the roots of unity, then every simple functor with finite dimensional values is a tensor product $A\otimes P$ where $A$ is a simple antipolynomial functor and $P$ is a simple polynomial functor.
\end{rk}

The separation theorem reduces the understanding of $\Ext$ and $\Tor$ between many functors of interest to the following general problem.
\begin{pbintro}\label{pbintro}
Understand $\Ext$ and $\Tor$ between two functors of the same type, that is, between two polynomial functors or between two antipolynomial functors. 
\end{pbintro}

A great deal of work has been done over the past thirty years to understand problem  \ref{pbintro}, at least in its polynomial part, and rather satisfying answers are known when the category $\A$ is not "too complicated". For example, if $\A$ is the category of finite dimensional vector spaces over a finite field and $k$ is a field of the same characteristic, the work of Franjou Friedlander Scorichenko Suslin \cite{FFSS} has shown that in many cases $\Ext$ between polynomial functors can be computed in terms of $\Ext$ between "Frobenius twisted" strict polynomial functors -- or equivalently \cite[Thm 3.2]{FS} in terms of $\Ext$ between "Frobenius twisted" modules over classical Schur algebras. The latter $\Ext$ are now well-understood, see the survey \cite{Touze-Survey} for the "untwisting formulas" and connections to other problems.    
The results of \cite{FFSS} have been recently generalized \cite{DT-Add,DT-Documenta} when $\A$ is a linear category over some field and $k$ is a field of the same characteristic, but we are still far from understanding the case of an arbitrary additive category $\A$. Similarly, the homological properties of antipolynomial functors  are relatively well-understood when $\A$ is the category of finite dimensional vector spaces over a finite field and $k$ is a field of different characteristic by the work of Kuhn \cite{Ku-adv}, but \textit{very little} is understood over an arbitrary additive category $\A$, because there is a big gap between the situation studied by Kuhn \cite{Ku-adv} and the structure of general antipolynomial functor categories, much more intricate (see \cite{DG}). 

The next main result of the present article is an excision theorem, namely theorem \ref{thm-intro-2}. This excision theorem brings the study of problem \ref{pbintro} for general $\A$ closer to the cases where it is currently understood, by showing that in some cases the category $\A$ can be replaced by a simpler quotient category $\B$ without altering $\Ext$ and $\Tor$. It is particularly useful for antipolynomial functors (see application~\ref{exintro1}).

To be more specific, let $\phi:\A\to \B$ be an additive quotient, that is, $\phi$ is a full and essentially surjective additive functor between two additive categories. Then every object of $F$ of $k[\B]\Md$ yields an object $\phi^*F$ of $k[\A]\Md$ defined by $(\phi^*F)(x)=F(\phi(x))$. An elementary argument shows that composition with $\phi$ yields an isomorphism:
$$\Hom_{k[\B]}(F,G)\simeq \Hom_{k[\A]}(\phi^*F,\phi^*G)\;.$$
Our first excision result shows that under an elementary hypothesis on $\B$, such an isomorphism extends to higher $\Ext$ (and also to $\Tor$). 
\begin{thmintro}[excision]\label{thm-intro-2}
Let $\phi:\A\to \B$ be an additive quotient, such that $\B(x,x)\otimes_{\mathbb{Z}} k=0$ for all objects $x$ of $\B$. Then for all functors $F$ and $G$ defined on $\B$, the natural graded morphisms 
\begin{align*}
&\Ext^*_{k[\B]}(F,G)\xrightarrow{\simeq}\Ext^*_{k[\A]}(\phi^*F,\phi^*G)\;,\\
&\Tor_*^{k[\A]}(\phi^*F,\phi^*G)\xrightarrow{\simeq} 
\Tor_*^{k[\B]}(F,G)
\end{align*}
are isomorphisms.
\end{thmintro}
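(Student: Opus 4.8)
The plan is to identify the two maps with comparison maps of derived Kan‑extension adjunctions along $\phi$, to reduce the whole statement to a single $\Tor$‑vanishing between pulled‑back representable functors, and then to attack that vanishing with the hypothesis; the last step is where I expect the real difficulty.

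Precomposition with $\phi$ is an exact functor $\phi^{*}\colon k[\B]\Md\to k[\A]\Md$; since both categories are complete and cocomplete it has a left adjoint $\phi_{!}=\Lan_{\phi}$ and a right adjoint $\phi_{*}=\mathrm{Ran}_{\phi}$. As $\phi$ is full and essentially surjective, the elementary isomorphism recalled just before the statement is precisely the assertion that $\phi^{*}$ is fully faithful, i.e.\ that the counit $\phi_{!}\phi^{*}\to\id$ and the unit $\id\to\phi_{*}\phi^{*}$ are isomorphisms. Since $\phi^{*}$ is exact it is its own derived functor, and under the derived adjunction $\mathbf{R}\Hom_{k[\A]}(\phi^{*}F,\phi^{*}G)\simeq\mathbf{R}\Hom_{k[\B]}(F,\mathbf{R}\phi_{*}\phi^{*}G)$ the $\Ext$‑morphism of the theorem is the one induced by the unit $G\to\mathbf{R}\phi_{*}\phi^{*}G$, while (on the contravariant side) the $\Tor$‑morphism is induced by the counit $\mathbf{L}\phi_{!}\phi^{*}F\to F$. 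So it suffices to prove that $R^{i}\phi_{*}(\phi^{*}G)=0$ and $L_{i}\phi_{!}(\phi^{*}F)=0$ for $i>0$ and all $F$, $G$; applying the same reasoning to $\phi^{\op}\colon\A^{\op}\to\B^{\op}$, which satisfies the same hypothesis, shows that the $\Ext$ and the $\Tor$ halves of the theorem are equivalent. Next I would reduce to representables by a change‑of‑rings argument. Write $k[\B]=k[\A]/\J$ where $\J$ is the ideal of the $k$‑linear category $k[\A]$ with $\J(z,z')=\ker\bigl(k[\A(z,z')]\twoheadrightarrow k[\B(\phi z,\phi z')]\bigr)$; the image of $\phi^{*}$ is exactly the full subcategory of $k[\A]$‑modules killed by $\J$. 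For such a module $M$ and a $k[\A]$‑projective resolution $P_{\bullet}\to M$, the complex $k[\B]\otimes_{k[\A]}P_{\bullet}$ consists of projective $k[\B]$‑modules, has $H_{0}=k[\B]\otimes_{k[\A]}M=M$ and $H_{i}=\Tor_{i}^{k[\A]}(k[\B],M)$; so once $\Tor_{>0}^{k[\A]}(k[\B],M)=0$ it is a $k[\B]$‑projective resolution of $M$, and applying $\Hom_{k[\A]}(-,N)$ or $(-)\otimes_{k[\A]}N$ to $P_{\bullet}$ computes $\Ext^{*}_{k[\A]}$, resp.\ $\Tor_{*}^{k[\A]}$, between objects of the image of $\phi^{*}$ in terms of the corresponding groups over $k[\B]$, which makes the comparison maps isomorphisms (symmetrically for the contravariant side). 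Finally, $\Tor_{>0}^{k[\A]}(k[\B],M)=0$ for every $M$ killed by $\J$ follows formally from the bimodule vanishing $\Tor_{>0}^{k[\A]}(k[\B],k[\B])=0$ (resolve $k[\B]$ over $k[\A]$, base‑change, and observe that a $k[\B]$‑projective resolution of $k[\B]$ stays a resolution after $(-)\otimes_{k[\B]}M$); and as a left, resp.\ right, $k[\A]$‑module, $k[\B]$ is the direct sum over the objects of $\B$ — hence, by essential surjectivity, over the $\phi(x)$ — of the functors $\phi^{*}(k[\B(\phi(y),-)])=k[\B(\phi(y),\phi(-))]$, resp.\ $\phi^{*}(k[\B(-,\phi(x))])=k[\B(\phi(-),\phi(x))]$. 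So the whole theorem comes down to
\begin{equation*}
\Tor_{i}^{k[\A]}\bigl(k[\B(\phi(-),\phi(x))]\,,\,k[\B(\phi(y),\phi(-))]\bigr)=0\qquad\text{for all }i>0\text{ and all objects }x,y\text{ of }\A.
\end{equation*}

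This vanishing is the crux, and the main obstacle; it is where $\B(x,x)\otimes_{\mathbb{Z}}k=0$ must intervene. After splitting off the constant summand of each functor — the reduced complement vanishes at $0$, and the constant functor $\underline{k}=k[\A(0,-)]$, resp.\ $k[\A(-,0)]$, is projective — one is left with the $\Tor$ of the two reduced functors. The hypothesis enters as follows: the $k$‑module $k[\A(y,z)]$ is free over the group algebra $k[\I(y,z)]$ of the kernel ideal $\I=\ker\phi$ acting by translation, $k[\B(\phi(y),\phi(z))]$ is its module of coinvariants, and the hypothesis says precisely that the degree‑one quotient $I/I^{2}\cong\B(\phi(y),\phi(z))\otimes_{\mathbb{Z}}k$ of the augmentation ideal $I$ of $k[\B(\phi(y),\phi(z))]$ vanishes — equivalently, $\phi^{*}$ of a standard projective of $k[\B]$ has $\underline{k}$ as its only polynomial subfunctor and as its only polynomial quotient. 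The line of attack I would pursue is to resolve $k[\B(\phi(y),\phi(-))]$ in $k[\A]\Md$ by the bar‑type complex associated with the translation action of $\I$ — its degree‑$0$ term is the representable $k[\A(y,-)]$, its first syzygy is the row $\J(y,-)$ of $\J$, and its higher terms are objectwise $k$‑tensor products of copies of $z\mapsto k[\I(y,z)]$ with this representable — then tensor with $k[\B(\phi(-),\phi(x))]$ and show the resulting complex has homology concentrated in degree $0$. I expect the genuine work to be exactly this last verification: controlling the interplay between the $\I$‑bar construction and the coend over $\A$, and deducing the collapse from the vanishing of the degree‑one layer. Everything preceding it is formal.
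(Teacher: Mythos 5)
Your formal reduction is correct and coincides with the paper's Definition--Proposition~\ref{cor-memechose2}: the two halves of the statement are equivalent by duality (proposition~\ref{pr-memechose}), both follow from the full faithfulness of $\phi^*$ together with the vanishing of $\Tor_{i}^{k[\A]}\bigl(\phi^*P^x_{\B^\op},\phi^*P^y_\B\bigr)$ for $i>0$, and that vanishing is indeed the whole content. The Kan-extension language you use is an equally valid packaging of the same facts.

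The genuine gap is in the crux, and it is larger than you estimate. You propose to resolve $\phi^*P^y_\B=k[\B(\phi(y),\phi(-))]$ by the bar complex attached to the translation action of $\I=\ker\phi$, and to conclude the collapse of the tensored complex ``from the vanishing of the degree-one layer.'' Two problems arise. First, the terms of that bar complex in positive degree, $z\mapsto \bar I(y,z)^{\otimes n}\otimes k[\A(y,z)]$, are neither projective nor obviously flat in $k[\A]\Md$, so tensoring it against $\phi^*P^x_{\B^\op}$ does not compute $\Tor^{k[\A]}$ without a further argument (a hyper-Tor spectral sequence whose collapse is precisely what one wants to prove). Second, and more fundamentally, the vanishing of $I/I^2\cong\B(\phi(y),\phi(z))\otimes_\mathbb{Z}k$ does not by itself force the higher bar homology to vanish: what is actually needed is the non-formal statement that an abelian group $A$ with $A\otimes_\mathbb{Z}k=0$ and $\Tor_1^\mathbb{Z}(k,A)=0$ has trivial reduced homology with $k$-coefficients in all degrees, which is the paper's Lemma~\ref{lm-EML-vanish} (and requires a genuine computation with Eilenberg--MacLane spaces and the K\"unneth theorem). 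You would also need to reconcile that the hypothesis is on $\B$, while your bar construction is built out of $\I$, whose groups need not be $k$-negligible.

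The paper's route is structurally different and sidesteps both issues. It first computes the $\Tor$ of $\phi^*h^x_{\B^\op}$ against $\phi^*h^y_\B$ in the category of \emph{additive} functors $\A\Md$, where the hypothesis $\B(x,x)\otimes_\mathbb{Z}k=0$ immediately forces (via Lemma~\ref{exoann}) all the relevant groups, and hence the torsion groups themselves, into a subcategory on which $k\otimes_\mathbb{Z}-$ and $\Tor_1^\mathbb{Z}(k,-)$ both vanish. It then transfers this to the full functor category via Lemma~\ref{lm-kcrochet-tens} (identifying $k[A]\otimes_{k[\A]}k[B]$ with $k[A\otimes_{\mathbb{Z}[\A]}B]$, and showing a projective additive simplicial resolution linearizes to a \emph{flat} resolution) and the $k$-local Hurewicz theorem (Corollary~\ref{cor-Hur}). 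Your proposal lacks both this passage through additive functors and the Hurewicz input, and these are precisely where the real work lies.
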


Theorem \ref{thm-intro-2} is obtained as a special case of a more general excision result that we state in theorem \ref{thm-magique-general}. The latter replaces the condition on $\B$ by a weaker (but more technical) condition, which generalizes the `$H$-unital' condition of Suslin and Wodzicki governing  excision in rational algebraic $K$-theory \cite{Suslin-Wodz}. We refer the reader to remark \ref{rk-excis} for more details regarding the link with excision in $K$-theory.

The typical way to use theorem \ref{thm-intro-2} in practice is explained in the following application to antipolynomial functors.
\begin{applicintro}\label{exintro1}
Let $A_1$ and $A_2$ be two antipolynomial functors of $k[\A]\Md$. 
Then there is an additive quotient $\phi:\A\to \B$ such that: 
\begin{enumerate}
\item[(i)] $\B$ is a category with finite $\Hom$ of cardinal invertible in $k$, 
\item[(ii)] there are objects $F_1$ and $F_2$ of $k[\B]\Md$ such that $\A_i=\phi^*F_i$ for $i=1,2$. 
\end{enumerate}
Indeed, each $A_i$ is antipolynomial, hence factors through an additive functor $\phi_i: \A\to \B_i$ whose codomain is a category with finite $\Hom$ of cardinal invertible in $k$. Let $\I_i$ denote the ideal of the morphisms of $\A$ cancelled by $\phi_i$ . Then $A_1$ and $A_2$ both factor through the additive quotient $\phi:\A\to \B=\A/(\I_1\cap\I_2)$, which satisfies (i). Theorem \ref{thm-intro-2} then shows the graded isomorphism:
$$\Ext^*_{k[\A]}(A_1,A_2)\simeq \Ext^*_{k[\B]}(F_1,F_2)$$ 
and a similar isomorphism for $\Tor$. 
Such isomorphisms are interesting because the computations in $k[\B]\Md$ are expected to be simpler than the computations in $k[\A]\Md$ -- for example the fact that $\B$ is $\Hom$-finite allows to use more combinatorial methods.  
\end{applicintro}

We give an illustration of the approach outlined in application \ref{exintro1} in  proposition \ref{pr-vanish}. Namely, we use theorem \ref{thm-intro-2} to deduce the following vanishing result from the results of Kuhn \cite{Ku-adv}. 
\begin{printro}
Assume that $k$ is a field of characteristic zero, and fix an additive quotient $\phi:\A\to \B=\Proj_R$, where $\Proj_R$ is the category of finitely generated (projective) modules over a finite semi-simple ring $R$.
Then we have 
$$\Ext^i_{k[\A]}(A_1,A_2)=0\;,\qquad \Tor_i^{k[\A]}(A_1,A_2)=0\;,$$
for all positive $i$, as soon as $A_1$ and $A_2$ both factor through $\phi$.
\end{printro}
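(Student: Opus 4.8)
The plan is to reduce, by means of the excision theorem \ref{thm-intro-2}, to a statement that lives entirely in the functor category $k[\Proj_R]\Md$, and then to feed in Kuhn's results from \cite{Ku-adv}.

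First I would check that $\phi\colon\A\to\B=\Proj_R$ satisfies the hypothesis of theorem \ref{thm-intro-2}. Since $R$ is a finite ring, every finitely generated projective $R$-module is a finite set, so for each object $x$ of $\B$ the abelian group $\B(x,x)=\End_R(x)$ is finite; as $\mathrm{char}\,k=0$, its order is invertible in $k$, and hence $\B(x,x)\otimes_{\mathbb Z}k=0$. Theorem \ref{thm-intro-2} therefore applies. Because $A_1$ and $A_2$ factor through $\phi$, we may write $A_i=\phi^*F_i$ with $F_i$ an object of $k[\B]\Md$, and the theorem provides graded isomorphisms $\Ext^*_{k[\A]}(A_1,A_2)\simeq\Ext^*_{k[\B]}(F_1,F_2)$ and $\Tor_*^{k[\A]}(A_1,A_2)\simeq\Tor_*^{k[\B]}(F_1,F_2)$. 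It thus suffices to prove that $k[\Proj_R]\Md$ has no higher $\Ext$ (equivalently $\Tor$) between any two objects, i.e. that this abelian category is semisimple.

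For the semisimplicity I would first reduce to the case of a finite field and then invoke \cite{Ku-adv}. By the Wedderburn theorems, a finite semi-simple ring is a finite product of matrix rings over finite fields, $R\simeq\prod_j M_{n_j}(\mathbb{F}_{q_j})$; Morita invariance then gives an equivalence of additive categories $\Proj_R\simeq\prod_j\Proj_{\mathbb{F}_{q_j}}$, whence an equivalence $k[\Proj_R]\Md\simeq k[\prod_j\Proj_{\mathbb{F}_{q_j}}]\Md$. Kuhn proves in \cite{Ku-adv} that, when $k$ is a field of characteristic zero, the category $k[\Proj_{\Fq}]\Md$ is semisimple. To pass from there to the product, I would use that the standard projective generators of $k[\prod_j\Proj_{\mathbb{F}_{q_j}}]\Md$ are external tensor products over $k$ of standard projective generators of the factors, that their endomorphism rings are the corresponding tensor products over $k$ of the finite-dimensional semisimple endomorphism rings of the factors — which stay semisimple because $k$ is of characteristic zero, hence these algebras are separable — and that a Künneth-type argument (cf. proposition \ref{pr-Kunneth}) then forces all higher $\Ext$ and $\Tor$ to vanish. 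Combining this with the previous paragraph yields $\Ext^i_{k[\A]}(A_1,A_2)=\Tor_i^{k[\A]}(A_1,A_2)=0$ for $i>0$.

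Granting the excision theorem, the proof is essentially formal: the verification of the hypothesis of theorem \ref{thm-intro-2} is routine, and the only genuine input is Kuhn's semisimplicity result. The points needing care are the reduction from an arbitrary finite semi-simple ring to a finite field sketched above, and making sure that the semisimplicity statement is used for the whole functor category, not only for functors with finite-dimensional values, so that the conclusion covers the a priori arbitrary functors $A_1,A_2$; if \cite{Ku-adv} only delivers the finitely presented case, the general case should follow from it by a standard filtered-colimit argument, using that $\Tor$ commutes with filtered colimits and that every object of the category at hand is built from projective (= finitely presented) ones.
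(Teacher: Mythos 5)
Your proposal is correct and matches the paper's strategy exactly in outline: verify that $\B(x,x)\otimes_{\mathbb Z}k=0$ (because $R$ is finite and $\operatorname{char}k=0$) so that Theorem \ref{thm-intro-2} (i.e.\ Theorem \ref{thm-magique-csq1}) reduces the claim to the semisimplicity of $k[\Proj_R]\Md$; then use Wedderburn and Morita invariance to reduce to a finite product $\prod_j\Proj_{\mathbb{F}_{q_j}}$; then feed in Kuhn's equivalence $k[\Proj_{\mathbb F_q}]\Md\simeq\prod_{n\ge0}k[\GL_n(\mathbb F_q)]\Md$ together with Maschke. This is precisely what the paper does (the semisimplicity statement is isolated there as Proposition \ref{pr-vanish-Kuhn}).

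The one place where you diverge is the last step, the passage from the single-field case to the product. The paper shows that each standard projective of $k[\prod_j\Proj_{\mathbb F_{q_j}}]\Md$ decomposes as an external tensor product of standard projectives, hence as a direct sum of external tensor products of simple functors, and then cites \cite[Cor.~3.13 and Prop.~3.7]{DTV} to conclude that such external tensor products are themselves simple, so every standard projective (hence every object) is semisimple and projective. You instead invoke a Künneth-type argument together with separability of the relevant endomorphism algebras in characteristic zero. As written this is a bit loose: Künneth on its own only controls $\Ext$ between \emph{external tensor product} functors, and the semisimplicity of the endomorphism ring of a single projective does not by itself imply the projective is semisimple. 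What does close the argument cleanly is exactly the observation underlying the paper's citation of \cite{DTV}: a simple functor $S$ on $\Proj_{\mathbb F_q}$ over $\operatorname{char}k=0$ has finite-dimensional values and its action factors through a finite-dimensional separable algebra, so $S\boxtimes T$ is a semisimple module over a tensor product of separable algebras. Your separability idea is the right one, but it needs to be applied at the level of the image algebras acting on the simple objects rather than merely their endomorphism rings. Finally, the concern you raise about restricting Kuhn's statement to finitely presented functors is moot: Kuhn's equivalence is for the whole functor category, and once standard projectives are known to be semisimple, every object of the category is automatically projective and semisimple, with no filtered-colimit argument required.
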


Theorem \ref{thm-intro-2} cannot be used to study the homological properties of polynomial functors. Indeed, by \cite[prop 2.13]{DTV}, the hypothesis that $\B(x,x)\otimes_{\mathbb{Z}}k=0$ for all $x$ implies that no nonconstant polynomial functor factors through $\B$. In theorem~\ref{thm-poly-excis} we prove a replacement of theorem~\ref{thm-intro-2} for polynomial functors, which relies on a lemma (\ref{lm-vanish-pol}) which is also useful for the separation theorem~\ref{thm-AP-type}. Nevertheless, this polynomial excision theorem is less important than theorem~\ref{thm-intro-2} for antipolynomial functors, because it seems to apply in more restricted situations.

\subsection*{Organization of the paper}
We start by short recollections of functor categories in section \ref{sec-recoll}, in order to fix notations and terminology, and to make the article self-contained. 

Section \ref{sec-3} is devoted to the proof of the excision theorem \ref{thm-magique-general} and its applications, including theorem \ref{thm-intro-2} of the introduction (which corresponds to theorem \ref{thm-magique-csq1}). 

Section \ref{sec-4} recalls some background on polynomial functors, and establishes the polynomial replacement for excision in theorem \ref{thm-poly-excis}. 
Finally, section \ref{sec-sep} builds on the results and notions of section \ref{sec-4} to establish the separation property (theorem \ref{thm-AP-type} of the introduction) in theorem \ref{thm-AP-type-bis}. 

The results of section \ref{sec-3}, as well as some of the results in section \ref{sec-4}, depend on some simplicial techniques and on some classical algebraic topology computations. Since we were unable to find references covering all the results that we needed, we have gathered all these results in an appendix in section \ref{sec-app}.

\subsection*{Some general notations and terminology}
We gather here some notations and terminology which are used throughout the article. 
\begin{enumerate}[$\bullet$]
\item The letter $k$ denotes a commutative ring, unadorned tensor products are taken over $k$. The notation $k[X]$ stands for the free $k$-module on a set $X$. We also say that $k[X]$ is the $k$-linearization of $X$.
\item Categories have a class of objects, and we require that the morphisms between any two objects form a set. We say that a category $\C$ is \emph{essentially small} if the isomorphism classes of objects of $\C$ form a set. We say that a category $\C$ is \emph{additive} if it is a $\mathbb{Z}$-category with a zero object and finite biproducts \cite[VIII.2]{ML}.
\item The letter $\A$ denotes an essentially  small additive category.
\item $\Proj_R$ denotes the category of finitely generated right $R$-modules over a ring $R$.
\item An \emph{additive quotient} is a functor $\phi:\A\to \B$ between two additive categories, which is full and essentially surjective.
\end{enumerate}

\section{Recollections of functor categories}\label{sec-recoll}
Foundations of homological algebra in functor categories are studied into details in \cite{Mi72}. In this section, we only recall the notations and the properties which will be useful to us.

\subsection{Abelian structure, monoidal structure, and $\Ext$}
Let $\C$ be a small category let $k$ be a commutative ring. 
We denote by $k[\C]\Md$ the category whose objects are the functors from $\C$ to $k$-modules and whose morphisms are the natural transformations (with the usual composition of natural transformations). This is an abelian  $k$-linear category with enough injectives and projectives, equipped with a symmetric monoidal product $\otimes$ induced by the tensor product over $k$, that is 
$(F\otimes G)(x)=F(x)\otimes G(x)$.
\begin{rk}
The notation $k[\C]\Md$ is inspired by the notations for modules over a ring, and $k[\C]$ recalls the form of the projective objects in functor categories, see section \ref{subsec-std} below, and see section \ref{subsec-add} for analogue of this notation for categories of additive functors.
\end{rk}

We denote by $\C(c,d)$ the $\Hom$-sets in $\C$ while we will use $\Hom_{k[\C]}(F,G)$ to denote the set of morphisms between two functors $F$ and $G$.
We denote by $\Mdd k[\C]$ the category of all functors from $\C^\op$ to $k$-modules. In other words:
$$\Mdd k[\C]=k[\C^\op]\Md\;.$$
Following the notation of for modules over rings, we also denote by $\Hom_{k[\C]}(F,G)$ the $k$-modules of homomorphisms in $\Mdd k[\C]$ (it will always be clear from the context if we work with left or right modules). Composition with a functor $\phi:\C\to \D$ yields restriction functors (denoted in the same way for left and right modules):
$$\phi^*:k[\D]\Md \to k[\C]\Md\;,\qquad \phi^*:\Mdd k[\D]\to \Mdd k[\C]\;.$$
These restriction functors are exact, and they hence induce natural transformations of $\delta$-functors
$$\res^\phi:\Ext^i_{k[\C]}(F,G)\to \Ext^i_{k[\C]}(\phi^*F,\phi^*G)\;.$$

\subsection{Tensor products over $\C$} We denote by $\Mdd k[\C]$ the category of all functors from $\C^\op$ to $k$-modules. In other words 
$$\Mdd k[\C]=k[\C^\op]\Md\;.$$
The tensor product
$$-{\otimes}_{{k[\C]}}-: \Mdd k[\C]\times k[\C]\Md\to k\Md$$ 
is defined by the coend formula $F\otimes_{k[\C]}G = \int^{c\in\C} F(c)\otimes G(c)$.
It is right exact with respect to each variable, and its derived functors are denoted by $\Tor_*^{k[\C]}(F,G)$. Every functor $\phi:\C\to \D$ induces a natural transformation of $\delta$-functors:
$$\res_\phi:\Tor_i^{k[\C]}(\phi^*F,\phi^*G)\to \Tor_i^{k[\D]}(F,G)\;.$$

\subsection{Duality} For all $k$-modules $M$ and for all functors $F:\C^\op\to k\Md$, we denote by $D_MF:\C\to k\Md$ the functor defined by
$$D_MF(c)=\Hom_k(F(c),M)\;.$$ 
This construction defines a duality functor $D_M:\Mdd k[\C]\to k[\C]\Md$. We use the same notation for the duality functor $D_M:k[\C]\Md\to \Mdd k[\C]$. There are isomorphisms, natural with respect to $F$, $G$ and $M$  
$$\Hom_{k[\C]}(F,D_M G)\simeq \Hom_k(F\otimes_{k[\C]}G,M)\simeq \Hom_{k[\C]}(G,D_M F)\;.$$
If $M$ is an injective $k$-module, these isomorphisms can be derived, to give isomorphisms of $\delta$-functors between $\Ext$ and $\Tor$. These isomorphisms are compatible with restriction along $\phi:\C\to \D$ in the following sense. 
\begin{pr}\label{pr-memechose}
Let $M$ be an injective $k$-module, and let $\phi:\C\to \D$ be a functor. There is a commutative diagram of morphisms of $\delta$-functors:
$$\begin{tikzcd}
\Ext^i_{k[\C]}(F,D_M G)\ar{d}{\simeq} \ar{rr}{\res^\phi}
&&
\Ext^i_{k[\D]}(\phi^*F,\phi^*D_M G)\ar{d}{\simeq}\\
\Hom_k(\Tor_i^{k[\C]}(F,G),M)\ar{d}{\simeq}\ar{rr}{\Hom_k(\res_\phi,M)}
&&
\Hom_k(\Tor_i^{k[\D]}(\phi^*F,\phi^*G),M)\ar{d}{\simeq} \\ 
\Ext^i_{k[\C]}(G,D_M F)\ar{rr}{\res^\phi}
&&
\Ext^i_{k[\D]}(\phi^*G,\phi^*D_M F)
\end{tikzcd}\;.$$
\end{pr}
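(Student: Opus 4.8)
The plan is to deduce the derived statement from its degree-zero counterpart by producing a single pair of compatible resolutions that simultaneously computes every group in the stated diagram. Fix a projective resolution $R_\bullet\to F$ in $\Mdd k[\C]$ and a projective resolution $S_\bullet\to \phi^*F$ in $\Mdd k[\D]$, together with a comparison chain map $\sigma\colon S_\bullet\to \phi^*R_\bullet$ over $\phi^*F$ (which exists because $\phi^*$ is exact, so $\phi^*R_\bullet\to\phi^*F$ is again a resolution, and is unique up to homotopy). Since $M$ is injective, $D_M$ is exact, carries projectives to injectives, and commutes with $\phi^*$; hence $D_MR_\bullet$ is an injective coresolution of $D_MF$ over $\C$, $D_MS_\bullet$ is an injective coresolution of $D_M\phi^*F=\phi^*D_MF$ over $\D$, while $R_\bullet\otimes_{k[\C]}G$ and $S_\bullet\otimes_{k[\D]}\phi^*G$ compute $\Tor_*^{k[\C]}(F,G)$ and $\Tor_*^{k[\D]}(\phi^*F,\phi^*G)$. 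The whole stated diagram is then obtained by applying $\rmH^*$ to a diagram of complexes with three rows and two columns: the $\C$-column is $\Hom_{k[\C]}(R_\bullet,D_MG)$, $\Hom_k(R_\bullet\otimes_{k[\C]}G,M)$, $\Hom_{k[\C]}(G,D_MR_\bullet)$ and the $\D$-column the evident analogues; the vertical arrows are the degree-wise duality isomorphisms; the two $\Ext$-rows carry the horizontal arrows $\sigma^*\circ\phi^*$ and $(D_M\sigma)_*\circ\phi^*$; and the $\Tor$-row carries $\Hom_k(-,M)$ applied to the composite $S_\bullet\otimes_{k[\D]}\phi^*G\to\phi^*R_\bullet\otimes_{k[\D]}\phi^*G\to R_\bullet\otimes_{k[\C]}G$, whose second map is the canonical projection-formula map and whose derived version is $\res_\phi$. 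Because $M$ is injective, $\rmH^*$ commutes with $\Hom_k(-,M)$, which is exactly what identifies the middle row of $\rmH^*$-values with $\Hom_k(\Tor_*,M)$.

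It remains to see that this diagram of complexes commutes. Since $\phi^*$ and $\sigma$ enter in the obvious way, and using the naturality of the duality isomorphisms in the resolution variable (which has no content of its own), the commutativity of both squares reduces, degree by degree, to one assertion in degree zero: for $X\in\Mdd k[\C]$ and $Y\in k[\C]\Md$, the square made of the isomorphism $\Hom_{k[\C]}(X,D_MY)\simeq\Hom_k(X\otimes_{k[\C]}Y,M)$, the restriction map $\phi^*$, the projection-formula map on $\otimes$, and the analogous isomorphism over $\D$ commutes, together with the entirely parallel statement for the variant $\Hom_{k[\C]}(Y,D_MX)\simeq\Hom_k(X\otimes_{k[\C]}Y,M)$ governing the bottom square. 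I would verify this by unwinding both isomorphisms through the universal property of $X\otimes_{k[\C]}Y=\int^{c\in\C}X(c)\otimes Y(c)$: a morphism $X\to D_MY$ is the same datum as a dinatural family of bilinear pairings $X(c)\times Y(c)\to M$, hence as a morphism out of the coend, and applying $\phi^*$ just restricts the family along $\phi$ — which is precisely precomposition with the cowedge maps $X(\phi d)\otimes Y(\phi d)\to\int^{c\in\C}X(c)\otimes Y(c)$ that define the projection-formula map. This is pure universal-property chasing with no content beyond it. The claim that the diagram is one of morphisms of $\delta$-functors is then a free decoration: $\phi^*$ is exact, hence commutes with the snake-lemma connecting maps, and the duality isomorphisms are isomorphisms of $\delta$-functors by hypothesis. (Alternatively, one can avoid resolutions entirely: $G\mapsto\Ext^*_{k[\C]}(F,D_MG)$ is a universal contravariant cohomological $\delta$-functor, since it is effaceable — for $P\twoheadrightarrow G$ with $P$ projective, $D_MP$ is injective, so $\Ext^{>0}_{k[\C]}(F,D_MP)=0$ — hence the two composites around each square, both being morphisms of $\delta$-functors out of it, coincide as soon as they agree in degree zero, which is once more the coend inspection.)

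The real difficulty is organizational rather than conceptual: one must set up the two-column diagram carefully, tracking which resolution computes which group and in which variable, and check that the usual recipe for $\res^\phi$ and $\res_\phi$ — apply $\phi^*$, then compare with a resolution of the appropriate type — is compatible with using the single pair $(R_\bullet,S_\bullet)$ throughout; this works once one observes that $D_MR_\bullet$ and $D_MS_\bullet$ serve as the needed injective coresolutions and $D_M\sigma$ as the needed comparison map. Beyond that, the only genuine computation is the elementary degree-zero coend check.
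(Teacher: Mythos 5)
Your proposal is correct. The paper states this proposition without proof, evidently regarding it as a routine compatibility check for the tensor–Hom adjunction under restriction, so there is no authorial argument to compare against. Your route is the natural one and holds up: fixing a single pair of projective resolutions $R_\bullet\to F$ and $S_\bullet\to\phi^*F$ with a comparison $\sigma\colon S_\bullet\to\phi^*R_\bullet$, and then exploiting that for $M$ injective the functor $D_M$ is exact, sends projectives to injectives (products of standard injectives suffice), and commutes strictly with $\phi^*$, lets you realize all six corners and all seven arrows of the diagram simultaneously by applying $\rmH^*$ to one three-row, two-column diagram of complexes. The only nontrivial commutativity then reduces, as you say, to the degree-zero statement that the adjunction $\Hom_{k[\C]}(X,D_MY)\simeq\Hom_k(X\otimes_{k[\C]}Y,M)$ is compatible with $\phi^*$ and the canonical map $\phi^*X\otimes_{k[\D]}\phi^*Y\to X\otimes_{k[\C]}Y$; this is exactly the coend/universal-property inspection you perform, and the remaining squares involving $\sigma$ or $D_M\sigma$ commute by naturality of the adjunction in the resolution variable. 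The observation that $\Hom_k(-,M)$ is exact, so it commutes with $\rmH_*$ and identifies the middle row with $\Hom_k(\Tor_*,M)$, is the last needed ingredient. Your parenthetical alternative — treating $G\mapsto\Ext^*_{k[\C]}(F,D_MG)$ as a universal $\delta$-functor via effaceability through $D_M(\text{projective})$ being injective, and then matching morphisms of $\delta$-functors on degree zero — is also sound and a bit slicker, since it avoids the bookkeeping of compatible resolutions entirely. Either way the proof is complete; no gaps.
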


\subsection{Standard objects of $k[\C]\Md$}\label{subsec-std}

The \emph{standard projectives} are the functors of the form $P^c =k[\C(c,-)]$, where $c$ is an object of $\C$. Thus $P^c$ maps every object $d$ to the free $k$-module on $\C(c,d)$. We may also denote this functor by $P_\C^c$ if we wish to emphasize the category $\C$. The Yoneda lemma yields an isomorphism, natural with respect to $F$ and $c$:
\begin{align}\Hom_{k[\C]}(P^c,F)\simeq F(c)\;,\label{eq-Yoneda}\end{align}
and there are `dual Yoneda isomorphisms' for tensor products
\begin{align}P^c\otimes_{k[\C]}F\simeq F(c)\;,\quad F\otimes_{k[\C]}P^c\simeq F(c)\;.\label{eq-Yoneda-dual}
\end{align}
Every object of $k[\C]\Md$ has a projective resolution by direct sums of standard projectives. 
The \emph{standard injectives} are the functors of the form $D_MP^c_{\C^\op}$, where $M$ is an injective $k$-module and $c$ is an object of $\C^\op$. Thus $D_MP^c_{\C^\op}(d)$ is naturally isomorphic to the $k$-module of maps from $\C(d,c)$ to $M$. Every object of $k[\C]\Md$ has an injective resolution by products of standard injectives.

\subsection{Additive categories and functors}\label{subsec-add}
If $\A$ is an essentially small additive category, we denote by $k\otimes_\mathbb{Z}\A\Md$ the full subcategory of 
$k[\A] \Md$ consisting of the additive functors. This is an abelian subcategory with enough injectives and projectives. 
The \emph{standard projectives} of $k\otimes_\mathbb{Z}\A\Md$ are the functors $h^a=k\otimes_\mathbb{Z}\A(a,-)$ which send every object $b$ of $\A$ to the $k$-module $k\otimes_\mathbb{Z}\A(a,b)$. We also denote the standard projectives by $h^a_\A$ if we want to emphasize the category $\A$. The Yoneda lemma yields an isomorphism of $k$-modules, natural with respect to $a$ and the additive functor $A$:
$$\Hom_{k\otimes_{\mathbb{Z}}\A}(h^a,A)\simeq A(a)\;,$$
and there are similar natural isomorphisms for the tensor products:
\[h^a\otimes_{k[\A]}A\simeq A(a)\;,\quad A\otimes_{k[\A]}h^a\simeq A(a)\;.\]
Every additive functor has a projective resolution by a direct sum of standard projectives. We will denote by $\Mdd k\otimes_{\mathbb{Z}}\A$ the full subcategory of $\Mdd k[\A]$ on the additive functors, and by 
$$\Ext^*_{k\otimes_{\mathbb{Z}}\A}(A,B)\;,\text{ and } \Tor_*^{k\otimes_{\mathbb{Z}}\A}(A,B)$$
the $\Ext$ and $\Tor$ in the additive setting. That is, they are the derived functors of the functors
$\Hom_{k[\A]}(A,-), A\otimes_{k[\A]}-: k\otimes_\mathbb{Z}\A\Md\to k\Md$.

\begin{rk}
Since additive functors are an abelian subcategory of all functors, we have canonical comparison maps:
$$\Ext^*_{k\otimes_\mathbb{Z}\A}(A,B)\to \Ext^*_{k[\A]}(A,B)\;, \text{ and } \Tor_*^{k[\A]}(A,B)\to \Tor_*^{k\otimes_\mathbb{Z}\A}(A,B)\;.$$
These comparison maps are usually far from being isomorphisms, but they are relatively well-understood if $k$ is a field, and if the $\Hom$ of $\A$ are vector spaces over the prime subfield of $k$, see \cite{DT-Add}.
\end{rk}
In the special case $k=\mathbb{Z}$, the categories of additive functors from $\A$ to $\mathbb{Z}$-modules will be denoted by $\A\Md$ and $\Mdd\A$. That is, we will simplify $\mathbb{Z}\otimes_\mathbb{Z}\A$ into $\A$ in the notations.

\subsection{Functors with several variables} 
We now review some classical results involving homological computations with functors of several variables. 
\begin{pr}\label{pr-no-Ext}
Let $B$ and $C$ be two objects of $k[\C\times\D]\Md$. Assume that one of the following two hypotheses holds.
\begin{enumerate}[(i)]
\item For all $x$ and $x'$ in $\C$ we have $\Ext^*_{k[\D]}(B(x,-),C(x',-))=0$.
\item For all $y$ and $y'$ in $\D$ we have $\Ext^*_{k[\C]}(B(-,y),C(-,y'))=0$.
\end{enumerate}
Then $\Ext^*_{k[\C\times\D]}(B,D)=0$.
\end{pr}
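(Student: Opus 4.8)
The plan is to reduce the vanishing of $\Ext^*_{k[\C\times\D]}(B,C)$ to the hypothesis by using a projective resolution of $B$ by standard projectives on $\C\times\D$ together with the identification of these with external tensor products. First I would recall that the standard projectives of $k[\C\times\D]\Md$ are exactly the functors $P^{(c,d)}_{\C\times\D}=k[(\C\times\D)((c,d),-)]$, and that there is a canonical isomorphism $P^{(c,d)}_{\C\times\D}\cong P^c_\C\boxtimes P^d_\D$, where $(F\boxtimes G)(x,y)=F(x)\otimes G(y)$; this uses that $(\C\times\D)((c,d),(x,y))=\C(c,x)\times\D(d,y)$ and that $k[S\times T]\cong k[S]\otimes k[T]$. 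Consequently every object of $k[\C\times\D]\Md$ admits a projective resolution by direct sums of functors of the form $P^c_\C\boxtimes P^d_\D$.

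Next I would use such a resolution $P_\bullet\to B$ to compute $\Ext^*_{k[\C\times\D]}(B,C)$ as the cohomology of the complex $\Hom_{k[\C\times\D]}(P_\bullet,C)$. The key point is the Yoneda-type computation
\[
\Hom_{k[\C\times\D]}(P^c_\C\boxtimes P^d_\D,C)\cong C(c,d),
\]
which follows from \eqref{eq-Yoneda} applied twice (first fix one variable, apply Yoneda on the other). More precisely, under hypothesis (ii), I would split the resolution variable-by-variable: fixing $y\in\D$, the complex $P_\bullet(-,y)\to B(-,y)$ is a complex of projectives in $k[\C]\Md$, and the hypothesis $\Ext^*_{k[\C]}(B(-,y),C(-,y'))=0$ for all $y,y'$ says that applying $\Hom_{k[\C]}(-,C(-,y'))$ to $B(-,y)$ gives no higher cohomology. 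The cleanest way to package this is a double-complex / hyper-Ext argument: one shows that $R\Hom_{k[\C\times\D]}(B,C)$ can be computed as $R\Hom_{k[\D]}\big(\text{something},\,R\Hom_{k[\C]}(B(-,\ast),C(-,\ast))\big)$, so that the vanishing of the inner $\Ext$ forces the whole thing to vanish. Hypothesis (i) is handled symmetrically by exchanging the roles of $\C$ and $\D$.

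The main obstacle, and the step requiring the most care, is making the ``variable-by-variable'' bookkeeping rigorous: one must justify that the Ext in the product category genuinely decomposes via a Grothendieck-type spectral sequence (or a suitable adjunction between derived functors), rather than merely for the external tensor products $B\boxtimes C$ where everything is formal. The honest route is to take a projective resolution $P_\bullet\to B$ by sums of $P^c_\C\boxtimes P^d_\D$ and observe that for each fixed $d$, $P_\bullet(-,d)$ need not be a resolution of $B(-,d)$ — so instead I would fix a projective resolution in one variable at a time and build a Cartan--Eilenberg-style resolution, or, more efficiently, use that $\Hom_{k[\C\times\D]}(P^c_\C\boxtimes P^d_\D,C)\cong\Hom_{k[\C]}(P^c_\C,\Hom_{k[\D]}(P^d_\D,C(-,-)))$ is computing an inner Hom, and that the derived functor of this inner Hom has the postulated vanishing. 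Once this spectral sequence (with $E_2$-term built from $\Ext_{k[\C]}$ of $\Ext_{k[\D]}$, or vice versa) is in place, the conclusion is immediate since one of the two pages is identically zero under either hypothesis.
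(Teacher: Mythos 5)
Your high-level strategy — identify standard projectives of $k[\C\times\D]\Md$ with external tensor products $P^c_\C\boxtimes P^d_\D$, use the Yoneda identification, and then run a spectral-sequence argument separating the two variables — is the same strategy the paper uses, and the Yoneda computation you write is correct. But the proof is not closed: the spectral sequence whose existence your argument rests on is not constructed, and the form you guess for it is not quite right. The inner term $E^q_\C(y,y'):=\Ext^q_{k[\C]}(B(-,y),C(-,y'))$ is a \emph{bifunctor} on $\D^\op\times\D$, not a functor on $\D$, so there is no ``$\Ext_{k[\D]}$ of $\Ext_{k[\C]}$''. What one actually needs is the coend/end identity $\Hom_{k[\C\times\D]}(B,C)\simeq\Hom_{k[\D^\op\times\D]}(k[\D],E^0_\C)$ (more generally $\Hom_{k[\D^\op\times\D]}(D,E^0_\C)\simeq\Hom_{k[\C\times\D]}(B\otimes_{k[\D^\op]}D,C)$), and then the Cartan--Eilenberg two-spectral-sequence device of \cite[XVI.4]{CE}: one spectral sequence has $\mathrm{I}^{p,q}=\Ext^p_{k[\D^\op\times\D]}(k[\D],E^q_\C)$, and the second, with $E_2$-term involving $\Tor^{k[\D^\op]}_q(B(x,-),k[\D](-,y))$, collapses to the abutment $\Ext^*_{k[\C\times\D]}(B,C)$ because $k[\D](-,y)$ is projective in $k[\D^\op]\Md$. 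Hypothesis (ii) kills the first page of $\mathrm{I}$. Your sketch never produces either spectral sequence, and the Grothendieck-composite route is not automatic either: one would still have to check that the inner Hom functor $C\mapsto E^0_\C$ sends injectives to $\Hom_{k[\D^\op\times\D]}(k[\D],-)$-acyclics.

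One smaller point: your worry that ``$P_\bullet(-,d)$ need not be a resolution of $B(-,d)$'' is misplaced. Evaluation at $d$ in the second variable is exact, and $P^{(c,d')}(-,d)=k[\C(c,-)]\otimes k[\D(d',d)]$ is a direct sum of copies of $P^c_\C$, so $P_\bullet(-,d)\to B(-,d)$ is in fact a projective resolution in $k[\C]\Md$. The genuine obstacle is elsewhere: knowing $P_\bullet(-,d)$ is a projective resolution for each $d$ does not by itself let you compare $\Hom_{k[\C\times\D]}(P_\bullet,C)$ with the complexes $\Hom_{k[\C]}(P_\bullet(-,d),C(-,d'))$, because the indexing objects $d_\alpha$ of the generators of $P_\bullet$ do not match up with the evaluation point $d$. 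This is precisely the bookkeeping that the end formula $\Hom_{k[\D^\op\times\D]}(k[\D],E^0_\C)$ and the Cartan--Eilenberg machinery handle for you.
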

\begin{proof}
We only prove that hypothesis (ii) implies that $\Ext^*_{k[\C\times\D]}(B,D)=0$, the proof that hypothesis (i) implies the cancellation being similar.

We write $E_\C^*(y,y')=\Ext^*_{k[\C]}(B(-,y),C(-,y'))$ for short. 
There is an isomorphism, natural with respect to $B$, $C$, and the object $D$ of $k[\D^\op\times\D]\Md$:
\[\Hom_{k[\D^\op\times \D]}(D,E^0_\C)\simeq \Hom_{k[\C\times\D]}(B\otimes_{k[\D^\op]}D,C)\;.\qquad(*)\]
This isomorphism is the functor analogue of \cite[IX.2 Prop 2.2]{CE}, and we may construct it as follows. Firstly, there is a natural isomorphism when $D$ is a standard projective. Indeed $D(x,y)=k[\D(x,c)]\otimes k[\D(d,y)]$ and the two sides are naturally isomorphic to $E_\C^0(c,d)$ by the Yoneda lemma. Now the isomorphism extends to every functor $D$ by taking a projective presentation of $D$ by standard projectives.

Using the isomorphism $(*)$ we construct two spectral sequences converging to the same abutment (the construction of these spectral sequences is exactly the same as the one given for categories of modules in \cite[XVI.4]{CE}):
\begin{align*}
&\mathrm{I}^{p,q}=\Ext^p_{k[\D^\op\times\D]}(D,E^q_\C)\Rightarrow H^{p+q}\;,\\
&\mathrm{II}^{p,q}= \Ext^p_{k[\C\times\D]}(T_q^\D,C)\Rightarrow H^{p+q}\;,
\end{align*}
where $T_q^\D:\C\times\D\to k\Md$ is defined by
$T_q^\D(x,y)=\Tor_q^{k[\D^{\op}]}(B(x,-),D(-,y))$.
If $D=k[\D]$ then for all $y$, $k[\D](-,y)$ is a projective object of $k[\D^\op]\Md$, hence the functor $\Tor^{k[\D^\op]}_q(B,k[\D])$ is zero for positive $q$, and the dual Yoneda isomorphism  \eqref{eq-Yoneda-dual} shows that for $q=0$ this functor is isomorphic to $B$. Thus the second spectral sequence collapses at the second page and $H^*=\Ext^*_{k[\C\times\D]}(B,C)$. Hypothesis (ii) is nothing but $E^*_\C=0$, hence the first spectral sequence gives the result.
\end{proof}

If $\C$ is additive, we have a pair of functors 
$$\Delta:\C\leftrightarrows \C^{\times n}:\Sigma$$
defined by $\Delta(x)=(x,\dots,x)$ and $\Sigma(x_1,\dots,x_n)=x_1\oplus\cdots\oplus x_n$. We denote by $\delta:x\to \Sigma\Delta(x)=x^{\oplus n}$ the diagonal morphism, and by $\sigma: \Sigma\Delta(x)=x^{\oplus n}\to x$ the sum morphism (i.e. the components of $\delta$ and $\sigma$ are equal to the identity of $x$). 

\begin{pr}\label{pr-sum-diagonal} Assume that $\C$ is additive. 
For all $F$ in $k[\C]\Md$ and for all $G$ in $k[\C^{\times n}]\Md$ the following compositions are isomorphisms:
\begin{align*}
&\Ext^*_{k[\C^{\times n}]}(\Sigma^*F,G)\xrightarrow[]{\res^\Delta}\Ext^*_{k[\C]}(\Delta^*\Sigma^*F,\Delta^*G)\xrightarrow[]{\Ext^*(F(\delta),\Delta^*G)} \Ext^*_{k[\C]}(F,\Delta^*G)\;,\\
&\Ext^*_{k[\C^{\times n}]}(G,\Sigma^*F)\xrightarrow[]{\res^\Delta}\Ext^*_{k[\C]}(\Delta^*G,\Delta^*\Sigma^*F)\xrightarrow[]{\Ext^*(\Delta^*G,F(\sigma))} \Ext^*_{k[\C]}(F,\Delta^*G)\;.
\end{align*}
\end{pr}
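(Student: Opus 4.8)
The plan is to establish the first chain of isomorphisms and then deduce the second by duality, via proposition \ref{pr-memechose}. So fix $F$ in $k[\C]\Md$ and $G$ in $k[\C^{\times n}]\Md$, and consider the composition
\[
\Ext^*_{k[\C^{\times n}]}(\Sigma^*F,G)\xrightarrow{\res^\Delta}\Ext^*_{k[\C]}(\Delta^*\Sigma^*F,\Delta^*G)\xrightarrow{\Ext^*(F(\delta),\Delta^*G)}\Ext^*_{k[\C]}(F,\Delta^*G)\, .
\]
The key structural fact is that $\Delta$ is left adjoint to $\Sigma$ in the $2$-category of additive categories: the unit is the diagonal morphism $\delta\colon x\to\Sigma\Delta(x)=x^{\oplus n}$ (natural in $x$) and the counit at $(x_1,\dots,x_n)$ is $\Delta\Sigma(x_1,\dots,x_n)=(\bigoplus x_i,\dots,\bigoplus x_i)\to(x_1,\dots,x_n)$ whose $j$-th component is the projection $\bigoplus_i x_i\to x_j$; the triangle identities are a direct check using the biproduct structure. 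This adjunction is then promoted to the functor categories: I would show that the restriction functor $\Sigma^*\colon k[\C]\Md\to k[\C^{\times n}]\Md$ is right adjoint to $\Delta^*$, with the counit of the new adjunction being, for $H$ in $k[\C]\Md$, the natural transformation $\Delta^*\Sigma^*H\to H$ given objectwise by $H(\delta)\colon H(x^{\oplus n})\to H(x)$ — this is exactly the map appearing in the theorem. Concretely one unwinds: for $H$ in $k[\C^{\times n}]\Md$ and $F$ in $k[\C]\Md$,
\[
\Hom_{k[\C^{\times n}]}(H,\Sigma^*F)\cong\Hom_{k[\C]}(\Delta^*H,F)\, ,
\]
naturally, with the map in one direction sending $\psi\colon H\to\Sigma^*F$ to the composite $\Delta^*H\xrightarrow{\Delta^*\psi}\Delta^*\Sigma^*F\xrightarrow{F(\delta)}F$. (Dually $\Sigma^*$ is left adjoint to $\Delta^*$ via $F(\sigma)$, which gives the second chain.)

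Granting the adjunction $\Delta^*\dashv\Sigma^*$, the first displayed composition is, up to the identification above, simply the adjunction isomorphism on $\Hom$, and the point is to derive it. Since $\Delta^*$ is exact and preserves projectives — a standard projective $P^c_\C$ is sent by $\Delta^*$... wait, more precisely one wants $\Sigma^*$ exact and sending injectives to injectives, or equivalently, since we work with $\Ext$, that $\Delta^*$ sends a projective resolution to a projective resolution. The cleanest route: $\Delta^*$ is exact and sends the standard projective $P^{(c_1,\dots,c_n)}_{\C^{\times n}}=k[\C^{\times n}((c_1,\dots,c_n),-)]=k[\prod_i\C(c_i,-)]$ to $k[\prod_i\C(c_i,-)]$ viewed on $\C$, which is a summand of... hmm, it is not a standard projective of $k[\C]$ in general. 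The correct observation is rather that $\Sigma^*\colon k[\C]\Md\to k[\C^{\times n}]\Md$ is exact and has an exact left adjoint $\Delta^*$, hence $\Sigma^*$ preserves injectives; then
\[
\Ext^*_{k[\C^{\times n}]}(H,\Sigma^*F)\cong\Ext^*_{k[\C]}(\Delta^*H,F)
\]
by taking an injective resolution $F\to I^\bullet$, noting $\Sigma^*F\to\Sigma^* I^\bullet$ is an injective resolution, and applying the adjunction isomorphism levelwise and passing to cohomology. Setting $H=\Sigma^*F$... no — I want the first slot to be $\Sigma^*F$, so I apply this with the roles arranged as $\Ext^*_{k[\C^{\times n}]}(\Sigma^*F,G)$; but $G$ is in the second slot, so instead I should use the other adjunction $\Delta^*\dashv\Sigma^*$ is not what is needed here. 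Let me restate: for the first chain I need $\Ext^*_{k[\C^{\times n}]}(\Sigma^*F,G)\cong\Ext^*_{k[\C]}(F,\Delta^*G)$, which is the derived form of the adjunction $\Sigma^*$ \emph{left} adjoint to $\Delta^*$ — and indeed $\Sigma$ is right adjoint to $\Delta$ at the level of $\C$, so $\Sigma^*\dashv\Delta^*$ wait that's backwards too. Restriction reverses: $\Delta\dashv\Sigma$ on categories gives $\Sigma^*\dashv\Delta^*$? Composition is contravariant in a suitable sense... I would simply verify directly which $\Hom$-adjunction holds by Yoneda and then derive it; the unit/counit will involve $F(\delta)$ or $F(\sigma)$ accordingly, matching the two chains in the statement.

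I would carry out the steps in this order: (1) record the biadjunction $\Delta\dashv\Sigma$ on additive categories with explicit unit $\delta$ and counit given by projections, checking the triangle identities; (2) deduce the two $\Hom$-level adjunction isomorphisms for the restriction functors on functor categories, identifying the unit/counit maps with the natural transformations $F(\delta)$ and $F(\sigma)$ of the statement, so that each displayed composition is identified with an adjunction isomorphism in degree $0$; (3) observe $\Delta^*$ and $\Sigma^*$ are both exact (they are restrictions) and each has an exact adjoint, hence each preserves injectives and projectives as needed; (4) derive the $\Hom$-adjunctions to $\Ext$-adjunctions by resolving the appropriate variable, obtaining the result, then transport to $\Tor$ via proposition \ref{pr-memechose} and duality, or equally directly by the dual Yoneda argument. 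The main obstacle — really the only subtle point — is step (3) together with being careful about variance: one must make sure the functor being resolved lands in the slot where the adjoint functor is exact, so that the levelwise adjunction isomorphism between complexes computes the derived functor on both sides; but since all four functors $\Delta^*,\Sigma^*$ and their adjoints are exact, this causes no real trouble, and the identification of the connecting maps as maps of $\delta$-functors follows from naturality of the adjunction in the resolved variable. Everything else is bookkeeping with biproducts and the Yoneda lemma.
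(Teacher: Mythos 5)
You take essentially the same route as the paper: the two displayed compositions are the degree-$0$ adjunction isomorphisms for $\Sigma^*\dashv\Delta^*$ (with unit $F(\delta)$) and $\Delta^*\dashv\Sigma^*$ (with counit $F(\sigma)$), which come from the biadjunction $\Delta\dashv\Sigma\dashv\Delta$ on $\C$, and since all four functors involved are exact the adjunctions derive to $\Ext$-isomorphisms; the paper compresses exactly this derivation step into the citation of \cite[Lm 1.3, Lm 1.5]{Pira-Pan}. One caution for the write-up: early on you describe $H(\delta)$ as a map $\Delta^*\Sigma^*H\to H$, but since $\delta\colon x\to x^{\oplus n}$ and $H$ is covariant, $H(\delta)$ goes $H\to\Delta^*\Sigma^*H$ and is the \emph{unit} of $\Sigma^*\dashv\Delta^*$, not a counit — you catch the variance issue later, but fix it at the outset so the reader sees immediately that $\Delta\dashv\Sigma$ gives $\Sigma^*\dashv\Delta^*$ with unit $F(\delta)$, and $\Sigma\dashv\Delta$ gives $\Delta^*\dashv\Sigma^*$ with counit $F(\sigma)$.
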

\begin{proof}
The pair $(\Delta,\Sigma)$ is a pair of adjoint functors, with adjunction unit $\delta$ and the pair $(\Sigma,\Delta)$ is a pair of adjoint functors with adjunction counit $\sigma$. Thus, the result follows from \cite[Lm 1.5]{Pira-Pan}, with the explicit expression of the isomorphisms provided by \cite[Lm 1.3]{Pira-Pan}.
\end{proof}

Given functors $F$ in $k[\C]\Md$ and $G$ in $k[\D]\Md$, we denote by $F\boxtimes G$ the object of $k[\C\times\D]\Md$ defined by $(F\boxtimes G)(x,y)=F(x)\otimes G(y)$.
Tensor products yield a K\"unneth morphism:
\begin{align}\Ext^*_{k[\C]}(F,H)\otimes\Ext^*_{k[\D]}(G,K)\to \Ext^*_{k[\C\times\D]}(F\boxtimes G,H\boxtimes K)\;.\label{eq-Kunneth}\end{align}
A functor is called \emph{of type $\mathrm{fp}_\infty$} if it has a projective resolution by finite sums of standard projectives. 

The following property is well-known; we give its short proof for the convenience of the reader.
\begin{pr}\label{pr-Kunneth}
Assume that $k$ is a field and that $F$ and $G$ are of type $\mathrm{fp}_\infty$. Then the K\"unneth morphism \eqref{eq-Kunneth} is a graded isomorphism.
\end{pr}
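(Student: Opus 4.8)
The plan is to build an explicit small projective resolution of $F\boxtimes G$ out of resolutions of $F$ and $G$, and then to reduce the statement to the classical Künneth theorem for complexes of $k$-vector spaces, applied twice.

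First I would use the hypothesis: since $F$ and $G$ are of type $\mathrm{fp}_\infty$, choose projective resolutions $P_\bullet\to F$ in $k[\C]\Md$ and $Q_\bullet\to G$ in $k[\D]\Md$ in which each $P_p$ and each $Q_q$ is a \emph{finite} direct sum of standard projectives. The key elementary observation is that the external tensor product of two standard projectives is again a standard projective: $P^c_\C\boxtimes P^d_\D\simeq P^{(c,d)}_{\C\times\D}$, because $k[\C(c,-)]\otimes k[\D(d,-)]\simeq k[(\C\times\D)((c,d),-)]$. Hence the total complex $\Tot(P_\bullet\boxtimes Q_\bullet)$ (of the bicomplex with $(p,q)$-term $P_p\boxtimes Q_q$) is a non-negatively graded complex whose terms are finite direct sums of standard projectives of $k[\C\times\D]\Md$. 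Moreover it is a resolution of $F\boxtimes G$: evaluated at an object $(x,y)$ it is the total complex of $P_\bullet(x)\otimes_k Q_\bullet(y)$, and since $k$ is a field the Künneth theorem gives $H_n\big(\Tot(P_\bullet(x)\otimes Q_\bullet(y))\big)\simeq\bigoplus_{i+j=n}H_i(P_\bullet(x))\otimes H_j(Q_\bullet(y))$, which equals $F(x)\otimes G(y)$ for $n=0$ and $0$ otherwise.

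Next I would compute $\Ext^*_{k[\C\times\D]}(F\boxtimes G,H\boxtimes K)$ as the cohomology of $\Hom_{k[\C\times\D]}\big(\Tot(P_\bullet\boxtimes Q_\bullet),H\boxtimes K\big)$. Using the Yoneda isomorphism \eqref{eq-Yoneda} together with the fact that the $P_p$ and $Q_q$ are \emph{finite} sums of standard projectives (so that $\Hom$ out of them turns the relevant products into finite direct sums, which commute with $\otimes_k$), one obtains a natural isomorphism of cochain bicomplexes
$$\Hom_{k[\C\times\D]}(P_\bullet\boxtimes Q_\bullet,H\boxtimes K)\simeq \Hom_{k[\C]}(P_\bullet,H)\otimes_k\Hom_{k[\D]}(Q_\bullet,K)\;.$$
Passing to total complexes and applying the Künneth theorem over the field $k$ once more yields
$$\Ext^n_{k[\C\times\D]}(F\boxtimes G,H\boxtimes K)\simeq\bigoplus_{i+j=n}\Ext^i_{k[\C]}(F,H)\otimes_k\Ext^j_{k[\D]}(G,K)\;,$$
since $\Hom_{k[\C]}(P_\bullet,H)$ computes $\Ext^*_{k[\C]}(F,H)$ and $\Hom_{k[\D]}(Q_\bullet,K)$ computes $\Ext^*_{k[\D]}(G,K)$; no $\Tor$-terms appear precisely because $k$ is a field.

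Finally, it remains to identify the isomorphism just produced with the Künneth morphism \eqref{eq-Kunneth}. This is a chain-level diagram chase: the morphism \eqref{eq-Kunneth} is, by construction, induced by the external product of cocycles, and under the identification above this is exactly the map realizing the algebraic Künneth morphism for the two cochain complexes $\Hom_{k[\C]}(P_\bullet,H)$ and $\Hom_{k[\D]}(Q_\bullet,K)$. I expect this last verification — keeping track of the Yoneda identifications and of the signs in the total complex — to be the only delicate point; all of the homological content is carried by the two applications of the classical Künneth theorem and by the stability of standard projectives under $\boxtimes$.
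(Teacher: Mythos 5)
Your proof is correct and follows essentially the same route as the paper's: identify $P^c\boxtimes P^d$ with the standard projective $P^{(c,d)}$, resolve $F$ and $G$ by finite sums of standard projectives, observe that the external tensor product of these resolutions is a projective resolution of $F\boxtimes G$, and reduce to the classical Künneth theorem over the field $k$. You spell out more explicitly than the paper does the two auxiliary uses of the Künneth theorem (to check acyclicity of $\Tot(P_\bullet\boxtimes Q_\bullet)$ and to compute the cohomology of $\Hom(P_\bullet,H)\otimes\Hom(Q_\bullet,K)$), and you correctly flag the role of finiteness in commuting $\Hom$ with $\otimes_k$; these are details the paper leaves implicit but your treatment is the same in substance.
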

\begin{proof}
If $F=P^x$ and $G=P^y$ are standard projectives, then $F\boxtimes G\simeq P^{(x,y)}$ is a standard projective, and it follows from the Yoneda isomorphism that the K\"unneth morphism is an isomorphism in this case. By additivity of $\Ext$ and tensor products, it follows that the K\"unneth morphism is an isomorphism if $F$ and $G$ are finite direct sums of standard projectives. Assume now that $P\to F$ and $Q\to G$ are two resolutions by finite direct sums of projectives. Then the K\"unneth morphism yields an isomorphism between the complex $\Hom_{k[\C]}(P,H)\otimes\Hom_{k[\D]}(Q,K)$ and the complex $\Hom_{k[\C\times\D]}(P\boxtimes Q,H\boxtimes K)$. Since $P\boxtimes Q$ is a projective resolution of $F\boxtimes G$, this implies the result.
\end{proof}
\begin{rk}\label{rk-pfinfty}
Being of type $\mathrm{fp}_\infty$ is a rather strong property, which is usually hard to check in an elementary way on a given functor. We refer the reader to \cite{DTSchwartz,DTTunisian} for practical conditions ensuring that functors are of type $\mathrm{fp}_\infty$.
\end{rk}

\section{Excision in functor homology}\label{sec-3}

The main purpose of this section is to prove theorem \ref{thm-intro-2} from the introduction. In fact, we will derive theorem \ref{thm-intro-2} from a general `excision theorem', which is a functor homology analogue of the excision theorem of Suslin and Wodzicki in $K$-theory \cite{Suslin-Wodz}, see remark \ref{rk-excis} for further explanations relative to this analogy. We finish the section by a quick computational application which is relevant for antipolynomial functors.
\subsection{The excision theorem}
\begin{defi-prop}\label{cor-memechose2}
Let $e$ be a positive integer or $+\infty$. A restriction functor $\phi^*:k[\D]\Md\to k[\C]\Md$ is called \emph{$e$-excisive} if it satisfies one of the following equivalent assertions.
\begin{enumerate}
\item[(1)] For all functors $F,G$, the map 
$$\res^\phi:\Ext^i_{k[\D]}(F,G)\to \Ext^i_{k[\C]}(\phi^*F,\phi^*G)$$
is an isomorphism if $0\le i< e$.
\item[(2)] For all functors $F',G$, the map 
$$\res_\phi:\Tor_i^{k[\C]}(\phi^*F',\phi^*G)\to \Tor_i^{k[\D]}(F',G)$$
 is an isomorphism if $0\le i< e$.
\item[(3)] The restriction functor $\phi^*:k[\D]\Md\to k[\C]\Md$ is fully faithful and for all objects $x$, $y$ of $\D$:
\[\bigoplus_{0<i< e}\Tor^{k[\C]}_i(\phi^*P^x_{\D^\op},\phi^*P^y_\D)=0\;.\] 
\end{enumerate}
\end{defi-prop}
\begin{proof}
Let us prove (1)$\Leftrightarrow$(2).
The map $\res^\phi$ is a morphism of $\delta$-functors, and every functor $G$ embeds into a product of standard injectives. Hence a d\'ecalage argument shows that assertion (1) is equivalent to the following assertion:
\begin{enumerate}
\item[(1')] For all standard injectives $G$ and for all functors $F$, $\res^\phi$ is an isomorphism in degrees $0\le *< e$.
\end{enumerate} 
Finally assertion (1') is equivalent to assertion (2) as a consequence of proposition \ref{pr-memechose} and the fact that a $k$-linear map $f$ is an isomorphism if and only if the $k$-linear map $\Hom_k(f,M)$ is an isomorphism for all injective $k$-modules $M$.

We now prove (2)$\Leftrightarrow$(3).
If (2) holds, then $\phi^*$ is fully faithful by (1), and moreover  
$\Tor^{k[\C]}_i(\phi^*P^x_{\D^\op},\phi^*P^y_{\D})$ is isomorphic to $\Tor^{k[\D]}_i(P^x_{\D^\op},P^y_{\D})$ for all $0<i< e$, hence it is zero by projectivity of $P^x_{\D^\op}$, which proves (3). Conversely, if $\phi^*$ is fully faithful then (1) holds for $e=1$, hence $\res_\phi$ is an isomorphism in degree $0$ by (2). If in addition the $\Tor$-vanishing is satisfied, then $\res_\phi$ is an isomorphism in degrees $0\le *< e$ for all projective functors $F'$ and $G$, hence for all functors $F'$ and $G$ by a d\'ecalage argument, which proves (2).
\end{proof}

\begin{ex}
If a functor $\phi:\C\to \D$ is full and essentially surjective, then the restriction functor  $\phi^*:k[\D]\Md\to k[\C]\Md$ is fully faithful, hence $1$-excisive.
\end{ex}

\begin{rk}
A standard $\delta$-functor argument shows that if $\phi^*$ is $e$-excisive for some positive integer $i$, then we automatically have in addition that in degree $i=e$, the map $\res^\phi$ is injective and the map $\res_\phi$ is surjective, for all functors $F'$, $F$ and $G$.
\end{rk}

The next theorem is the main result of the section, and we call it the excision theorem (see remark \ref{rk-excis} for an explanation of this terminology). It gives an equivalent condition to being $e$-excisive. 
The key point is that being $e$-excisive is a homological property of the category $k[\A]\Md$ of \emph{all} functors from $\A$ to $k$-modules, whereas the equivalent condition given in theorem \ref{thm-magique-general} deals with the homological properties of the category $\A\Md=\mathbb{Z}\otimes_\mathbb{Z}\A\Md$ of the \emph{additive} functors from $\A$ to abelian groups.
 
\begin{thm}[excision]\label{thm-magique-general}Let $k$ be a commutative ring and let $\phi:\A\to \B$ be an additive functor between two small additive categories, such that the restriction functor $\phi^*:k[\B]\Md\to k[\A]\Md$ is fully faithful.  
For all positive integers $e$, the following assertions are equivalent.
\begin{enumerate}[(1)]
\item\label{item-1-ex} The restriction functor $\phi^*:k[\B]\Md\to k[\A]\Md$ is $e$-excisive. 
\item\label{item-2-ex} For all objects $x$ and $y$ of $\B$, the torsion groups
$T_\bullet= \mathrm{Tor}_\bullet^{\A}(\phi^*h^x_{\B^\op},\phi^*h^y_{\B})$ (calculated in the category of additive functors from $\A$ to abelian groups)
satisfy 
\[
k \otimes_{\mathbb{Z}}T_i =0 =\mathrm{Tor}_1^{\mathbb{Z}}(k,T_{j})=0\;,
\text{ for $0<i<e$ and $0<j<e-1$.}\]
\end{enumerate}
\end{thm}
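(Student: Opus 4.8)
The plan is to prove the equivalence $(\ref{item-1-ex}) \Leftrightarrow (\ref{item-2-ex})$ by reducing the $e$-excisiveness of $\phi^*$ to a statement about the standard projectives $P^x_{\B}$, then comparing the relevant $\Tor$-groups in $k[\A]\Md$ with $\Tor$-groups of additive functors via a base-change spectral sequence. By criterion (3) of Proposition-Definition \ref{cor-memechose2}, since $\phi^*$ is already assumed fully faithful, $e$-excisiveness is equivalent to the vanishing of $\Tor^{k[\A]}_i(\phi^*P^x_{\B^\op},\phi^*P^y_\B)$ for all $0<i<e$ and all objects $x,y$ of $\B$. So the whole theorem comes down to identifying these groups in terms of the additive $\Tor$-groups $T_\bullet = \Tor_\bullet^{\A}(\phi^*h^x_{\B^\op},\phi^*h^y_\B)$.

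The first key step is to observe that $\phi^*P^y_\B = \phi^*k[\B(y,-)] = k[\B(\phi(y),\phi(-))] = k[\A(y,-)/\I]$ where $\I$ is the ideal of morphisms of $\A$ killed by $\phi$; and since $\phi$ is full, $\B(\phi(y),\phi(-)) = \A(y,-)/\I$, which is an abelian group, so $\phi^*P^y_\B \cong k[\phi^*h^y_{\B}]_{\mathrm{set}}$ — more precisely $\phi^*P^y_\B = k[\underline{\B}(y,-)]$ where $\underline{\B}(y,-) = \phi^*h^y_\B$ is the additive functor $\A\to \mathbf{Ab}$. Thus $\phi^*P^y_\B$ is the $k$-linearization (as a set-valued functor, then free $k$-module) of an additive functor. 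The next step is then to relate $\Tor^{k[\A]}_*$ of such linearizations to $\Tor^{\A}_*$ of the underlying additive functors. This is precisely where the simplicial machinery of the appendix enters: one resolves $k[\underline{\B}(y,-)]$ by a simplicial object built from the bar-type / Eilenberg–MacLane construction, so that computing $\Tor^{k[\A]}$ with standard projectives (via the dual Yoneda isomorphism \eqref{eq-Yoneda-dual}) reduces to evaluating the relevant functors on a cofibrant/projective replacement. Concretely, I expect a spectral sequence of the form
\[
E^2_{p,q} = \big(\text{something built from } \Tor^{\A}_q(\phi^*h^x_{\B^\op},\phi^*h^y_\B)\big) \Longrightarrow \Tor^{k[\A]}_{p+q}(\phi^*P^x_{\B^\op},\phi^*P^y_\B),
\]
where the $p$-direction records the homology of the (reduced) simplicial/Eilenberg–MacLane complex computing the stable homology of the relevant abelian groups with $k$-coefficients. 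The point of the appendix's algebraic-topology computations is to identify the low-degree terms of this $E^2$-page: in total degree $i$, the contributions come from $k\otimes_{\mathbb{Z}} T_i$ (the $q=i$, $p=0$ corner) and from $\Tor_1^{\mathbb{Z}}(k, T_{i-1})$ (a $q=i-1$, $p=1$ correction term arising from the first $k$-homology of an Eilenberg–MacLane-type space), all higher "topological" contributions being expressible in terms of $T_j$ with $j < i$. Running this for $0<i<e$ and doing the bookkeeping shows that the vanishing in (\ref{item-2-ex}) — namely $k\otimes_{\mathbb{Z}}T_i = 0$ for $0<i<e$ and $\Tor_1^{\mathbb{Z}}(k,T_j)=0$ for $0<j<e-1$ — is exactly what is needed, by induction on $i$, to force $\Tor^{k[\A]}_i(\phi^*P^x_{\B^\op},\phi^*P^y_\B)=0$ for $0<i<e$; and conversely, since $T_\bullet$ is the lowest piece of $\phi^*P^\bullet$ in a suitable sense, the vanishing of the left-hand side forces the vanishing in (\ref{item-2-ex}).

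The main obstacle will be the careful construction and identification of the low-degree terms of this spectral sequence — in particular, showing rigorously that the "topological" inputs in total degree $i$ collapse to just $k\otimes_{\mathbb{Z}}T_i$ and $\Tor_1^{\mathbb{Z}}(k, T_{i-1})$, with no further interference, and with the correct index shift ($e-1$ rather than $e$ for the $\Tor_1^{\mathbb{Z}}$ condition). This is why the paper isolates these computations in the appendix: they amount to knowing enough about the integral homology of Eilenberg–MacLane spaces (or of the relevant bar constructions) in a range of degrees, and about how $k$-linearization interacts with this homology, to control exactly the first two columns of the spectral sequence. The direction $(\ref{item-1-ex})\Rightarrow(\ref{item-2-ex})$ should be easier: it uses the same spectral sequence but now reads off, from the vanishing of the abutment in degrees $<e$ together with an inductive/minimality argument, that the generating terms $k\otimes_{\mathbb{Z}}T_i$ and $\Tor_1^{\mathbb{Z}}(k,T_j)$ must themselves vanish in the stated ranges. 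The fully faithfulness hypothesis guarantees the $i=0$ base case ($T_0 = \B(x,y)$ and $k\otimes_{\mathbb{Z}}T_0$ is already matched), which anchors the induction in both directions.
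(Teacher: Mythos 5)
You correctly reduce to the vanishing of $\Tor^{k[\A]}_i(\phi^*P^x_{\B^\op},\phi^*P^y_\B)$ via criterion (3) of Proposition-Definition \ref{cor-memechose2}, and you correctly identify $\phi^*P^y_\B$ as the $k$-linearization $k[\phi^*h^y_\B]$ of an additive functor. You also sense correctly that Hurewicz-type computations for Eilenberg–MacLane constructions with $k$-coefficients are what ultimately convert the hypotheses on $T_\bullet$ into the required vanishing. So the high-level geography of your plan matches the paper's.

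But there is a genuine gap in the middle step, precisely where you invoke an unspecified spectral sequence with the announcement ``I expect a spectral sequence of the form\dots''. The real issue is that you never explain how to pass from $\Tor^{k[\A]}$ (non-additive functors) to the additive $\Tor^{\A}$; this is not a formal base-change and it requires a concrete new ingredient. The paper's argument hinges on Lemma \ref{lm-kcrochet-tens}: for additive $A$ and $B$ one has a natural isomorphism $k[A]\otimes_{k[\A]}k[B]\simeq k[A\otimes_{\mathbb{Z}[\A]}B]$. Combined with the classical relative Hurewicz theorem (Proposition \ref{pr-Hur-classical}), this shows that if $Q\to\phi^*h^x_{\B^\op}$ is a simplicial projective resolution in $\Mdd\A$, then $k[Q]\to\phi^*P^x_{\B^\op}$ is a simplicial \emph{flat} resolution in $\Mdd k[\A]$, and hence one gets an actual isomorphism of graded $k$-modules $\Tor^{k[\A]}_\bullet(\phi^*P^x_{\B^\op},\phi^*P^y_\B)\simeq\pi_\bullet k[X]$ with $X=Q\otimes_{\mathbb{Z}[\A]}\phi^*h^y_\B$ and $\pi_\bullet X=T_\bullet$. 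There is no spectral sequence to control here: Lemma \ref{lm-decomp} splits $X$ as a \emph{product} of Eilenberg–MacLane objects, so the $k$-local Hurewicz theorem (Proposition \ref{pr-Hur} / Corollary \ref{cor-Hur}) computes $\pi_\bullet k[X]$ in the relevant range outright from $T_\bullet$, giving both directions of the equivalence at once. Your ``all higher topological contributions are expressible in terms of $T_j$ with $j<i$'' and the index shift $e-1$ are exactly what Lemma \ref{lm-EML-vanish} establishes; asserting them without the linearization-compatibility of Lemma \ref{lm-kcrochet-tens} and without the flatness argument leaves the comparison between the two Tor theories unproved, and the induction you describe cannot get off the ground.
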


The remainder of the section is devoted to the proof of theorem \ref{thm-magique-general}. This proof depends on the use of simplicial techniques, and in particular on variants of the Hurewicz theorem. For the convenience of the reader, the simplicial notions and results that we need are recalled in the appendix (section \ref{sec-app}). The first step of the proof is the following general lemma, which is of independent interest. It is well-known to experts, but we do not know any written reference for it.

\begin{lm}\label{lm-kcrochet-tens}
Let $A:\A^\mathrm{op}\to \mathbb{Z}\Md$ and $B:\A\to \mathbb{Z}\Md$ be two additive functors, and let $k[A]$ and $k[B]$ denote the composition of these functors with the $k$-linearization functor $k[-]$. There is an isomorphism of $k$-modules, natural with respect to $A$ and $B$:
$$k[A]\otimes_{k[\A]} k[B]\simeq k[A\otimes_{\mathbb{Z}[\A]} B]\;.$$
\end{lm}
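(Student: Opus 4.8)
The plan is to reduce the statement to the case of standard projective generators, where both sides can be computed explicitly via the Yoneda isomorphisms, and then bootstrap to general $A$ and $B$ by resolving and using exactness/additivity properties of $k$-linearization.

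First I would recall the coend description of the left-hand side: $k[A]\otimes_{k[\A]}k[B]$ is the coequalizer of the two obvious maps $\bigoplus_{a\to a'} k[A(a')]\otimes_k k[B(a)] \rightrightarrows \bigoplus_a k[A(a)]\otimes_k k[B(a)]$. The crucial elementary observation is that for sets $X$ and $Y$ there is a natural isomorphism $k[X]\otimes_k k[Y]\simeq k[X\times Y]$, so each term rewrites with $k[\,-\,]$ pulled outside; and since $k[-]$ is a left adjoint (to the forgetful functor from $k$-modules to sets), it preserves colimits, in particular coequalizers and direct sums. Applying $k[-]$ to the coend diagram computing $A\otimes_{\mathbb{Z}[\A]}B$ in $\mathbb{Z}\Md$ — which is the coequalizer of $\bigoplus_{a\to a'}A(a')\otimes_{\mathbb{Z}}B(a)\rightrightarrows\bigoplus_a A(a)\otimes_{\mathbb{Z}}B(a)$, i.e. $\bigoplus_{a\to a'}A(a')\times B(a)$-ish after noting $\mathbb{Z}[X]\otimes_{\mathbb{Z}}\mathbb{Z}[Y]=\mathbb{Z}[X\times Y]$ — should match term by term. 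So the strategy is: write both coends out, use $k[X\times Y]\simeq k[X]\otimes_k k[Y]$ to identify the terms, and use that $k[-]$ commutes with the relevant colimit to conclude the two coequalizers agree. One subtlety to handle carefully: $A$ and $B$ take values in abelian groups, not sets, so "$k[A]$" means post-composing the set-valued functor underlying $A$ with $k[-]$; the map $A(a)\otimes_{\mathbb{Z}}B(a)\to (A\otimes_{\mathbb{Z}[\A]}B)$ is $\mathbb{Z}$-bilinear, and I need the corresponding map of free $k$-modules $k[A(a)]\otimes_k k[B(a)]\to k[A\otimes_{\mathbb{Z}[\A]}B]$ to be the one induced by the set map $A(a)\times B(a)\to A\otimes_{\mathbb{Z}[\A]}B$, which it is by construction. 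Naturality in $A$ and $B$ is then automatic since every isomorphism used is natural.

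Alternatively, and perhaps more cleanly, I would phrase it via adjunction: $k[-]:\mathbf{Set}\to k\Md$ is strong monoidal (for the cartesian structure on $\mathbf{Set}$ and $\otimes_k$ on $k\Md$), hence induces on functor categories a functor that is compatible with the relevant Day-convolution-type tensor products; but since the paper works concretely with coends I expect the explicit coequalizer argument above is what is wanted. The main obstacle — really the only place care is needed — is checking that the identification of terms is compatible with the two parallel arrows in the coequalizer diagrams, i.e. that the square
\[
\begin{tikzcd}
k[A(a')]\otimes_k k[B(a)]\ar{r}\ar{d}{\simeq} & k[A(a)]\otimes_k k[B(a)]\ar{d}{\simeq}\\
k[A(a')\times B(a)]\ar{r} & k[A(a)\times B(a)]
\end{tikzcd}
\]
commutes for each of the two maps (one induced by the morphism $a\to a'$ acting on $A$, the other by it acting on $B$); this follows since $k[-]$ is a functor and the horizontal maps on the bottom are the $k$-linearizations of the evident set maps. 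Once that is in place, passing to coequalizers and invoking cocontinuity of $k[-]$ finishes the argument, and the naturality in $A$ and $B$ is inherited from the naturality of all the isomorphisms $k[X\times Y]\simeq k[X]\otimes_k k[Y]$ and of the coend constructions.
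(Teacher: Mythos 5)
Your argument correctly identifies $k[A]\otimes_{k[\A]}k[B]$ with $k[Z]$, where $Z$ is the coequalizer in $\mathbf{Set}$ of
$\coprod_{f:a\to a'}A(a')\times B(a)\rightrightarrows\coprod_a A(a)\times B(a)$
(using that $k[-]:\mathbf{Set}\to k\Md$ is strong monoidal and cocontinuous). But the step that would conclude $k[Z]\simeq k[A\otimes_{\mathbb{Z}[\A]}B]$ is not a consequence of cocontinuity: $A\otimes_{\mathbb{Z}[\A]}B$ is a cokernel computed in $\mathbb{Z}\Md$, and the forgetful functor $\mathbb{Z}\Md\to\mathbf{Set}$ does \emph{not} preserve coequalizers, so $k[-]$ does not carry the $\mathbb{Z}\Md$-coequalizer diagram to a $k\Md$-coequalizer diagram. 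Your own ``-ish'' in rewriting $\bigoplus_{a\to a'}A(a')\otimes_{\mathbb{Z}}B(a)$ as ``$\bigoplus_{a\to a'}A(a')\times B(a)$-ish'' flags the real problem: $A(a')\otimes_{\mathbb{Z}}B(a)$ is a tensor product of abelian groups, not $\mathbb{Z}[X]\otimes_{\mathbb{Z}}\mathbb{Z}[Y]$ for any sets $X,Y$, so the terms of the two coequalizer diagrams do not match (compare $\mathbb{Z}/2\otimes_{\mathbb{Z}}\mathbb{Z}/2$, which has two elements, with $\mathbb{Z}/2\times\mathbb{Z}/2$, which has four). The Day-convolution reformulation has the same flaw: $k[-]$ is strong monoidal over $(\mathbf{Set},\times)$, whereas $A\otimes_{\mathbb{Z}[\A]}B$ is a convolution over $(\mathbb{Z}\Md,\otimes_{\mathbb{Z}})$.

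A telltale sign of the gap is that your argument never uses the additivity of $A$ and $B$; if it were correct it would prove the isomorphism for arbitrary functors, which is false. The paper's proof uses additivity essentially. The map $\Theta_{A,B}$ (which agrees with the map $k[Z]\to k[A\otimes_{\mathbb{Z}[\A]}B]$ you implicitly construct) is first checked to be an isomorphism when $B=h^c$ is a standard additive projective, by comparing the two Yoneda isomorphisms; the additive Yoneda isomorphism $A\otimes_{\mathbb{Z}[\A]}h^c\simeq A(c)$ is precisely where additivity of $A$ is used. One then passes to arbitrary direct sums of $h^c$'s via filtered colimits of monomorphisms, and finally to general additive $B$ by resolving $B$ simplicially by such direct sums and invoking the Hurewicz theorem --- the latter being the right tool because $k[-]$ is far from exact on $\mathbb{Z}\Md$, so one cannot simply pass to homology termwise. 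Your opening sentence sketches essentially this plan (reduce to standard projectives via Yoneda, then bootstrap by resolving); that plan is the right one, and you should carry it through instead of pivoting to the cocontinuity argument.
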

\begin{proof}
We first recall concrete formulas for tensor products.
For all commutative rings $K$, the tensor product $F\otimes_{K[\A]}G$ can be concretely computed as the quotient of the direct sum $\bigoplus_{x} F(x)\otimes_K G(x)$ indexed by a set of representatives of the isomorphism classes of objects of $\A$, modulo the relations $F(f)(s)\otimes t = s\otimes G(f)(t)$ for all morphisms $f:x\to y$ and for all elements $s\in F(y)$ and $t\in G(x)$. We will denote by $\llbracket s\otimes t\rrbracket\in F\otimes_{K[\A]}G$ the class of an element $s\otimes t\in F(x)\otimes_K G(x)$. When $G=P^c$ is a standard projective there is a `Yoneda isomorphism' 
$$\Upsilon:F\otimes_{K[\A]}P^c\simeq F(c)$$ given by sending the class $\llbracket s\otimes f\rrbracket $ with $s\in F(x)$ and $f\in\A(c,x)$ to $F(f)(s)$ (the inverse isomorphism sends $u\in F(c)$ to $\llbracket u\otimes \id_c\rrbracket$). Similarly, if $F$ is additive and $G=h^c$ is a standard additive projective, there is an `additive Yoneda isomorphism' 
$$\Upsilon_\mathrm{add}:F\otimes_{K[\A]}h^c\simeq F(c)\;.$$ 
When $k=\mathbb{Z}$, the isomorphism $\Upsilon_{\mathrm{add}}$ sends class $\llbracket s\otimes f\rrbracket $ with $s\in F(x)$ and $f\in\A(c,x)$ to $F(f)(s)$. 

We are now ready to construct the isomorphism of lemma \ref{lm-kcrochet-tens}. 
For all objects $x$ of $\A$, we let $\theta_{A,B,x}:k[A(x)]\otimes k[B(x)]\to k[A\otimes_{\mathbb{Z}[\A]}B]$ be the $k$-linear map
such that $\theta_{A,B,x}(s\otimes t)=\llbracket s\otimes t\rrbracket$ for all $s$ in $A(x)$ and all $t\in B(x)$. The maps $\theta_{A,B,x}$ induce a $k$-linear map, natural in $A$ and $B$:
\[\Theta_{A,B}:k[A]\otimes_{k[\A]}k[B]\to k[A\otimes_{\mathbb{Z}[\A]}B]\;.\]

If $B=h^c$ is a standard projective additive, the composition $k[\Upsilon_{\mathrm{add}}]\circ \Theta_{A,B}$ is equal to $\Upsilon$, hence $\Theta_{A,B}$ is an isomorphism in this case.
Finite direct sums of standard projective additives are isomorphic to standard projective additives, hence $\Theta_{A,B}$ is also an isomorphism if $B$ is a finite direct sum of standard additive projectives. Now the source and the target of $\Theta_{A,B}$, viewed as functors of $B$ preserve filtered colimits of monomorphisms, which implies in turn that $\Theta_{A,B}$ is an isomorphism if $B$ is an arbitrary direct sum of standard projective additives.

Now let $B$ be arbitrary and let $\epsilon:P\to B$ be a projective simplicial resolution of $B$ in $\A\Md$ by direct sums of standard projective additives. Then we have a commutative square of simplicial $k$-modules in which the top row is an isomorphism, the bottom row features constant simplicial $k$-modules and the vertical arrows are induced by $\epsilon$:
\[
\begin{tikzcd}
k[A]\otimes_{k[\A]}k[P]\ar{d}\ar{r}{\Theta_{A,P}}[swap]{\simeq}& k[A\otimes_{\mathbb{Z}[\A]}P]\ar{d}\\
k[A]\otimes_{k[\A]}k[B]\ar{r}{\Theta_{A,B}}& k[A\otimes_{\mathbb{Z}[\A]}B]
\end{tikzcd}
\;.\]
By right exactness of tensor products and by the classical Hurewicz theorem (see proposition \ref{pr-Hur-classical}) the vertical morphisms are isomorphisms in $\pi_0$. Hence $\Theta_{A,B}$ is an isomorphism.
\end{proof}

\begin{proof}[Proof of theorem \ref{thm-magique-general}]
Let $Q$ be a simplicial resolution by direct sums of standard projectives in $\Mdd\A$ of the additive functor $\phi^*h^x_{\B^\op}=\B(\phi(-),x)$. By definition of $\Tor^\A_\bullet$, the abelian group $T_i$ is the $i$-th homotopy group of the simplicial abelian group $X:=Q\otimes_{\mathbb{Z}[\A]}\phi^*h^y_\B$:
$$T_\bullet=\pi_\bullet X\;.$$

We are now going to give an interpretation of the homotopy groups of $k[X]$.  
Lemma \ref{lm-kcrochet-tens} yields an isomorphism of simplicial $k$-modules:
\[k[X]\simeq k[Q]\otimes_{k[\A]}k[\phi^*h_\B^y]\;. \]
We first observe that $k[\phi^*h_\B^y]=\phi^*P^y_\B$. Next, we claim that $k[Q]$ is a flat resolution of $k[\phi^*h^x_{\B^\op}]=\phi^*P^x_{\B^\op}$. Indeed, it follows from the Hurewicz theorem \ref{pr-Hur-classical} that $k[Q]$ is a simplicial resolution of $k[\phi^*h^x_{\B^\op}]=\phi^*P^x_{\B^\op}$. Moreover, if $A=h^c$ is a standard projective in $\Mdd\A$, then $k[A]=P^c$ is a standard projective in $\Mdd k[\A]$. More generally if $A$ is an arbitrary direct sum of standard projectives in $\Mdd\A$, then $A$ is the filtered colimit of its standard projective subfunctors, and since the $k$-linearization functor $k[-]$ preserves filtered colimits of monomorphisms, we obtain that $k[A]$ is a filtered limit of projectives, hence that $k[A]$ is flat. Therefore the simplicial resolution $k[Q]$ is degreewise flat, hence we have a graded isomorphism:
\[ \Tor^{k[\A]}_\bullet(\phi^*P^x_{\B^\op},\phi^*P^y_\B)= \pi_\bullet k[X]\;.\]

To finish the proof, we observe that the $k$-local Hurewicz theorem (see corollary \ref{cor-Hur}) shows that assertion (2) of the theorem is equivalent to the vanishing of $\Tor^{k[\A]}_i(\phi^*P^x_{\B^\op},\phi^*P^y_\B)$ for $0<i<e$ and for all objects $x$ and $y$ of $\B$, and the latter is equivalent to assertion (1) by proposition \ref{cor-memechose2}.
\end{proof}

\subsection{Two special cases of the excision theorem}
We now investigate some concrete situations in which condition \eqref{item-2-ex} of theorem \ref{thm-magique-general} is satisfied.
As a first example, condition \eqref{item-2-ex} is always satisfied if the category $\B$ is such that $\B(x,x)\otimes_{\mathbb{Z}} k=0$ for all $x$, hence in this case theorem \ref{thm-magique-general} takes the following form. 

\begin{thm}\label{thm-magique-csq1}
Let $\phi:\A\to \B$ be an additive quotient. Assume that for all $x$, $\B(x,x)\otimes_\mathbb{Z}k=0$. Then the restriction functor $\phi^*:k[\B]\Md\to k[\A]\Md$
is $\infty$-excisive.
\end{thm}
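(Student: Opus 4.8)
The plan is to read this off from the excision theorem \ref{thm-magique-general}. Since $\phi$ is an additive quotient, the example following Proposition \ref{cor-memechose2} shows that $\phi^*$ is fully faithful, so Theorem \ref{thm-magique-general} is available for every finite $e$; and as $\phi^*$ is $\infty$-excisive precisely when it is $e$-excisive for all finite $e$, it suffices to verify condition \eqref{item-2-ex} for all $e$. Concretely, setting $T_\bullet=\Tor_\bullet^{\A}(\phi^*h^x_{\B^{\op}},\phi^*h^y_{\B})$ for objects $x,y$ of $\B$, one must show $k\otimes_{\mathbb{Z}}T_q=0$ and $\Tor_1^{\mathbb{Z}}(k,T_q)=0$ for every $q>0$. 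I would actually prove the stronger statement that $\Tor_p^{\mathbb{Z}}(k,T_q)=0$ for all $p,q\geq 0$.

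The first ingredient is that the hypothesis propagates from $\End_{\B}$ to all Hom-groups: for any objects $y,z$ of $\B$, $\Tor_*^{\mathbb{Z}}(k,\B(y,z))=0$ in every degree. Indeed $\B(y,z)$ is a unital left module over the ring $R=\End_{\B}(z)$ via post-composition, so $\Tor_*^{\mathbb{Z}}(k,\B(y,z))$ inherits commuting actions of $R$ (functorial in the second variable, with $\id_R$ acting as the identity) and of $k$ (the usual $k$-module structure on $\Tor$ over $\mathbb{Z}$); it is thus a unital module over $R\otimes_{\mathbb{Z}}k=\B(z,z)\otimes_{\mathbb{Z}}k=0$, hence zero.

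For the second ingredient, I would pick a resolution $Q_\bullet\to\phi^*h^x_{\B^{\op}}$ by direct sums of standard projectives in $\Mdd\A$, so that $T_\bullet=\pi_\bullet X_\bullet$ for $X_\bullet=Q_\bullet\otimes_{\mathbb{Z}[\A]}\phi^*h^y_{\B}$. The additive Yoneda isomorphism identifies $h^c_{\A^{\op}}\otimes_{\mathbb{Z}[\A]}\phi^*h^y_{\B}$ with $(\phi^*h^y_{\B})(c)=\B(y,\phi(c))$, so each $X_n$ is a direct sum of groups of the form $\B(y,\phi(c))$ and therefore satisfies $\Tor_*^{\mathbb{Z}}(k,X_n)=0$ in all degrees by the first ingredient. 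Hence the first-quadrant double complex computing $k\otimes_{\mathbb{Z}}^{\Lbf}X_\bullet$ has exact columns, so $k\otimes_{\mathbb{Z}}^{\Lbf}X_\bullet$ is acyclic. Feeding this into the universal coefficients spectral sequence $E^2_{p,q}=\Tor_p^{\mathbb{Z}}(k,T_q)\Rightarrow H_{p+q}(k\otimes_{\mathbb{Z}}^{\Lbf}X_\bullet)=0$, and using that $\mathbb{Z}$ is hereditary (so only the columns $p=0,1$ survive and the sequence degenerates at $E^2$), I obtain $\Tor_p^{\mathbb{Z}}(k,T_q)=0$ for all $p,q$. This is more than condition \eqref{item-2-ex} asks for, so Theorem \ref{thm-magique-general} yields the assertion. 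The step needing the most care is the propagation lemma of the second paragraph — specifically that all the higher groups $\Tor_*^{\mathbb{Z}}(k,\B(y,z))$ vanish, not merely $\B(y,z)\otimes_{\mathbb{Z}}k$ — since it is this derived vanishing, rather than its degree-zero shadow, that drives the spectral sequence argument.
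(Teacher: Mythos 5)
Your proof is correct and invokes Theorem \ref{thm-magique-general} via the same resolution $Q\to\phi^*h^x_{\B^{\op}}$ and complex $X_\bullet=Q_\bullet\otimes_{\mathbb{Z}[\A]}\phi^*h^y_{\B}$ as the paper, but the way you verify condition \eqref{item-2-ex} is genuinely different. The paper splits into cases according to whether $\operatorname{char}k$ is zero or not, defines in each case an explicit full subcategory $\C\subset\mathbb{Z}\Md$ (groups on which $\operatorname{char}k$ acts invertibly, respectively torsion groups with orders invertible in $k$) that is closed under kernels, cokernels and direct sums and is contained in the class $\{A:\,k\otimes_{\mathbb{Z}}A=0=\Tor_1^{\mathbb{Z}}(k,A)\}$, and then uses the arithmetic Lemma \ref{exoann} about characteristics of rings $R,S$ with $R\otimes_{\mathbb{Z}}S=0$ to show $\B(x,y)\in\C$, whence the homology of the complex $X_\bullet$ also lies in $\C$. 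You instead prove at once, without any case analysis, the stronger and cleaner statement that $\Tor_{\ast}^{\mathbb{Z}}(k,\B(y,z))=0$ in all degrees, by the observation that this graded group carries commuting unital actions of $k$ (first $\Tor$ variable) and of $\End_{\B}(z)$ (second variable), making it a unital module over $\B(z,z)\otimes_{\mathbb{Z}}k=0$; you then transport this vanishing from the chain groups $X_n$ to the homology $T_q$ by the universal coefficient spectral sequence (or equivalently, by the observation that $\{A:\Tor^{\mathbb{Z}}_{\ast}(k,A)=0\}$ is a Serre subcategory). What your route buys is avoiding both the $\operatorname{char}k$ case split and the explicit construction of $\C$; what the paper's route buys is staying entirely elementary, with no spectral sequence, only the closure properties of an explicit abelian subcategory. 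Both proofs ultimately hinge on the same arithmetic annihilation $k\otimes_{\mathbb{Z}}\B(z,z)=0$, propagated from endomorphism rings to all Hom-groups, and both terminate by appealing to Theorem \ref{thm-magique-general}.
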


\begin{proof}
Let $\C$ denote the following full subcategory of abelian groups:
\begin{itemize}
\item if $\operatorname{char} k\ne 0$, the objects of $\C$ are the groups on which multiplication by $\operatorname{char} k$ is invertible;
\item if $\operatorname{char} k=0$, the objects of $\C$ are the torsion groups whose elements have orders invertible in $k$.
\end{itemize}
This subcategory is stable under kernels, cokernels and direct sums. Moreover, for all $A\in \C$ we have $\Tor^\mathbb{Z}_1(A,k)=0=A\otimes_\mathbb{Z} k$.

The hypothesis on $\B$ implies that $\B(x,y)\in\C$ for all $(x,y)$, thanks to lemma~\ref{exoann} below (applied to the rings $k$ and $\B(y,y)$, using that $\B(x,y)$ is a $\B(y,y)$-module). Thus, for all standard projectives $h^a_{\A^\op}$ in $\Mdd\A$, the abelian group $h^a_{\A^\op}\otimes_\A \phi^*h^y_\B=\B(y,\phi(a))$ belongs to $\C$. Therefore, if $Q$ is a resolution of $\phi^*h^x_{\B^\op}$ in $\Mdd\A$ by direct sums of standard projectives, the complex $Q\otimes_{k[\A]}\phi^*h^y_\B$ calculating $\Tor^{\A}_\bullet(\phi^*h^x_{\B^\op}, \phi^*h^y_\B)$ is a complex in $\C$, hence its homology groups are in $\C$. Thus the second assertion of theorem \ref{thm-magique-general} is satisfied for all $e$, whence the result.
\end{proof}

\begin{lm}\label{exoann} Let $R$ and $S$ be rings such that $R\otimes_\mathbb{Z}S=0$. Let us denote $r:=\operatorname{char} R$ and $s:=\operatorname{char} S$. Then $(r,s)\ne (0,0)$. Moreover, if $r\ne 0$, then $r$ belongs to $S^\times$.
\end{lm}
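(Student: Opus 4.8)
The plan is to start from the observation that $R\otimes_{\mathbb{Z}}S$ is a unital ring with unit $1_R\otimes 1_S$, so the hypothesis $R\otimes_{\mathbb{Z}}S=0$ amounts to the single relation $1_R\otimes 1_S=0$. The main device is base change: for any ring homomorphism $\mathbb{Z}\to T$ with $T$ commutative one has a natural isomorphism $T\otimes_{\mathbb{Z}}(R\otimes_{\mathbb{Z}}S)\simeq(T\otimes_{\mathbb{Z}}R)\otimes_T(T\otimes_{\mathbb{Z}}S)$; hence if $R\otimes_{\mathbb{Z}}S=0$ and $T$ is a field, then --- the tensor product over a field of two nonzero vector spaces being nonzero --- at least one of $T\otimes_{\mathbb{Z}}R$ and $T\otimes_{\mathbb{Z}}S$ must vanish. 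I will feed this principle with $T=\mathbb{Q}$ for the first assertion and with $T=\mathbb{F}_p$ for the second.

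For $(r,s)\neq(0,0)$, I argue by contradiction: assume $r=s=0$. Then $1_R$ has infinite additive order, so $\mathbb{Z}\cdot 1_R\simeq\mathbb{Z}$ as a subgroup of $R$, and flatness of $\mathbb{Q}$ over $\mathbb{Z}$ yields an injection $\mathbb{Q}=\mathbb{Q}\otimes_{\mathbb{Z}}(\mathbb{Z}\cdot 1_R)\hookrightarrow\mathbb{Q}\otimes_{\mathbb{Z}}R$; in particular $\mathbb{Q}\otimes_{\mathbb{Z}}R\neq 0$, and symmetrically $\mathbb{Q}\otimes_{\mathbb{Z}}S\neq 0$. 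Taking $T=\mathbb{Q}$ in the base-change principle contradicts $R\otimes_{\mathbb{Z}}S=0$, so $r$ and $s$ are not both zero.

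For the second assertion, suppose $r\neq 0$. Then $r$ kills $R$, so $R$ is a module over $\mathbb{Z}/r$, and the $\mathbb{Z}/r$-submodule $\mathbb{Z}\cdot 1_R$ is free of rank one, that is $\mathbb{Z}\cdot 1_R\simeq\mathbb{Z}/r$ (because $1_R$ has additive order exactly $r$). Now $\mathbb{Z}/r$ is self-injective (a finite product of the quasi-Frobenius rings $\mathbb{Z}/p^a$), so $\mathbb{Z}\cdot 1_R$ is an injective $\mathbb{Z}/r$-module, hence a direct summand of $R$: write $R\simeq(\mathbb{Z}\cdot 1_R)\oplus R'$ as abelian groups. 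Applying $-\otimes_{\mathbb{Z}}S$ exhibits $S/rS=(\mathbb{Z}/r)\otimes_{\mathbb{Z}}S$ as a direct summand of $R\otimes_{\mathbb{Z}}S=0$, whence $S=rS$. Since $r\cdot 1_S$ is central in $S$ and $S\cdot(r\cdot 1_S)=rS=S$, there is $u\in S$ with $u\cdot(r\cdot 1_S)=1_S$, and by centrality also $(r\cdot 1_S)\cdot u=1_S$; thus $r\cdot 1_S\in S^\times$, i.e. $r\in S^\times$.

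The one point that needs care is that $R$ and $S$ are allowed to be noncommutative and infinite, so one cannot split off $\mathbb{Z}\cdot 1_R$ by invoking the structure theorem for finitely generated abelian groups; the substitute is the self-injectivity of $\mathbb{Z}/r$. If one prefers to avoid that input, the same conclusion follows more hands-on: decompose the bounded torsion group $R$ into its $p$-primary components $R_{(p)}$ --- which are two-sided ideals because $n(xy)=(nx)y$ for $n\in\mathbb{Z}$ --- and note that for each prime $p\mid r$ the component $R_{(p)}$ is nonzero and bounded by a power of $p$, so $pR_{(p)}\neq R_{(p)}$ and hence $R/pR\neq 0$; the base-change principle with $T=\mathbb{F}_p$ then forces $S/pS=0$ for every $p\mid r$, and multiplying over the prime factors of $r$ again gives $r\cdot 1_S\in S^\times$.
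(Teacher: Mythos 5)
Your proof is correct and follows essentially the same route as the paper: show at least one of $R,S$ is torsion to get $(r,s)\neq(0,0)$, split $\mathbb{Z}\cdot 1_R\simeq\mathbb{Z}/r$ off the additive group of $R$, tensor with $S$ to get $S/rS=0$, and use centrality of $r\cdot 1_S$ to conclude $r\in S^\times$. The paper asserts the direct-summand step without justification; you rightly flag that this is the one nontrivial point for general (noncommutative, not finitely generated) $R$ and supply the needed input — self-injectivity of $\mathbb{Z}/r$ — together with a more elementary alternative via $p$-primary components and $\mathbb{F}_p$-base change, either of which legitimately closes the gap the paper leaves implicit.
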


\begin{proof} If a tensor product of abelian groups is zero, at least one of them is torsion, whence $(r,s)\ne (0,0)$. If $r\ne 0$, then $\mathbb{Z}/r$ is a direct summand of the additive group of $R$, whence $\mathbb{Z}/r\otimes_\mathbb{Z}S=0$, which implies $r\in S^\times$.
\end{proof}

Under some favorable assumptions on $\A$ and $\B$, assertion \eqref{item-2-ex} of theorem \ref{thm-magique-general} can also be reformulated in terms of excision for \textit{additive} functors. We first transpose definition \ref{cor-memechose2} to the context of additive functors (the proof is the same as the proof of proposition \ref{cor-memechose2} so we don't repeat it).  
\begin{defi-prop}
Let $e$ be a positive integer or $+\infty$. Let $\phi:\A\to \B$ be an additive functor between two essentially small additive categories. The restriction functor $\phi^*:k\otimes_{\mathbb{Z}}\B\Md\to k\otimes_{\mathbb{Z}}\A\Md$ is called \emph{$e$-excisive} if it satisfies one of the following equivalent assertions.
\begin{enumerate}
\item[(1)] For all functors $F,G$, the map 
$$\res^\phi:\Ext^i_{k\otimes_{\mathbb{Z}}\B}(F,G)\to \Ext^i_{k\otimes_{\mathbb{Z}}\A}(\phi^*F,\phi^*G)$$
is an isomorphism if $0\le i< e$.
\item[(2)] For all functors $F',G$, the map 
$$\res_\phi:\Tor_i^{k\otimes_{\mathbb{Z}}\A}(\phi^*F',\phi^*G)\to \Tor_i^{k\otimes_{\mathbb{Z}}\B}(F',G)$$
 is an isomorphism if $0\le i< e$.
\item[(3)] The restriction functor $\phi^*:k\otimes_{\mathbb{Z}}\B\Md\to k\otimes_{\mathbb{Z}}\A\Md$ is fully faithful and for all objects $x$, $y$ of $\B$:
\[\bigoplus_{0<i< e}\Tor^{k\otimes_{\mathbb{Z}}\A}_i(\phi^*h^x_{\B^\op},\phi^*h^y_\B)=0\;.\] 
\end{enumerate}
\end{defi-prop}

\begin{defi}
Let $k$ be a commutative ring. We say that an additive category $\C$ is \emph{$k$-torsion-free} if $\mathrm{Tor}^{\mathbb{Z}}_1(k,\C(x,y))=0$ for all objects $x$, $y$ of $\C$. 
\end{defi}

\begin{thm}\label{thm-magique2} Let $\phi:\A\to \B$ be an additive quotient.  Assume that $\A$ and $\B$ are both $k$-torsion free. Then for all positive integers $e$, the following assertions are equivalent.
\begin{enumerate}[(1)]
\item The functor $\phi^*:k[\B]\Md\to k[\A]\Md$ is $e$-excisive.
\item The functor $\phi^*:k\otimes_{\mathbb{Z}}\B\Md\to k\otimes_{\mathbb{Z}}\A\Md$ is $e$-excisive.
\end{enumerate}
\end{thm}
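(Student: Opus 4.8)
The plan is to reduce everything to the $\Tor$-vanishing condition (3) in the two Proposition-Definitions, so that we only need to compare $\Tor_i^{k[\A]}(\phi^*P^x_{\B^\op},\phi^*P^y_\B)$ with $\Tor_i^{k\otimes_\mathbb{Z}\A}(\phi^*h^x_{\B^\op},\phi^*h^y_\B)$. First I would note that since $\phi$ is an additive quotient, both restriction functors $\phi^*$ (on all functors, and on additive functors) are fully faithful, so in both equivalences the only content is the vanishing of the relevant $\Tor$ groups in degrees $0<i<e$. Thus the theorem amounts to showing: for $\A$ and $\B$ both $k$-torsion-free, $\Tor_i^{k[\A]}(\phi^*P^x_{\B^\op},\phi^*P^y_\B)=0$ for all $0<i<e$ and all $x,y$ if and only if $\Tor_i^{k\otimes_\mathbb{Z}\A}(\phi^*h^x_{\B^\op},\phi^*h^y_\B)=0$ for all $0<i<e$ and all $x,y$.

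Next I would bring in the key computation from the proof of theorem \ref{thm-magique-general}. There it was shown, using lemma \ref{lm-kcrochet-tens} and the Hurewicz theorem, that if $Q$ is a resolution of $\phi^*h^x_{\B^\op}$ in $\Mdd\A$ by direct sums of standard additive projectives, then $k[Q]$ is a flat resolution of $\phi^*P^x_{\B^\op}$ in $\Mdd k[\A]$ and
$$\Tor^{k[\A]}_\bullet(\phi^*P^x_{\B^\op},\phi^*P^y_\B)=\pi_\bullet\, k\bigl[Q\otimes_{\mathbb{Z}[\A]}\phi^*h^y_\B\bigr]\;.$$
Writing $T_\bullet:=\Tor_\bullet^{\A}(\phi^*h^x_{\B^\op},\phi^*h^y_\B)=\pi_\bullet(Q\otimes_{\mathbb{Z}[\A]}\phi^*h^y_\B)$, the $k$-local Hurewicz theorem (corollary \ref{cor-Hur}) identifies the vanishing of $\pi_i\,k[Q\otimes_{\mathbb{Z}[\A]}\phi^*h^y_\B]$ for $0<i<e$ with the conditions $k\otimes_\mathbb{Z}T_i=0$ for $0<i<e$ and $\Tor_1^\mathbb{Z}(k,T_j)=0$ for $0<j<e-1$. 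So condition (1) of the present theorem is equivalent to: for all $x,y$, $k\otimes_\mathbb{Z}T_i=0$ for $0<i<e$ and $\Tor_1^\mathbb{Z}(k,T_j)=0$ for $0<j<e-1$.

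On the other hand, since $\A$ is $k$-torsion-free, each standard additive projective $h^a_{\A^\op}$ satisfies $\Tor_1^\mathbb{Z}(k,h^a_{\A^\op}(b))=\Tor_1^\mathbb{Z}(k,\A(b,a)\otimes\text{-free stuff})=0$, so $k\otimes_\mathbb{Z}-$ is exact on the complex $Q$; hence $k[\A]$-valued $\Tor$ in the additive setting is computed by $k\otimes_\mathbb{Z}(Q\otimes_{\mathbb{Z}[\A]}\phi^*h^y_\B)$, and by the universal coefficient theorem over $\mathbb{Z}$ we get short exact sequences
$$0\to k\otimes_\mathbb{Z}T_i\to \Tor_i^{k\otimes_\mathbb{Z}\A}(\phi^*h^x_{\B^\op},\phi^*h^y_\B)\to \Tor_1^\mathbb{Z}(k,T_{i-1})\to 0\;.$$
(Here I use that $\phi^*h^y_\B$ is $k$-torsion-free as a $\B$-module composed with $\phi$, which follows from $\B$ being $k$-torsion-free, so that no extra Tor terms intervene when passing from $\mathbb{Z}$-coefficients to the $k\otimes_\mathbb{Z}\A$ computation.) Therefore condition (2) — vanishing of $\Tor_i^{k\otimes_\mathbb{Z}\A}(\phi^*h^x_{\B^\op},\phi^*h^y_\B)$ for $0<i<e$ — is equivalent to $k\otimes_\mathbb{Z}T_i=0$ for $0<i<e$ together with $\Tor_1^\mathbb{Z}(k,T_{i-1})=0$ for $0<i<e$, i.e. for $0<i-1<e-1$, which is exactly the same pair of conditions as above. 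Comparing the two descriptions finishes the proof.

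The main obstacle I expect is bookkeeping around the universal coefficient / spectral sequence step: one must be careful that the $k$-torsion-freeness of both $\A$ and $\B$ is genuinely enough to kill all the correction terms, in particular that $\phi^*h^x_{\B^\op}$ evaluated on objects of $\A$ is $\mathrm{Tor}^\mathbb{Z}_1(k,-)$-acyclic (this uses $\B$ $k$-torsion-free, not $\A$), while the resolution $Q$ is built from $\A$-projectives which are $k$-torsion-free because $\A$ is. Once the two UCT short exact sequences are set up correctly, matching the index ranges $0<i<e$ and $0<j<e-1$ on both sides is straightforward, and the fully faithfulness of $\phi^*$ takes care of degree $0$ and of the equivalence with assertions (1) in the Proposition-Definitions.
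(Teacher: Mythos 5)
Your proposal follows essentially the same route as the paper: take a resolution $Q$ of $\phi^*h^x_{\B^\op}$ in $\Mdd\A$ by direct sums of standard additive projectives, use the $k$-torsion-freeness of $\A$ and $\B$ (via the universal coefficient theorem) to see that $k\otimes_{\mathbb{Z}}Q$ is a projective resolution in the $k$-linear additive setting, apply the UCT again to the complex $Q\otimes_{\mathbb{Z}[\A]}\phi^*h^y_\B$, whose terms are sums of groups $\B(y,\phi(a))$ and hence $\Tor_1^{\mathbb{Z}}(k,-)$-acyclic, and match the outcome with assertion (2) of theorem \ref{thm-magique-general}; this is exactly the paper's argument, with the only cosmetic difference that you re-run the Hurewicz computation for condition (1) where the paper simply cites theorem \ref{thm-magique-general}.

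One point is silently dropped in your index bookkeeping. The UCT exact sequences make condition (2) equivalent to the vanishing of $k\otimes_{\mathbb{Z}}T_i$ for $0<i<e$ together with $\Tor_1^{\mathbb{Z}}(k,T_{i-1})$ for $0<i<e$, i.e.\ for $0\le i-1< e-1$; rewriting this last range as $0<i-1<e-1$ discards the case $i=1$, namely the condition $\Tor_1^{\mathbb{Z}}(k,T_0)=0$, which does not appear in assertion (2) of theorem \ref{thm-magique-general} and is not taken care of by fully faithfulness of $\phi^*$ alone. It has to be checked that this condition is automatic: since $\phi$ is full and essentially surjective one computes $T_0\simeq \B(\phi(-),x)\otimes_{\mathbb{Z}[\A]}\B(y,\phi(-))\simeq\B(y,x)$, and this group is $\Tor_1^{\mathbb{Z}}(k,-)$-acyclic because $\B$ is $k$-torsion-free; this is exactly the step the paper makes explicit. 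With that one line added, your proof coincides with the paper's.
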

\begin{proof}
Let $Q^x$ be a resolution of the functor $\B(\phi(-),x)$ in $\Mdd\A$ by direct sums of standard projectives. Then the homology of the complex 
$$C^{x,y}:=Q^x\otimes_{\mathbb{Z}[\A]}\B(y,\phi(-))$$
computes $\Tor_\bullet^{\A}(\B(\phi(-),x),\B(y,\phi(-)))$. 
We claim that the complex $k\otimes_\mathbb{Z}Q^x$ is a resolution of $k\otimes_\mathbb{Z}\B(\phi(-),x)$ in $\Mdd k\otimes_{\mathbb{Z}}\A$. Indeed, $\Tor^\mathbb{Z}_1(k,-)$ vanishes on the objects of $Q$ since $\A$ is $k$-torsion-free, and also on $\pi_0Q$ because $\B$ is $k$-torsion-free. Thus the claim follows from the universal coefficient theorem \cite[XII Thm 12.1]{MLHom}. Moreover, $k\otimes_\mathbb{Z}Q^x$ is a direct sum of standard projectives in each degree. Therefore, the complex
$$k\otimes_{\mathbb{Z}}C^{x,y} \simeq (k\otimes_{\mathbb{Z}}Q^x)\otimes_{k[\A]}(k\otimes_{\mathbb{Z}}\B(y,\phi(-))$$
computes $\Tor_\bullet^{k\otimes_{\mathbb{Z}}\A}(k\otimes_{\mathbb{Z}}\B(\phi(-),x),k\otimes_{\mathbb{Z}}\B(y,\phi(-))$. 

Now, $\Tor^\mathbb{Z}_1(k,-)$ vanishes on the tensor product $h^a\otimes_{k[\A]}A\simeq \B(y,\phi(a))$
for all standard projectives $h^a=\A(-,a)$ of $\Mdd \A$ (once again because $\B$ is $k$-torsion-free), hence on the objects of the complex $C^{x,y}$. As a consequence, the universal coefficient theorem \cite[XII Thm 12.1]{MLHom} yields 
short exact sequences
$$0\to k\otimes_\mathbb{Z}H_i(C^{x,y})\to H_i(k\otimes_\mathbb{Z}C^{x,y})\to \mathrm{Tor}^{\mathbb{Z}}_1(k,H_{i-1}(C^{x,y}))\to 0$$
for all integers $i$. 
Therefore assertion (2) is equivalent to the vanishing of $H_i(k\otimes_\mathbb{Z}C^{x,y})$ for $0<i<e$ and all $x$ and $y$, which is equivalent to the vanishing of $k\otimes_\mathbb{Z}H_i(C^{x,y})$ and $\mathrm{Tor}^{\mathbb{Z}}_1(k,H_{i-1}(C^{x,y}))$ for $0<i<e$ and all $x$ and $y$. 

Since $\phi$ is full and essentially surjective, $\phi^*: \mathbb{Z}[\B]\Md\to \mathbb{Z}[A]\Md$ is $1$-excisive, hence 
$$H_0(C^{x,y})=\B(\phi(-),x)\otimes_{\mathbb{Z}[\A]}\B(y,\phi(-))\simeq \B(-,x)\otimes_{\mathbb{Z}[\B]}\B(y,-)\simeq \B(y,x)$$
and since $\B$ is assumed to be $k$-torsion-free we always have $\mathrm{Tor}^{\mathbb{Z}}_1(k,H_{i-1}(C^{x,y}))=0$. As a consequence, assertion (2) is equivalent to the vanishing of $k\otimes_\mathbb{Z}H_i(C^{x,y})$ and $\mathrm{Tor}^{\mathbb{Z}}_1(k,H_{j}(C^{x,y}))$ for $0<i<e$ and $0<j<e-1$. The latter is equivalent to assertion (1) by theorem \ref{thm-magique-general}.
\end{proof}

\begin{rk}\label{rk-excis}
Theorem \ref{thm-magique2} is a functor homology analogue of Suslin-Wodzicki's excision theorem in rational algebraic $K$-theory \cite{Suslin-Wodz} (see also \cite{Suslin-Kexcision} for the non-rational case). Indeed, the second assertion in theorem \ref{thm-magique2} is a natural generalization of the `$H$-unital' condition which governs excision in $K$-theory.

To be more specific, if $I$ is a two-sided ideal of a ring $R$, and if we consider $\A=\Proj_R$, $\B=\Proj_{R/I}$ and $\phi=-\otimes_R R/I$, then the second assertion of theorem \ref{thm-magique2} is easily seen to be equivalent to 
$$\text{(3)}\quad 
\mathrm{Tor}^{R\otimes_{\mathbb{Z}} k}_i((R/I)\otimes_{\mathbb{Z}} k,(R/I)\otimes_\mathbb{Z} k)=0 \text{ for $0<i<e$.}
$$ 
(To prove the equivalence, use proposition \ref{cor-memechose2} and the fact that $k\otimes_{\mathbb{Z}}\A\Md$ is equivalent to $R\otimes_\mathbb{Z}k\Md$ by the Eilenberg-Watts theorem.)
In the situation considered in \cite{Suslin-Wodz}, that is, if  
$R=\mathbb{Z}\oplus I$ where $I$ is a ring without unit (seen as an ideal in the unital ring $R$ constructed by adding formally a unit to $I$) and $k=\mathbb{Q}$, the $\Tor$ appearing in assertion (3) can be computed with a bar complex, hence assertion (3) is equivalent to $R$ being $H$-unital. 
\end{rk}
\subsection{An application to a vanishing result}

\begin{pr}\label{pr-vanish}
Let $k$ be a field of characteristic zero, and let $F,F',G$ be three functors from $\A$ to $k\Md$, with $F$ contravariant. Assume that there is a finite semi-simple ring $R$ such that $\Proj_R$ is an additive quotient of $\A$, and such that $F,F',G$ factor through $\Proj_R$. Then for all $i>0$ we have:
\begin{align*}
&\Ext^i_{k[\A]}(F',G)=0\;, &&  \Tor_i^{k[\A]}(F,G)=0\;.
\end{align*}
\end{pr}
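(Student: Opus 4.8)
The strategy is to invoke the excision theorem \ref{thm-magique-csq1} to transfer the computation from $k[\A]\Md$ to $k[\Proj_R]\Md$, and then to cite Kuhn's vanishing results. First, I would check that the hypotheses of theorem \ref{thm-magique-csq1} are met for the additive quotient $\phi:\A\to\Proj_R$. Since $R$ is a finite ring, $\Proj_R(x,x)=\End_R(x)$ is a finite abelian group for every finitely generated (projective) $R$-module $x$; as $k$ is a field of characteristic zero, multiplication by any nonzero integer is invertible in $k$, so $\Proj_R(x,x)\otimes_\mathbb{Z}k=0$. Hence theorem \ref{thm-magique-csq1} applies and the restriction functor $\phi^*$ is $\infty$-excisive, giving graded isomorphisms
\begin{align*}
&\Ext^*_{k[\A]}(F',G)\simeq\Ext^*_{k[\Proj_R]}(F'_0,G_0)\;,\\
&\Tor_*^{k[\A]}(F,G)\simeq\Tor_*^{k[\Proj_R]}(F_0,G_0)\;,
\end{align*}
where $F_0,F'_0,G_0$ are functors on $\Proj_R$ with $\phi^*F_0=F$, etc. (these exist by the assumption that $F,F',G$ factor through $\Proj_R$, and are unique up to isomorphism because $\phi^*$ is fully faithful).

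It then remains to prove the vanishing of $\Ext^i_{k[\Proj_R]}$ and $\Tor_i^{k[\Proj_R]}$ for $i>0$, for an arbitrary finite semi-simple ring $R$ and $k$ of characteristic zero. Here I would appeal to the results of Kuhn \cite{Ku-adv}: when $R$ is a finite semi-simple ring and $k$ is a field whose characteristic does not divide $|R|$ (in particular characteristic zero), the category $k[\Proj_R]\Md$ is semi-simple, equivalently all higher $\Ext$ (and hence all higher $\Tor$) between its objects vanish. Concretely, Kuhn's work identifies $k[\Proj_R]\Md$ with a product of module categories over group algebras of finite groups (automorphism groups of the simple $R$-modules and their multiplicities), all of which are semi-simple in characteristic zero; this forces $\Ext^i=0$ for $i>0$. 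One should also record the $\Tor$ statement: since $k$ is a field, $\Tor_i^{k[\Proj_R]}(F_0,G_0)$ is dual to $\Ext^i_{k[\Proj_R]}(F_0,D_kG_0)$ by the duality isomorphisms recalled in section \ref{sec-recoll}, so $\Tor$-vanishing follows from $\Ext$-vanishing (alternatively, semi-simplicity of the functor category directly gives both).

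Combining the two steps yields $\Ext^i_{k[\A]}(F',G)=0$ and $\Tor_i^{k[\A]}(F,G)=0$ for all $i>0$, as claimed.

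\textbf{Main obstacle.} The only genuinely delicate point is matching the hypotheses on $R$ with what is actually needed in \cite{Ku-adv}: one must be sure that the semi-simplicity (or $\Ext$-acyclicity) of $k[\Proj_R]\Md$ holds for \emph{every} finite semi-simple ring $R$ in characteristic zero, not merely for group algebras or for fields, and that Kuhn's hypotheses (e.g. on the coefficient field containing enough roots of unity, or on finiteness) are either automatically satisfied here or can be reduced to by a base-change/flatness argument. Everything else — the verification that $\Proj_R(x,x)\otimes_\mathbb{Z}k=0$, the existence and uniqueness of the factorizations $F_0,F'_0,G_0$, and the passage between $\Ext$ and $\Tor$ — is routine given the machinery already assembled in the paper.
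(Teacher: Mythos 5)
Your approach is exactly the paper's: check that finiteness of $\Proj_R(x,x)$ together with $\operatorname{char} k=0$ gives $\Proj_R(x,x)\otimes_{\mathbb{Z}}k=0$, apply Theorem~\ref{thm-magique-csq1} to reduce to $\Ext$ and $\Tor$ over $k[\Proj_R]$, then invoke semi-simplicity of $k[\Proj_R]\Md$ via Kuhn. The one point you flag as an obstacle — passing from Kuhn's theorem over finite fields to an arbitrary finite semi-simple ring $R$ — is precisely what Proposition~\ref{pr-vanish-Kuhn} carries out: write $R$ as a product of matrix rings over finite fields, so that by Morita theory $\Proj_R\simeq\Proj_{\mathbb{F}(1)}\times\cdots\times\Proj_{\mathbb{F}(n)}$; observe that standard projectives of the product functor category decompose as external tensor products $P_1\boxtimes\cdots\boxtimes P_n$ of standard projectives on the factors; apply Kuhn's equivalence $k[\Proj_{\mathbb{F}(i)}]\Md\simeq\prod_{m\ge 0}k[\GL_m(\mathbb{F}(i))]\Md$ and Maschke to write each $P_i$ as a sum of simples; and finally use \cite[Cor 3.13 and Prop 3.7]{DTV} to see that an external tensor product of simple functors is again simple (this last input is needed because the category of functors on a product of categories is not a product of functor categories). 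One small caution about your parenthetical: weakening the hypothesis to $\operatorname{char} k\nmid|R|$ would not suffice, since Kuhn's equivalence involves $\GL_m(\mathbb{F}(i))$ for all $m$ and every prime divides $|\GL_m(\mathbb{F}_q)|$ for some $m$; characteristic zero is genuinely required.
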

\begin{ex}
If $\A$ is the category of finitely generated abelian groups, then the categories of finite dimensional vector spaces over a finite field $\Fp$ of prime cardinal $p$ are additive quotients of $\A$. The proposition implies for example that the functor $F:\A\to \mathbb{Q}\Md$ defined by $F(A)=\mathbb{Q}[A/pA]$ has no self-extensions of positive degree.
\end{ex}

The proof of proposition \ref{pr-vanish} is a direct application of theorem \ref{thm-magique-csq1} together with the following consequence of Kuhn's structure results \cite{Ku-adv}.
\begin{pr}\label{pr-vanish-Kuhn}
If $R$ is a finite semi-simple ring and if $k$ is field of characteristic zero, then every object of $k[\Proj_R]\Md$ is projective and semi-simple. 
\end{pr}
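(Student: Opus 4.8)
The plan is to reduce the statement to the single assertion that the abelian category $k[\Proj_R]\Md$ is \emph{semisimple}, meaning that every object is a direct sum of simple objects, and then to obtain from this alone that every object is projective. Indeed, if $k[\Proj_R]\Md$ is semisimple then every simple object $S$ is projective: given an epimorphism $\pi\colon X\to S$ and a decomposition $X=\bigoplus_\alpha S_\alpha$ into simple objects, $\pi$ cannot vanish on every $S_\alpha$ (otherwise it would vanish on $X$), so some $S_\alpha\to S$ is a nonzero map of simple objects, hence an isomorphism by Schur's lemma, and the composite $S\xrightarrow{\sim}S_\alpha\hookrightarrow X$ splits $\pi$. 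Moreover an arbitrary direct sum of projective objects is projective, since $\Hom_{k[\Proj_R]}(\bigoplus_i P_i,-)\simeq\prod_i\Hom_{k[\Proj_R]}(P_i,-)$ and products are exact in $k\Md$; hence every object of $k[\Proj_R]\Md$, being a direct sum of simples, is projective. It is of course also semi-simple, so the proposition is equivalent to the semisimplicity of $k[\Proj_R]\Md$.

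The next step is to reduce this semisimplicity to the case where $R$ is a finite field. By Artin--Wedderburn together with Wedderburn's little theorem on finite division rings, $R\cong\prod_{i=1}^n M_{d_i}(\mathbb{F}_{q_i})$ with each $\mathbb{F}_{q_i}$ a finite field; projective $R$-modules decompose according to this product of rings and $M_{d_i}(\mathbb{F}_{q_i})$ is Morita equivalent to $\mathbb{F}_{q_i}$, so there is an equivalence of additive categories $\Proj_R\simeq\prod_{i=1}^n\Proj_{\mathbb{F}_{q_i}}$. One then deduces the semisimplicity of $k[\Proj_R]\Md$ from that of the categories $k[\Proj_{\mathbb{F}_{q_i}}]\Md$ by a base-change and K\"unneth argument in the spirit of Proposition \ref{pr-Kunneth}: the standard projectives of a category of functors on a finite product are the external tensor products of standard projectives of the factors, and in characteristic zero --- through the separability of the relevant field and algebra extensions --- semisimplicity is preserved both under external tensor products and under the product of categories. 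This bookkeeping is somewhat fiddly, but standard.

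The heart of the matter is then the semisimplicity of $k[\Proj_{\mathbb{F}_q}]\Md$ when $k$ is a field of characteristic zero, and this is precisely what Kuhn's structure theory of the generic representations of $\mathbb{F}_q$ in nondescribing characteristic \cite{Ku-adv} provides: it identifies this locally finite Grothendieck category with a product of module categories over finite-dimensional $k$-algebras built from the general linear and the symmetric groups, and such algebras are semisimple in characteristic zero by Maschke's theorem. I expect the main obstacle to lie in this last step --- pinning down the precise form in which Kuhn's results yield the clean statement ``every object is semisimple'' --- together with keeping careful track of the fact that the category is only locally finite, so that ``semisimple'' must be understood to allow arbitrary direct sums, which is exactly what makes the formal argument of the first paragraph go through.
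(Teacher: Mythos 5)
Your proposal is correct and takes essentially the same route as the paper: reduction via Artin--Wedderburn, Wedderburn's little theorem and Morita equivalence to a product of categories $\Proj_{\mathbb{F}_{q_i}}$, decomposition of the standard projectives of the product category as external tensor products of standard projectives of the factors, and Kuhn's equivalence plus Maschke's theorem to split each factor into simples. The only (minor) divergence is at the external-product step, where the paper cites \cite{DTV} to get that external tensor products of simples are simple, while you invoke separability of the relevant finite-dimensional endomorphism algebras in characteristic zero to get semisimplicity of the external products, and you additionally spell out the formal deduction that semisimplicity of the category forces every object to be projective, which the paper leaves implicit.
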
 
\begin{proof}
Since $R$ is a finite semi-simple ring, then $R$ is isomorphic to a product $R_1\times\cdots\times R_n$ of finite simple rings, thus $\Proj_R$ is equivalent to  $\Proj_{R_1}\times\cdots\times \Proj_{R_n}$. Moreover each finite simple ring $R_i$ is isomorphic to a matrix ring over a certain finite field $\mathbb{F}(i)$. Hence by Morita theory, $\Proj_R$ is equivalent to $\Proj_{\mathbb{F}(1)}\times\cdots\times \Proj_{\mathbb{F}(n)}$. Thus we may replace $k[\Proj_R]\Md$ by $k[\Proj_{\mathbb{F}(1)}\times\cdots\times \Proj_{\mathbb{F}(n)}]\Md$ in this proof.
Every standard projective $P$ in the latter category can be written as:
$$P(x_1,\dots,x_n)\simeq P_1(x_1)\otimes \cdots\otimes P_n(x_n)\qquad(*)$$ 
where each $P_i$ is a standard projective of $k[\Proj_{\mathbb{F}(i)}]\Md$. 
The main result of \cite{Ku-adv} says that  $k[\Proj_{\mathbb{F}(i)}]\Md$ is equivalent to the infinite product $\prod_{n\ge 0}k[\GL_n(\mathbb{F}(i))]\Md$ and Maschke's theorem implies that $P_i$ a direct sum of simple functors. Therefore decomposition $(*)$ implies that $P$ is a direct sum of functors of the form $S_1(x_1)\otimes \cdots \otimes S_n(x_n)$ where each $S_i$ is simple, and by \cite[Cor 3.13 and Prop 3.7]{DTV} such functors are simple. We conclude that every standard projective of $k[\Proj_{\mathbb{F}(1)}\times\cdots\times \Proj_{\mathbb{F}(n)}]\Md$ is semi-simple, hence that every object of $k[\Proj_{\mathbb{F}(1)}\times\cdots\times \Proj_{\mathbb{F}(n)}]\Md$ is projective and semi-simple. 
\end{proof}

\section{A polynomial version of excision}\label{sec-4}

Let us start by recalling the classical concepts of reduced functors and polynomial functors defined on an essentially small additive category $\A$. Much of the material explained in this section will also be used for the separation theorem proved in section \ref{sec-sep}.

\subsection{Reduced functors}
A functor $F:\A\to k\Md$ is \emph{reduced} if it satisfies $F(0)=0$. We define the reduced part $F^\red$ of a functor $F$ by: 
$$F^\red(x):=\mathrm{Ker} (F(0):F(x)\to F(0))\;.$$
We have a decomposition $F\simeq F^\red\oplus F(0)$, where $F(0)$ denotes the constant functor with value $F(0)$. This decomposition is natural with respect to $F$ and induces
natural isomorphisms:
$$\Ext^*_{k[\A]}(F,G)\simeq\Ext^*_k(F(0),G(0))\oplus \Ext^*_{k[\A]}(F^\red,G^\red)\;.$$
In particular, there is no nonzero $\Ext$ between a constant functor and a reduced functor. This $\Ext$-vanishing between constant functors and reduced functors can be generalized to functors with several variables by applying proposition \ref{pr-no-Ext}. Namely we obtain the following vanishing result.
\begin{lm}\label{lm-vanish-red}
Let $\A_1, \dots, \A_n$ be essentially small additive categories, and let $F$ and $G$ be objects of $k[\A_1\times\cdots\A_n]\Md$. Assume that there is an $i$ such that $F$ is constant with respect to its $i$-th variable and such that $G$ vanishes if its $i$-th variable is zero. Then 
$\Ext^*_{k[\A_1\times\cdots\times\A_n]}(F,G)=0= \Ext^*_{k[\A_1\times\cdots\times\A_n]}(G,F)$.
\end{lm}

\subsection{Polynomial functors}\label{subsec-pol}

The notion of \textit{cross-effects} was introduced by Eilenberg and MacLane. To be more specific, they proved \cite[Thm 9.6]{EML} that for all functors $F:\A\to k\Md$, and for all positive integers $d$ the functor on $d$ variables $F(x_1\oplus\cdots \oplus x_d)$ has (up to isomorphism) a unique decomposition:
\begin{equation}F(x_1\oplus\cdots\oplus x_d)\simeq F(0)\oplus \bigoplus_{\sigma}F_\sigma(x_{\sigma_1},\dots,x_{\sigma_r})\label{eq-decomp-pol}
\end{equation}
where the sum runs over the non-void subsets $\sigma=\{\sigma_1,\dots,\sigma_r\}$ of $\{1,\dots,d\}$, and where the $F_\sigma$ are functors of $r$ variables which are zero whenever one of their variables is zero. The functor of $d$ variables $F_{\{1,\dots,d\}}:\A^{\times d}\to k\Md$ is called the $d$-th cross-effect of $F$, and we shall denote it by $\Cr_dF$. 
The functor $F$ is \emph{polynomial of degree less than $d$} if its $d$-th cross-effect is zero. This is equivalent to the fact that all the functors $\Cr_kF$ for $k\ge d$ are zero.

If $M$ is a $k$-module, applying $\Hom_k(-,M)$ to the decomposition \eqref{eq-decomp-pol} yields a similar decomposition for the dual functor $D_MF$. This proves that $\Cr_d(D_MF)\simeq D_M(\Cr_dF)$. As a consequence, $F$ is polynomial of degree less than $d$ if and only if $D_MF$ is polynomial of degree less than $d$ for all injective $k$-modules $M$.

The next proposition explains why the concept of polynomial functors is relevant for homological computations. The implication (i)$\Rightarrow$(iii) is known as Pirashvili's vanishing lemma. This vanishing lemma first appeared in \cite{Pira-Higher} and it has been widely used in homological computations. 
\begin{pr}\label{pr-eq}
Let $F:\A\to k\Md$ be a functor and let $d$ be a positive integer. The following four assertions are equivalent.
\begin{enumerate}[(i)]
\item $F$ is polynomial of degree less than $d$,
\item $\Hom_{k[\A]}(F_1\otimes\cdots\otimes F_d,F)=0$ for all reduced functors $F_1$,\dots,$F_d$, 

\item $\Ext^*_{k[\A]}(F_1\otimes\cdots\otimes F_d,F)=0$ for all reduced functors $F_1$,\dots,$F_d$,
\item $\Tor_*^{k[\A]}(F_1\otimes\cdots\otimes F_d,F)=0$ for all reduced functors $F_1$,\dots,$F_d$.
\end{enumerate}
\end{pr}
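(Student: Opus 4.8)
The plan is to prove the chain of implications (iii)$\Rightarrow$(iv)$\Rightarrow$(ii)$\Rightarrow$(i)$\Rightarrow$(iii), so that the four assertions become equivalent. The easy reductions come first. The implication (iii)$\Rightarrow$(ii) is immediate by looking at $\Ext^0=\Hom$. For (iii)$\Leftrightarrow$(iv), I would invoke the duality machinery of section~\ref{sec-recoll}: by proposition~\ref{pr-memechose}, for an injective $k$-module $M$ one has $\Ext^*_{k[\A]}(F_1\otimes\cdots\otimes F_d,D_MG)\simeq\Hom_k(\Tor_*^{k[\A]}(F_1\otimes\cdots\otimes F_d,G),M)$, and a $k$-module vanishes iff all its duals into injective $k$-modules vanish; one must also note that $D_M$ preserves reducedness and that every functor embeds in a product of standard injectives $D_MP^c$, so a d\'ecalage/universal-coefficient argument reduces the general $\Ext$ statement to the case $G=D_MP^c$, which is handled by the $\Tor$ statement. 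This gives (iii)$\Leftrightarrow$(iv) (and symmetrically lets one pass between the $\Hom$-level and the $\Tor_0$-level, which will be useful in the last step).

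The two substantive implications are (ii)$\Rightarrow$(i) and (i)$\Rightarrow$(iii). For (ii)$\Rightarrow$(i): suppose $F$ is \emph{not} polynomial of degree $<d$, i.e. $\Cr_dF\neq0$. The cross-effect $\Cr_dF:\A^{\times d}\to k\Md$ is reduced in each of its $d$ variables. The point is to exhibit a nonzero map $F_1\otimes\cdots\otimes F_d\to F$ with each $F_i$ reduced. The natural candidate is built from the decomposition~\eqref{eq-decomp-pol}: evaluating $F$ on a direct sum produces, via the summand projection and the diagonal $\delta:x\to x^{\oplus d}$, a natural transformation relating $F$ and $\Cr_dF\circ\Delta$; more precisely, $\Cr_dF(\Delta x)$ is a retract-up-to-the-other-summands of $F(x^{\oplus d})$, and composing with $F(\delta):F(x)\to F(x^{\oplus d})$ and the projection gives a natural map that detects $\Cr_dF$. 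Dually one gets a nonzero natural transformation out of a tensor product of reduced functors into $F$; concretely, if $\Cr_dF(a_1,\dots,a_d)\neq0$ one can use the reduced standard projectives $P^{a_i,\red}$ (the reduced parts of $P^{a_i}=k[\A(a_i,-)]$) and Yoneda to produce a nonzero element of $\Hom_{k[\A]}(P^{a_1,\red}\otimes\cdots\otimes P^{a_d,\red},F)$, contradicting (ii). I would set this up using the adjunctions of proposition~\ref{pr-sum-diagonal}, which express $\Ext$ (hence $\Hom$) between $\Delta^*$ of a box-tensor and $F$ in terms of $\Ext$ on $\A^{\times d}$; combined with the splitting~\eqref{eq-decomp-pol} this isolates the $\Cr_d$-summand.

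For (i)$\Rightarrow$(iii), assume $F$ is polynomial of degree $<d$ and let $F_1,\dots,F_d$ be reduced. I would argue by induction on $d$. The base-type case and the inductive step both rest on the following mechanism: write $F_1\otimes\cdots\otimes F_d$ evaluated at $x$ as $(F_1\otimes\cdots\otimes F_{d-1})(x)\otimes F_d(x)$, i.e. as $\Delta^*$ of the bifunctor $(F_1\otimes\cdots\otimes F_{d-1})\boxtimes F_d$ on $\A\times\A$. Using proposition~\ref{pr-sum-diagonal} to replace $\Ext^*_{k[\A]}(\Delta^*(\,\cdot\,\boxtimes F_d),F)$ by a computation on $\A\times\A$ involving $\Sigma^*F$, and then analyzing $\Sigma^*F(x,y)=F(x\oplus y)$ through the Eilenberg–MacLane decomposition~\eqref{eq-decomp-pol} — which, since $\deg F<d$, has all cross-effects of order $\ge d$ vanishing — one reduces $\Ext$ against $F$ to $\Ext$ against the cross-effects $\Cr_rF$ with $r<d$, each of which is reduced in fewer variables and polynomial of lower total degree in a suitable sense. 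Then proposition~\ref{pr-no-Ext} applies: in each resulting term one of the variables sees a tensor factor $F_i$ that is reduced against a cross-effect piece that, by the inductive hypothesis (fewer reduced factors, or a Pirashvili-type vanishing already known in $d-1$ variables), contributes no $\Ext$; feeding hypothesis (i) or (ii) of proposition~\ref{pr-no-Ext} kills the whole $\Ext$ on $\A\times\A$. Lemma~\ref{lm-vanish-red} handles the constant summand $F(0)$ appearing in~\eqref{eq-decomp-pol}.

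The main obstacle will be the bookkeeping in (i)$\Rightarrow$(iii): carefully matching the multi-variable decomposition~\eqref{eq-decomp-pol} of $\Sigma^*F$ with the tensor decomposition of $F_1\otimes\cdots\otimes F_d$ so that proposition~\ref{pr-no-Ext} can be applied with the correct variable playing the vanishing role, and ensuring the induction is set up on the right quantity (number of reduced tensor factors versus polynomial degree) so that it actually closes. The simplicial/topological input is not needed here; everything is formal homological algebra once~\eqref{eq-decomp-pol}, proposition~\ref{pr-sum-diagonal}, and proposition~\ref{pr-no-Ext} are in hand.
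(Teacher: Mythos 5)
Your overall logic is sound and the individual implications can all be made to work, but your route is considerably longer than the paper's and in two places could be tightened.

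The paper's proof is nearly a one-liner: the chain (i)$\Leftrightarrow$(ii)$\Leftrightarrow$(iii) is cited from \cite[Cor 2.17]{DTV} (with the remark that the proof there, written for a field, goes through verbatim over any commutative ring), and (iv) is handled by duality in a single step. Precisely, by proposition~\ref{pr-memechose}, assertion (iv) is equivalent to $\Ext^*_{k[\A^\op]}(F_1\otimes\cdots\otimes F_d,D_MF)=0$ for all injective $k$-modules $M$ and all reduced $F_i$, which by the already-established (i)$\Leftrightarrow$(iii) (applied in $k[\A^\op]\Md$) is equivalent to $D_MF$ being polynomial of degree less than $d$ for all such $M$, which in turn is equivalent to (i). You instead re-prove (i)$\Leftrightarrow$(ii)$\Leftrightarrow$(iii) from scratch; that is legitimate, and the tools you reach for --- cross-effects, proposition~\ref{pr-sum-diagonal}, proposition~\ref{pr-no-Ext}, lemma~\ref{lm-vanish-red} --- are the right ones, so your approach buys self-containedness at the cost of length.

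Two specific remarks. First, your (i)$\Rightarrow$(iii) does not need a two-variable induction. Working directly on $\A^{\times d}$ is cleaner: by proposition~\ref{pr-sum-diagonal},
$$\Ext^*_{k[\A]}(F_1\otimes\cdots\otimes F_d,F)\simeq \Ext^*_{k[\A^{\times d}]}(F_1\boxtimes\cdots\boxtimes F_d,\Sigma^*F)\;,$$
and in the decomposition~\eqref{eq-decomp-pol} of $\Sigma^*F$ the top cross-effect $F_{\{1,\dots,d\}}=\Cr_dF$ vanishes because $\deg F<d$; every remaining summand is constant in at least one variable $i$, while $F_1\boxtimes\cdots\boxtimes F_d$ vanishes when the $i$-th variable is zero, so lemma~\ref{lm-vanish-red} kills each summand and no induction is needed. (Your inductive route can be made to work, but it requires tracking that $\Cr_2F(-,y)$ has degree $\le d-2$, which you do not make explicit.) Second, your direct attack on (iii)$\Leftrightarrow$(iv) glosses over the fact that the reduced tensor factors $F_1,\dots,F_d$ in (iii) are covariant while in (iv) they are contravariant, so the two statements do not compare directly under $D_M$; the clean formulation is (iv)$\Leftrightarrow$(i), via the observation that $\Cr_d(D_MF)\simeq D_M(\Cr_dF)$ so that $D_MF$ is polynomial of degree less than $d$ for all injective $M$ iff $F$ is, which is exactly what the paper does.
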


\begin{proof}
The equivalences (i)$\Leftrightarrow$ (ii)$\Leftrightarrow$ (iii) are already proved in \cite[Cor 2.17]{DTV} -- to be more specific, \cite[Cor 2.17]{DTV} assumes that $k$ is the field, but the proof carries out verbatim over an arbitrary commutative ring $k$.
So we bound ourselves to proving that (iv) is equivalent to the first three assertions. By proposition \ref{pr-memechose} (iv) is equivalent to 
$\Ext^*_{k[\A^\op]}(F_1\otimes\cdots\otimes F_d,D_MF)$ being zero
for all injective $k$-modules $M$, which is equivalent to $D_MF$ being polynomial of degree less than $d$ for all injective $k$-modules $M$, which is equivalent to $F$ being polynomial of degree less than $d$.
\end{proof}

\subsection{Polynomial excision} The next statement is an analogue of theorem \ref{thm-magique-csq1} which applies to polynomial functors. Observe that the hypothesis on $\phi$ in the polynomial excision theorem is essentially `orthogonal' to the hypothesis on $\phi$ in theorem \ref{thm-magique-csq1}. A typical situation in which theorem \ref{thm-poly-excis} applies is when $n=0$ in $k$ and $\B$ is the additive quotient of $\A$ obtained by modding out each abelian group $\A(x,y)$ by its subgroup $n\A(x,y)$.  
\begin{thm}[Polynomial excision]\label{thm-poly-excis}
Let $\phi:\A\to \B$ be an additive quotient. Assume that for all $x$, $\phi$ induces isomorphisms 
$$\A(x,x)\otimes_\mathbb{Z}k\simeq \B(\phi x,\phi x)\otimes_\mathbb{Z}k \quad\text{ and }\quad \Tor_1^\mathbb{Z}(\A(x,x),k)\simeq \Tor_1^\mathbb{Z}(\B(\phi x,\phi x),k)\;.$$
Then for all functors $G$ and for all polynomial functors $F$, restriction along $\phi$ induces graded isomorphisms
\begin{align*}
&\res^\phi\,:\; \Ext^\bullet_{k[\B]}(G,F)\simeq \Ext^\bullet_{k[\A]}(\phi^*G,\phi^*F)\;,\\
&\res_\phi\,:\; \Tor_\bullet^{k[\A]}(\phi^*G,\phi^*F)\simeq \Tor_\bullet^{k[\B]}(G,F) \;.
\end{align*}
\end{thm}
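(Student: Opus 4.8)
The plan is to follow the strategy of Section~\ref{sec-3} — reduce to a $\Tor$-vanishing for the restriction of a standard projective, and evaluate it by simplicial resolutions — but to use the polynomiality of $F$ to make the resulting computation terminate.

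\emph{Formal reductions.} By the compatibility of the duality isomorphisms with restriction (Proposition~\ref{pr-memechose}) and the fact, recalled in Section~\ref{subsec-pol}, that $D_MF$ is polynomial if and only if $F$ is, it suffices to treat the $\Tor$-statement; and since $\phi^{\op}\colon\A^{\op}\to\B^{\op}$ is again an additive quotient satisfying the same hypotheses ($\A(x,x)=\A^{\op}(x,x)$ and $\B(x,x)=\B^{\op}(x,x)$), the $\Ext$-statement follows in the same way. For the $\Tor$-statement, resolve $G$ by direct sums of standard projectives $P^c_{\B^{\op}}$; applying the exact functor $\phi^*$ gives a resolution of $\phi^*G$ in $\Mdd k[\A]$, and $\res_\phi$ is an isomorphism in degree~$0$ because $\phi^*$ is fully faithful. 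Hence, by the standard argument with acyclic resolutions, the theorem reduces to the \emph{key vanishing}: $\Tor^{k[\A]}_i(\phi^*P^c_{\B^{\op}},\phi^*F)=0$ for $i>0$, all objects $c$ of $\B$, and all polynomial $F$.

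\emph{Computing the key $\Tor$.} As in Section~\ref{sec-3}, choose a simplicial resolution $Q_\bullet\to\B(\phi(-),c)$ in $\Mdd\A$ by direct sums of standard additive projectives $\A(-,a)$. By Lemma~\ref{lm-kcrochet-tens} and the Hurewicz theorem, $k[Q_\bullet]$ is a degreewise flat resolution of $k[\B(\phi(-),c)]=\phi^*P^c_{\B^{\op}}$, and the dual Yoneda isomorphism \eqref{eq-Yoneda-dual} identifies $\Tor^{k[\A]}_*(\phi^*P^c_{\B^{\op}},\phi^*F)$ with the homotopy groups of the simplicial $k$-module $p\mapsto F(\phi(Q_p))$, where, for $Q_p=\bigoplus_j\A(-,a_{p,j})$, one reads $F(\phi(Q_p))=\colim_S F(\bigoplus_{j\in S}\phi(a_{p,j}))$ over finite subsets $S$. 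Now invoke polynomiality: the cross-effects $\Cr_rF$ vanish for $r$ large and the top nonzero one is multilinear, so by the Eilenberg--MacLane decomposition \eqref{eq-decomp-pol} the abelian group $F(\phi(Q_p))$ splits off the constant summand $F(0)$ (contributing only in degree~$0$) plus finitely many pieces built from the $\Cr_rF$ evaluated on tuples of the objects $\phi(a_{p,j})$. After using induction on the polynomial degree to absorb the lower cross-effects, this expresses $p\mapsto F(\phi(Q_p))$, up to symmetrisation, in terms of $\Tor^{k[\A^{\times r}]}$-type data for the functors $(\phi^{\times r})^*\Cr_rF$ on the product categories $\A^{\times r}$, together with the $r$-fold analogues of $Q_\bullet$.

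\emph{Where the hypotheses enter.} The key vanishing thus reduces to its analogue on the $\A^{\times r}$ for \emph{multilinear} functors, and here both hypotheses on $\phi$ are used. First, $\A(x,x)\otimes_{\mathbb Z}k\xrightarrow{\ \sim\ }\B(x,x)\otimes_{\mathbb Z}k$ implies $\A(x,y)\otimes_{\mathbb Z}k\xrightarrow{\sim}\B(\phi x,\phi y)\otimes_{\mathbb Z}k$ for all $x,y$ (because $\A(x,y)$ is a direct summand of $\A(x\oplus y,x\oplus y)$ and $\phi$ preserves biproducts), which means precisely that $\phi$ induces an equivalence $k\otimes_{\mathbb Z}\A\simeq k\otimes_{\mathbb Z}\B$ of linearised categories, hence an equivalence of their $r$-fold tensor products, which control the homology of multilinear functors; this transports the \emph{additive} homology of such functors isomorphically along $\phi$. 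Second, $\Tor^{\mathbb Z}_1(\A(x,x),k)\xrightarrow{\sim}\Tor^{\mathbb Z}_1(\B(x,x),k)$ is exactly what is needed — via the $k$-local Hurewicz theorem (Corollary~\ref{cor-Hur}) and the computation of the homology of Eilenberg--MacLane-type objects — to bridge between this additive homology and the linearised $\Tor^{k[\A]}$ we are after, just as in the proof of Theorem~\ref{thm-magique-general}; the gain over that theorem is that, $F$ being polynomial, only the bounded-cross-effect part of $\phi^*P^c_{\B^{\op}}$ can interact with $\phi^*F$, so the computation closes up.

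\emph{Main obstacle.} The formal part (reductions and dévissage) is routine. The real work is the third step: organising the Eilenberg--MacLane/cross-effect decomposition of $p\mapsto F(\phi(Q_p))$ so that the $k$-local Hurewicz input applies piece by piece, controlling the contributions of the lower cross-effects by induction on the degree, and thereby isolating why it is the $\Tor^{\mathbb Z}_1$-condition — and not merely the $\otimes_{\mathbb Z}k$-condition of Theorem~\ref{thm-magique-csq1} — that is the correct hypothesis. This is precisely where the simplicial techniques collected in the appendix are indispensable.
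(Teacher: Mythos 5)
Your formal reductions are essentially sound (modulo a small care about variance: the clean direction is $\Ext\Rightarrow\Tor$ via Proposition~\ref{pr-memechose} and the stability of polynomiality under $D_M$, whereas going $\Tor\Rightarrow\Ext$ requires inserting injective cogenerators, which is slightly awkward since a polynomial $F$ need not be of the form $D_M F'$). The identification $\Tor^{k[\A]}_\ast(\phi^*P^c_{\B^{\op}},\phi^*F)\simeq\pi_\ast\bigl(p\mapsto F(\phi(Q_p))\bigr)$ is also correct, by the same flatness-of-$k[Q_\bullet]$ argument as in the proof of Theorem~\ref{thm-magique-general}. But you have not carried out the computation you announce, and you say so yourself in the ``Main obstacle'' paragraph. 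The difficulty is real: the Eilenberg--MacLane cross-effect decomposition of $F(\phi(a_{p,1})\oplus\cdots\oplus\phi(a_{p,n_p}))$ is indexed by subsets of $\{1,\dots,n_p\}$, and the simplicial face and degeneracy maps of $Q_\bullet$ (which sum and re-index the generators $a_{p,j}$) do not respect that decomposition, so it does not split the simplicial $k$-module $p\mapsto F(\phi(Q_p))$ into computable pieces. Your proposal reduces to a claim that ``the computation closes up'' which is precisely what still has to be proved.

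The paper bridges this gap by a different mechanism, and it is worth seeing why it sidesteps your obstruction. Writing $\phi^*P^{\phi(x)}_\B = k\bigl[\A(x,-)/\J(x,-)\bigr]$ with $\J$ the kernel ideal, the hypotheses translate into $\J(x,y)\otimes_{\mathbb Z}k=0=\Tor_1^{\mathbb Z}(\J(x,y),k)$ for all $y$. Then one resolves $\phi^*P$ in $k[\A]\Md$ not by the linearisation $k[Q_\bullet]$ of an additive resolution, but by the normalised bar complex
\[
\cdots\to I^{\otimes i}\otimes k[A]\to\cdots\to k[A]\to k[A/J]\to 0
\]
of $k[A]$ as a free $k[J]$-module, where $A=\A(x,-)$, $J=\J(x,-)$, and $I=k[J]^{\red}$. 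The degree-$0$ term is a standard projective, and the positive-degree terms are killed against $\phi^*F$ by a separate vanishing lemma (Lemma~\ref{lm-vanish-pol}), which states that $\Ext^\ast_{k[\A]}(k[G]^{\red}\otimes H,F)=0$ for any $H$, any polynomial $F$, and any $G$ with $k\otimes_{\mathbb Z}G(x)=0=\Tor_1^{\mathbb Z}(k,G(x))$. That lemma is where the polynomiality is actually exploited: one uses a second bar complex and a hypercohomology spectral sequence, feeding in Pirashvili's vanishing (Proposition~\ref{pr-eq}) to kill the base case $r>\deg F$ and then descending by induction. The Eilenberg--MacLane input (Lemma~\ref{lm-EML-vanish}) is used only to justify exactness of the bar complex, not to compute the relevant $\Tor$ piece by piece. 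So the two hypotheses on $\phi$ enter together, as the $k$-negligibility of the kernel ideal, rather than being split into ``equivalence of linearised categories'' plus ``$k$-local Hurewicz'' as you suggest. Your sketch is a plausible starting point, but the decisive step is exactly the one you have not supplied, and the bar-complex strategy is what makes it go through.
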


The proof of theorem \ref{thm-poly-excis} depends on the following vanishing lemma, which will be also useful in the proof of the separation theorem in section \ref{sec-sep}.

\begin{lm}\label{lm-vanish-pol}
Let $G:\A\to \mathbb{Z}\Md$ be a functor such that $\Tor^\mathbb{Z}_1(k,G(x))=0$ and $k\otimes_{\mathbb{Z}}G(x)=0$ for all objects $x$ of $\A$, and let $k[G]$ be its composition with the $k$-linearization functor $k[-]$. For all functors $H$ and for all polynomial functors $F$ we have:
$$\Ext^*_{k[\A]}(k[G]^\red\otimes H,F)=0\;.$$
\end{lm}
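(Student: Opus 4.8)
The plan is to reduce the statement to Pirashvili's vanishing lemma, i.e.\ to the implication (i)$\Rightarrow$(iii) in proposition \ref{pr-eq}, by showing that $k[G]^\red$ is a quotient (or more conveniently a subfunctor, after dualizing) of a tensor product of $d$ reduced functors, where $d-1$ bounds the polynomial degree of $F$. The key observation is that if $G$ is additive, then $k[G]$ carries the structure coming from the comultiplication $G(x)\to G(x)\oplus G(x)$, which after $k$-linearization gives $k[G(x)]\to k[G(x)]\otimes k[G(x)]$; iterating, $k[G]^\red$ is a retract, in a suitable range, of $(k[G]^\red)^{\otimes d}$ for every $d$, because the reduced part of the exponential functor $k[G]$ behaves like a divided/symmetric power object whose ``weight $1$ piece'' splits off its $d$-fold self-tensor. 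More precisely, the composite $k[G]^\red \to k[G]^{\otimes d} \to (k[G]^\red)^{\otimes d}$ followed by the multiplication $(k[G]^\red)^{\otimes d}\to k[G]$ is an isomorphism onto $k[G]^\red$ after inverting the relevant integers — and this is exactly where the hypotheses $k\otimes_{\mathbb Z}G(x)=0$ and $\Tor_1^{\mathbb Z}(k,G(x))=0$ enter, via lemma \ref{lm-kcrochet-tens} (or directly through a universal coefficient computation) to guarantee that the arithmetic needed to split off the weight-one piece is available over $k$.

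Concretely, I would proceed as follows. First, observe that since $G$ is additive, $k[G]$ is an exponential functor: $k[G(x\oplus y)]\simeq k[G(x)]\otimes k[G(y)]$, naturally. Hence by the Eilenberg--MacLane decomposition \eqref{eq-decomp-pol} every cross-effect of $k[G]$ is a tensor product of copies of $k[G]^\red$; in particular $\Cr_d(k[G])\simeq (k[G]^\red)^{\otimes d}$. Second, I would show that the natural map $(k[G]^\red)^{\otimes d}=\Cr_d(k[G])\to k[G]^{\otimes d}$, composed with the iterated multiplication $k[G]^{\otimes d}\to k[G]$ induced by the sum map $x^{\oplus d}\to x$, realizes $k[G]^\red$ as a direct summand of $(k[G]^\red)^{\otimes d}$ tensored with $H$, in a way compatible with $H$. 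This uses that the hypotheses force $k[G(x)]^{\red}$ to be, objectwise, a $k$-module on which the relevant idempotents (those of the symmetric group action picking out the weight-one subspace of the $d$-th tensor power of an augmentation ideal) are defined; the hypotheses $k\otimes_{\mathbb Z}G(x)=0$, $\Tor_1^{\mathbb Z}(k,G(x))=0$ are precisely what is needed for lemma \ref{lm-kcrochet-tens} to identify $k[G]^{\otimes j}\otimes_{\dots}$ appropriately and for the splitting to be $k$-linear. Third, having written $k[G]^\red$ as a retract of $(k[G]^\red)^{\otimes d}$, I tensor with $H$: then $k[G]^\red\otimes H$ is a retract of $(k[G]^\red)^{\otimes(d-1)}\otimes\big((k[G]^\red)\otimes H\big)$, which is a tensor product of $d$ reduced functors (the last factor $k[G]^\red\otimes H$ is reduced since $k[G]^\red$ is, and the others are reduced). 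Finally, choosing $d$ so that $F$ is polynomial of degree less than $d$, proposition \ref{pr-eq}, (i)$\Rightarrow$(iii), gives $\Ext^*_{k[\A]}(k[G]^\red\otimes H,F)=0$, since $\Ext^*$ vanishes on a direct summand of an $\Ext$-acyclic object.

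The main obstacle I expect is the second step: establishing that $k[G]^\red$ genuinely splits off $(k[G]^\red)^{\otimes d}$ over the ground ring $k$, rather than merely over $\mathbb Q$ or $\mathbb Z[1/d!]$. The naive splitting of the augmentation ideal of a group algebra $k[A]$ out of its $d$-fold tensor power requires inverting $d!$; to avoid that, one must use that the hypotheses make $G(x)$ behave, after applying $k\otimes_{\mathbb Z}(-)$ and $\Tor_1^{\mathbb Z}(k,-)$, like a ``$k$-free'' object in a range, so that the bar-type computation of $k[G]$ degenerates. I would handle this by invoking lemma \ref{lm-kcrochet-tens} together with a filtration of $k[G]$ by powers of the augmentation ideal, identifying the associated graded pieces as (divided or symmetric) powers of $G\otimes_{\mathbb Z}k=0$ and the relevant $\Tor$ terms — all of which vanish, leaving only the weight-$0$ and weight-$1$ pieces, which is exactly the desired splitting $k[G]\simeq k\oplus k[G]^\red$ with $k[G]^\red$ behaving additively. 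This degeneration is the technical heart and is where the two hypotheses are consumed; everything else is formal manipulation of cross-effects and Pirashvili's lemma.
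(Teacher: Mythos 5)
Your plan correctly identifies the target (Pirashvili's vanishing lemma, i.e.\ proposition~\ref{pr-eq}(i)$\Rightarrow$(iii)) and correctly locates the role of the hypotheses on $G$ in some bar-construction/augmentation-ideal arithmetic, but the central structural claim is false: the reduced functor $k[G]^\red=I$ is \emph{not} a retract of $(k[G]^\red)^{\otimes d}$, even after all the reductions you propose. Write $u_a=[a]-[0]\in I$. Your candidate splitting is the composite $I\to I^{\otimes d}\to I$ of the $I\otimes\cdots\otimes I$ component of the iterated comultiplication of $k[G(x)]$ followed by the iterated multiplication; for $d=2$ this sends $u_a\mapsto u_a\otimes u_a\mapsto u_a^2=[2a]-2[a]+[0]$, which is a nontrivial operation (essentially a $\lambda$-ring/Verschiebung-type operation), not the identity, and inverting integers does not repair it. Worse, under the hypotheses $k\otimes_\mathbb{Z}G(x)=0$ and $\Tor_1^\mathbb{Z}(k,G(x))=0$, the ``weight one'' graded piece $I/I^2\simeq G(x)\otimes_\mathbb{Z}k$ vanishes, so there is no weight-one summand for such a retraction to land on; this is a sign that the retract picture is the wrong model in this regime.

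What the hypotheses actually give — via lemma~\ref{lm-EML-vanish} — is the vanishing of $H_{>0}(G(x);k)$, hence the \emph{exactness} of the reduced normalized bar complex $\cdots\to I^{\otimes i+1}\to I^{\otimes i}\to\cdots\to I^{\otimes 2}\to I\to 0$. Exactness is strictly weaker than a retract statement, and it is the right one. The paper then tensors this exact complex by $I^{\otimes r-1}\otimes H$ to produce a (non-projective) resolution of $I^{\otimes r}\otimes H$, runs the hypercohomology spectral sequence $E_1^{s,t}(r)=\Ext^t_{k[\A]}(I^{\otimes s+r+1}\otimes H,F)\Rightarrow\Ext^{s+t}_{k[\A]}(I^{\otimes r}\otimes H,F)$, and does downward induction on $r$: for $r>\deg F$ the $E_1$-page vanishes by proposition~\ref{pr-eq}, and the vanishing then propagates down to $r=1$. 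So the missing idea in your plan is to replace ``$I$ is a retract of $I^{\otimes d}$'' (false) by ``$I$ is resolved by $I^{\otimes d}$'s'' (true), and to replace the one-step retract-plus-Pirashvili argument by a spectral-sequence/d\'ecalage induction. The reduction to $G$ reduced and the use of Pirashvili's lemma as the terminal vanishing are correct and are also used in the paper.
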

\begin{proof}
The functor $k[G]$ is isomorphic to $k[G(0)]\otimes k[G^\red]$, hence to a direct sum of copies of $k[G^\red]$. Therefore it suffices to prove the lemma when $G$ is reduced.

If $G$ is reduced, the reduced functor $k[G]^\red$ is equal to the functor $I$, which sends every $x$ to the augmentation ideal $I(x)$ of the $k$-algebra $k[G(x)]$ of the abelian group $G(x)$.   
By lemma \ref{lm-EML-vanish}, the abelian group $G(x)$ has trivial homology with coefficients in $k$. Thus, the reduced normalized bar construction of $k[G(x)]$ yields an exact complex 
$$ \dots\to I^{\otimes i+1}\to I^{\otimes i}\to \dots \to I^{\otimes 2}\to I\to 0\;.$$ 
Tensoring this complex by $I^{\otimes r-1}\otimes H$ yields a (non projective) resolution of $I^{\otimes r}\otimes H$ with associated hypercohomology spectral sequence:
$$E_1^{s,t}(r)=\Ext^t_{k[\A]}(I^{\otimes s+r+1}\otimes H,F)\Rightarrow \Ext^{s+t}_{k[\A]}(I^{\otimes r}\otimes H,F)\;.$$
We use this spectral sequence to that $\Ext^*_{k[\A]}(I^{\otimes r}\otimes H,F)$ vanishes for all positive $r$, by downward induction on $r$. Since $I^{\otimes r}\otimes H$ is the direct sum of $I^{\otimes r}\otimes H^\red$ and $I^{\otimes r}\otimes H(0)$, the vanishing holds for $r>\deg F$ by proposition \ref{pr-eq}. Now if the vanishing holds for a given $r$, then $E^1_{*,*}(r-1)=0$, hence $\Ext^*_{k[\A]}(I^{\otimes r-1}\otimes H,F)=0$. The result follows.
\end{proof}

\begin{proof}[Proof of theorem \ref{thm-poly-excis}]
We first prove the $\Ext$-isomorphism. Since $\phi$ is essentially surjective and fully faithful, the restriction functor $\phi^*:k[\B]\Md\to k[\A]\Md$ is fully faithful. Hence, in order to prove that $\res^\phi$ is an isomorphism, we only have to prove that  
$$\Ext^i_{k[\A]}(\phi^*P,\phi^*F)=0\qquad(*)$$
for all positive $i$ and for all projective objects $P$ of $k[\B]\Md$. But every projective object of $k[\B]\Md$ is a direct summand of a direct sum of standard projectives, and $\phi$ is essentially surjective, hence it suffices to prove $(*)$ when $P=k[\B(\phi(x),-)]$ with $x$ and object of $\A$. 

Let $\J(x,-)$ be the subfunctor of $\A(x,-):\A\to \mathbb{Z}\Md$ such that
$$\J(x,y)=\{f\in\A(x,y)\;,\; \phi(f)=0\}\;.$$ 
Then the functor $\phi^*P$ is isomorphic to the functor $k[\A(x,-)/\J(x,-)]$. Moreover, the hypotheses of the theorem imply that $\phi$ induces isomorphisms
$$\A(x,y)\otimes_\mathbb{Z}k\simeq \B(\phi x,\phi y)\otimes_\mathbb{Z}k \quad\text{ and }\quad \Tor_1^\mathbb{Z}(\A(x,y),k)\simeq \Tor_1^\mathbb{Z}(\B(\phi x,\phi y),k)$$
for all $y$, hence that for all $y$
$$\J(x,y)\otimes_\mathbb{Z}k = 0=\Tor_1^\mathbb{Z}(\J(x,y),k)\;.$$

Now if $A$ is an abelian group and $J$ is a subgroup of $A$, then $k[A]$ is a free $k[J]$-module, and $k\otimes_{k[J]}k[A]\simeq k[A/J]$. Therefore the normalized bar construction yields an exact complex, where $I\subset k[J]$ denotes the augmentation ideal of $k[J]$:
$$\dots\to \underbrace{I^{\otimes i}\otimes k[A]}_{\deg i}\to I^{\otimes i-1}\otimes k[A] \to \dots\to \underbrace{k[A]}_{\deg 0}\to \underbrace{k[A/J]}_{\deg -1}\to 0\;.$$
We consider this complex with $J=\J(x,-)$ and $A=\A(x,-)$. The resulting complex is a complex of functors with $\phi^*P$ in degree $-1$, which is a standard projective in degree $0$, and whose terms of positive degrees have no nontrivial $\Ext$ against $\phi^*F$ by lemma \ref{lm-vanish-pol} (and because $\phi^*F$ is polynomial, as the composition of a polynomial functor with an additive functor).
Therefore, the vanishing property $(*)$ holds by a standard $\delta$-functor argument.

This proves that $\res^\phi$ is a graded isomorphism. The fact that $\res_\phi$ is a graded isomorphism now follows by duality (use proposition \ref{pr-memechose} and the fact that $D_MF$ is polynomial if $F$ is polynomial).
\end{proof}

\begin{ex} Let $p$ be a prime, $V$ be any $\mathbb{Z}[1/p]$-module and $R:=\mathbb{Z}\ltimes V$ denote the square-zero extension of the ring of integers by $V$. The quotient map $R\twoheadrightarrow\mathbb{Z}$ induces an additive quotient functor $\phi: \Proj_R\to\Proj_\mathbb{Z}$. If $k$ is a ring of characteristic $p$, then $\phi$ fulfils the assumptions of Theorem~\ref{thm-poly-excis}, thus $\res^\phi$ and $\res_\phi$ are isomorphisms.
\end{ex}

%

\section{Separation}\label{sec-sep}

\subsection{The separation theorem}
The notion of a polynomial functor is recalled in section \ref{subsec-pol}. We now recall the definition of an antipolynomial functor. An \emph{ideal} of the additive category $\A$ is \cite[p.18]{Mi72} a subfunctor of $\A(-,-):\A^\op\times \A\to \mathbf{Ab}$. Given such an ideal $\I$, we can form the additive quotient $\A/\I$ of $\A$, with the same objects as $\A$ and with morphisms $({\A/\I})(x,y)=\A(x,y)/\I(x,y)$. We let $\pi_\I:\A\to \A/\I$ denote the additive quotient functor. The next definitions were first introduced in {\cite[section 4.1]{DTV}}.
\begin{defi}
An additive category $\B$ is \emph{$k$-trivial} if for all objects $x$ and $y$ the abelian group $\B(x,y)$ is finite and such that $k\otimes_\mathbb{Z}\B(x,y)=0$.  
An ideal $\I$ of $\A$ is \emph{$k$-cotrivial} if $\A/\I$ is $k$-trivial.
\end{defi}
\begin{defi}\label{defi-A}
A functor $F:\A\to k\Md$ is \emph{antipolynomial}  if there is a $k$-cotrivial ideal $\I$ of $\A$ such that $F$ factors though $\pi_\I:\A\to \A/\I$. 
\end{defi}

Observe that any (not necessarily full) subcategory of a $k$-trivial category is itself $k$-trivial. Thus, $F:\A\to k\Md$ is antipolynomial if and only if it factors through a $k$-trivial category. For example, let $\A$ be the category of finitely generated abelian groups and let $n$ be an integer invertible in $k$. Then for all $d$ the functor $F(a)=k[\Lambda^d(a/na)]$, sending an abelian group $a$ to the free $k$-module with basis the elements of the $d$-th exterior power of the abelian group $a/na$, is antipolynomial. Indeed, $F$ factors though the category of finitely generated $\mathbb{Z}/n\mathbb{Z}$-modules, which is $k$-trivial.

\begin{defi}\label{defi-AP}
A bifunctor $B:\A\times\A\to k\Md$ is of \emph{antipolynomial-polynomial type} (AP-type) if  for all objects $x$ of $\A$ the functor $y\mapsto B(y,x)$ is antipolynomial and the functor $y\mapsto B(x,y)$ is polynomial.
\end{defi}
Typical examples of bifunctors of AP-type are the bifunctors $B(x,y)=A(x)\otimes_k P(y)$ obtained by tensoring functors $A,P:\A\to k\Md$, where $A$ is antipolynomial and $P$ is polynomial.

The main result of the section is the following theorem, which is theorem \ref{thm-AP-type} of the introduction.

\begin{thm}[separation]\label{thm-AP-type-bis}Let $k$ be a commutative ring and let $B$, $C$, $B'$ be three bifunctors of AP-type, with $B'$ contravariant in both variables. Restriction along the diagonal $\Delta:\A\to \A\times\A$ yields graded isomorphisms:
\begin{align*}
&\res^\Delta\;:\;\Ext^\bullet_{k[\A\times\A]}(B,C)\simeq \Ext^\bullet_{k[\A]}(\Delta^*B,\Delta^*C)\;,\\
&\res_\Delta\;:\;\Tor_\bullet^{k[\A]}(\Delta^*B',\Delta^*C)\simeq \Tor_\bullet^{k[\A\times \A]}(B',C)\;.
\end{align*}
\end{thm}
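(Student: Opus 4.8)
The plan is to factor the diagonal restriction through a sequence of intermediate categories, so that the separation isomorphism becomes a combination of excision-type statements already at our disposal. The key is the decomposition of the ambient category. Since $B$, $C$, $B'$ are of AP-type, each is antipolynomial in the first variable, meaning it factors through $\pi_{\I}\times\id : \A\times\A \to (\A/\I)\times\A$ for a suitable $k$-cotrivial ideal $\I$; by intersecting the ideals attached to $B$, $C$, $B'$ we may assume a common $k$-cotrivial ideal $\I$ works for all three. Write $\B=\A/\I$, which is $k$-trivial, and write $q=\pi_\I:\A\to\B$. Then $B = (q\times\id)^*\widetilde B$ for bifunctors $\widetilde B$, $\widetilde C$, $\widetilde B'$ on $\B\times\A$ that are still polynomial in the second variable, and likewise for the others.

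First I would dispose of the ``polynomial'' direction. Consider the restriction along $\id_\B\times\Delta : \B\times\A \to \B\times\A\times\A$; I claim this is an isomorphism on $\Ext$ and $\Tor$ between bifunctors which are polynomial in the last $\A$-variables. This is proved exactly as in Proposition~\ref{pr-eq}/Proposition~\ref{pr-no-Ext}: one takes a resolution of $\widetilde B$ adapted to the $\B$-variable (by $P^b_\B\boxtimes(-)$) and is reduced to showing $\res^\Delta$ is an isomorphism for $\Ext$ and $\Tor$ of \emph{polynomial} functors of one variable, restricted along $\Delta:\A\to\A\times\A$ — but a polynomial bifunctor $G(x,y)$ with $G$ polynomial in each variable, and $\Delta^*G(x)=G(x,x)$, has the property that $\res^\Delta$ for $\Ext(\Sigma^*F,G)$-type terms is an isomorphism, which follows by combining Proposition~\ref{pr-sum-diagonal} with Pirashvili's vanishing (Proposition~\ref{pr-eq}): the ``error terms'' $F_\sigma$ in the cross-effect decomposition \eqref{eq-decomp-pol} pair trivially with a polynomial functor of bounded degree. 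Concretely, one should argue that for bifunctors polynomial in the second variable, $\Ext^*_{k[\B\times\A\times\A]}(\widetilde B\text{-side},\widetilde C\text{-side})$ computed diagonally in the $\A\times\A$-block agrees with the non-diagonal one, by induction on the polynomial degree, using that cross-effects of the diagonal are built from $F_\sigma$ which, against a polynomial functor, contribute nothing by Proposition~\ref{pr-eq}.

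Next I would handle the ``antipolynomial'' direction using the excision theorem: restriction along $q=\pi_\I:\A\to\B$ is $\infty$-excisive by Theorem~\ref{thm-magique-csq1}, since $\B=\A/\I$ is $k$-trivial, so $\B(x,x)\otimes_\mathbb{Z}k=0$. More precisely, I want $\infty$-excision for the restriction $q\times\id_\A : \A\times\A\to\B\times\A$; by Proposition~\ref{pr-no-Ext} (or rather its $\Tor$-analogue) applied with the $\A$-variable fixed, the vanishing of $\bigoplus_{i>0}\Tor^{k[\A]}_i(q^*P^x_{\B^\op},q^*P^y_\B)$ from Theorem~\ref{thm-magique-csq1} upgrades to the vanishing of the corresponding $\Tor$-groups on $\B\times\A$ against standard projectives of the form $P^x_{\B^\op}\boxtimes P^a_{\A^\op}$, which gives $\infty$-excision for $(q\times\id)^*$ by the characterization in Definition-Proposition~\ref{cor-memechose2}(3). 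Chaining: $\Ext^\bullet_{k[\A\times\A]}(B,C) = \Ext^\bullet_{k[\A\times\A]}((q\times\id)^*\widetilde B,(q\times\id)^*\widetilde C) \cong \Ext^\bullet_{k[\B\times\A]}(\widetilde B,\widetilde C) \cong \Ext^\bullet_{k[\B\times\A\times\A]}(\text{via }\id_\B\times\Delta) \cong \Ext^\bullet_{k[\A]}(\Delta^*B,\Delta^*C)$, where the last step re-applies $\infty$-excision of $(q\times\id_\A)^*$ now in the form $\A\to\B$ composed with the diagonal, i.e.\ one checks $\Delta^*B=(q\times\id)^*\widetilde B$ restricted diagonally equals the restriction of $(\id_\B\times\Delta)^*$-image along $q$ on the remaining variable. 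The $\Tor$-statement follows either symmetrically or by duality via Proposition~\ref{pr-memechose} (noting $D_M$ of a bifunctor of AP-type is again of AP-type, since duality preserves both polynomiality, by Section~\ref{subsec-pol}, and antipolynomiality, as it is a composition with $\pi_\I$).

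The main obstacle I anticipate is the polynomial-direction step: making precise that restriction along the diagonal on the $\A\times\A$-block is an isomorphism on $\Ext$ and $\Tor$ between functors that are polynomial only in one of the two diagonalized variables — because the \emph{first} variable is antipolynomial, not polynomial, the naive application of Proposition~\ref{pr-eq} does not immediately apply, and one must genuinely interleave the excision input (collapsing the antipolynomial variable to $\B$, which is $\Hom$-finite) with Pirashvili's vanishing for the polynomial variable. The cleanest route is probably to do excision \emph{first}, passing to $\B\times\A$ where the first variable now lives over a $k$-trivial category, and only \emph{then} run the diagonal comparison, at which point the relevant cross-effects decompose compatibly and Proposition~\ref{pr-eq} together with Proposition~\ref{pr-sum-diagonal} closes the argument; one still has to verify that the bifunctor on $\B\times\A\times\A$ obtained is of the right AP-type (antipolynomial in the $\B$-slot in the trivial sense, polynomial in both $\A$-slots) for Proposition~\ref{pr-no-Ext}-style dévissage to apply.
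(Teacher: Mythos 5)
Your proposal has two genuine gaps, and your ``main obstacle'' paragraph correctly locates the second one but underestimates how serious it is.

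\textbf{Gap 1: no single cotrivial ideal.} You assert that each of $B$, $C$, $B'$ ``factors through $\pi_\I\times\id$ for a suitable $k$-cotrivial ideal $\I$'' and that intersecting three such ideals gives a common one. But the definition of AP-type only says that for \emph{each fixed} $x$ the functor $B(-,x)$ is antipolynomial, so the ideal may depend on $x$; there are infinitely many objects, and an infinite intersection of $k$-cotrivial ideals need not be $k$-cotrivial. This is precisely why the paper proves Lemma~\ref{lm-exhaust} (filtered union $B=\bigcup_{\I,d}B_{\I,d}$) and Lemma~\ref{lm-resolution} (resolution by direct sums of \emph{special} AP-type bifunctors, each summand allowed its own $\I$ and $d$). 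Your reduction step skips the issue that forces that machinery.

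\textbf{Gap 2: the mixed diagonal is not a diagonal.} After you excise along $q\times\id:\A\times\A\to\B\times\A$, what remains to prove is $\Ext^\bullet_{k[\B\times\A]}(\widetilde B,\widetilde C)\simeq\Ext^\bullet_{k[\A]}(\Delta_q^*\widetilde B,\Delta_q^*\widetilde C)$ where $\Delta_q:\A\to\B\times\A$, $x\mapsto(q(x),x)$. This $\Delta_q$ is neither full nor essentially surjective (so Theorem~\ref{thm-magique-csq1} does not apply), and it is \emph{not} the diagonal of an additive category (so Proposition~\ref{pr-sum-diagonal} does not apply). The only natural factorization is $\Delta_q=(q\times\id)\circ\Delta_\A$, and using it sends you straight back to $\res^{\Delta_\A}$ on $\A\times\A$, which is the original statement --- your chain is circular. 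The insertion of $\B\times\A\times\A$ via $\id_\B\times\Delta$ doesn't help because there is no trifunctor there to restrict. In contrast, the paper stays on $\A\times\A$, where $\Delta_\A\dashv\Sigma_\A\dashv\Delta_\A$ is available, applies Proposition~\ref{pr-sum-diagonal} to the epimorphism $\pi:\Sigma^*(A\otimes F)\to B$, decomposes $\Sigma^*(A\otimes F)\simeq B\oplus X\oplus Y$ via the cross-effect formulas for $A(x\oplus y)$ and $F(x\oplus y)$, and kills $X\oplus Y$ against $C$ by Lemma~\ref{lm-annul} (which \emph{does} blend excision input via Lemma~\ref{lm-vanish-pol} with Pirashvili's vanishing Proposition~\ref{pr-eq}, but packaged as a vanishing statement, not as a chain of category-level restriction isomorphisms). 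Your intuition that excision and polynomial vanishing must be ``interleaved'' is right; the specific interleaving you propose doesn't close.
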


\subsection{Resolutions of bifunctors of AP-type}
Given an ideal $\I$ of $\A$ and a positive integer $d$, we denote by $\C_{\I,d}$ the full subcategory of $k[\A\times\A]\Md$ whose objects are the bifunctors $B$ such that:
\begin{enumerate}[i)]
\item $B$ factors through $\pi_\I\times\mathrm{id}:\A\times\A\to (\A/\I)\times \A$, and
\item for all $x$, the functor $y\mapsto B(x,y)$ is polynomial of degree less than $d$. 
\end{enumerate}
The subcategory $\C_{\I,d}$ of $k[\A\times\A]\Md$ is stable under limits and colimits. Stability under colimits ensures that any object $B$ of $k[\A\times\A]\Md$ has a largest subobject $B_{\I,d}$ belonging to $\C_{\I,d}$. 

\begin{lm}\label{lm-exhaust}
If $B$ is a bifunctor of AP-type then 
$B=\bigcup_{\I,d} B_{\I,d}$, where $\I$ runs over the set of $k$-cotrivial ideals of $\A$ and $d$ runs over the set of positive integers. 
\end{lm}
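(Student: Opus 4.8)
The plan is to prove the non-trivial inclusion $B\subseteq\bigcup_{\I,d}B_{\I,d}$ by showing that for every object $(x_0,y_0)$ of $\A\times\A$ and every $b\in B(x_0,y_0)$ there is a pair $(\I,d)$ with $b\in B_{\I,d}(x_0,y_0)$. The device is to work with the sub-bifunctor $B^b\subseteq B$ generated by $b$, i.e.\ the image of the Yoneda map $P^{(x_0,y_0)}\to B$ classifying $b$, so that $B^b(x,y)$ is the $k$-linear span of the elements $B(f,g)(b)$ with $f\in\A(x_0,x)$ and $g\in\A(y_0,y)$. Since $B_{\I,d}$ is by construction the \emph{largest} subobject of $B$ lying in $\C_{\I,d}$, it suffices to exhibit $(\I,d)$ with $B^b\in\C_{\I,d}$; then $B^b\subseteq B_{\I,d}$ and in particular $b\in B_{\I,d}(x_0,y_0)$. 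I will take $d$ to be any integer such that the polynomial functor $B(x_0,-)$ has degree $<d$, and $\I$ to be any $k$-cotrivial ideal through which the antipolynomial functor $B(-,y_0)$ factors; both exist by the AP-type hypothesis applied at $x_0$ and at $y_0$. (For the statement as phrased, one also notes that the family $(B_{\I,d})$ is filtered: $\C_{\I_1,d_1}$ and $\C_{\I_2,d_2}$ are both contained in $\C_{\I_1\cap\I_2,\max(d_1,d_2)}$ — a functor factoring through $\pi_{\I_i}$ also factors through $\pi_{\I_1\cap\I_2}$ since $\I_1\cap\I_2\subseteq\I_i$ — and $\I_1\cap\I_2$ is again $k$-cotrivial, because $\A/(\I_1\cap\I_2)$ embeds on $\Hom$-sets into the product of the $k$-trivial categories $\A/\I_1$ and $\A/\I_2$, and a subgroup of a finite abelian group annihilated by an integer invertible in $k$ inherits both properties.)

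For condition (ii) of the definition of $\C_{\I,d}$, fix an object $x$. Each $f\in\A(x_0,x)$ yields a natural transformation $B(f,-)\colon B(x_0,-)\to B(x,-)$; let $G_f$ be its image, which is a subfunctor of $B(x,-)$ and a quotient of $B(x_0,-)$. Since cross-effects are exact and commute with direct sums, $\Cr_d G_f$ is a quotient of $\Cr_d B(x_0,-)=0$, so $G_f$ has degree $<d$, and then the epimorphism $\bigoplus_f G_f\twoheadrightarrow\sum_f G_f$ shows that $\sum_f G_f$ still has degree $<d$. The identity $B(f,g)(b)=B(\id_x,g)\bigl(B(f,\id_{y_0})(b)\bigr)$, together with $B(f,\id_{y_0})(b)\in G_f(y_0)$ and the fact that $G_f$ is a subfunctor of $B(x,-)$, gives $B^b(x,-)\subseteq\sum_f G_f$, whence $B^b(x,-)$ has degree $<d$. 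As $x$ was arbitrary, condition (ii) holds.

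For condition (i), I must check that $B^b$ factors through $\pi_\I\times\id\colon\A\times\A\to(\A/\I)\times\A$, equivalently that $B^b(f_1,g)=B^b(f_2,g)$ whenever $f_1\equiv f_2\pmod{\I}$. I expect this to be the delicate point: because $B^b$ is a non-additive functor, this is not a formal consequence of ``$\I$ being killed'', and moreover $B$ itself generally does \emph{not} factor through any single $\pi_\I\times\id$, since the $k$-cotrivial ideal through which $B(-,y)$ factors may vary with $y$. The argument is to evaluate on a spanning element $B(u,v)(b)$ (with $u\in\A(x_0,x)$, $v\in\A(y_0,y)$): one has $B(f_i,g)\bigl(B(u,v)(b)\bigr)=B(f_iu,gv)(b)=B(\id_{x'},gv)\bigl(B(f_iu,\id_{y_0})(b)\bigr)$, and since $f_1u\equiv f_2u\pmod{\I}$ (an ideal absorbs composites) and $B(-,y_0)$ factors through $\pi_\I$, the inner terms $B(f_iu,\id_{y_0})(b)$ coincide for $i=1,2$. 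Hence $B^b(f_1,g)=B^b(f_2,g)$, so $B^b\in\C_{\I,d}$, and the lemma follows.
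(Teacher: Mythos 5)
Your argument is correct, but it takes a mildly different route from the paper's. You fix a point $(x_0,y_0)$, choose $d$ and $\I$ from the AP-type hypothesis at that point (exactly as the paper does), but then you pass to the sub-bifunctor $B^b$ generated by a single element $b$ and verify directly that $B^b\in\C_{\I,d}$, checking the degree bound via the images $G_f$ and the factorization through $\pi_\I\times\id$ by an explicit computation on spanning elements. The paper proceeds at the level of the whole bifunctor: it constructs two canonical sub-bifunctors of $B$ — $B_d$, where $B_d(a,-)$ is the largest subfunctor of $B(a,-)$ of degree $<d$, and $B_\I$, where $B_\I(-,b)$ is the largest subfunctor of $B(-,b)$ factoring through $\A/\I$ — observes that they really do assemble into sub-bifunctors (the maximality makes them stable under $B(f,-)$ and $B(-,g)$), that $B_\I\cap B_d$ visibly lies in $\C_{\I,d}$, and that both coincide with $B$ at the chosen $(x,y)$. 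The paper's route buys a slightly slicker membership check (it is nearly forced by maximality), and shows in one stroke that $B_{\I,d}(x,y)=B(x,y)$ for the chosen $(\I,d)$; your route is more elementary and hands-on, and as you note also yields, as a side remark, that the system $(B_{\I,d})$ is filtered — a fact the paper does not need for the statement as phrased, since it is only a union. Both are valid.
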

\begin{proof}
We fix two objects $x,y$ of $\A$. Let $d$ be the degree of $t\mapsto B(x,t)$ and let $\I$ be a $k$-cotrivial ideal such that $s\mapsto B(s,y)$ factors through $\A/\I$. To prove the lemma, it suffices to show that the inclusion $B_{\I,d}\hookrightarrow B$ induces an equality $B_{\I,d}(x,y)=B(x,y)$.

Let $B_{d}(a,-)$ be the largest subfunctor of $B(a,-)$ of degree less than $d$. Any map $f:a\to b$ induces a map $B_{d}(a,-)\to B_{d}(b,-)$, so that the functors $B_{d}(a,-)$ assemble into a bifunctor $B_{d}:\A\times\A\to k\Md$ which is a subfunctor of $B$, polynomial of degree less of equal to $d$ with respect to its first variable. By construction $B_{d}(x,y)=B(x,y)$. 
Similarly, let $B_\I(-,b)$ be the largest subfunctor of $B(-,b)$ factorizing through $\A/\I$. These functors assemble into a bifunctor $B_\I:\A\times\A\to k\Md$  factorizing through $\A/\I\times\A$. By construction $B_\I(x,y)=B(x,y)$. 
Since $B_{\I}\cap B_d\subset B_{\I,d}$ we finally obtain that $B_{\I,d}(x,y)=B(x,y)$.
\end{proof}

An object $B$ of $k[\A\times\A]\Md$ is \emph{of special-AP-type} if there is an object $z$ of $\A$, a $k$-cotrivial ideal $\I$ and a polynomial functor $F$ in $k[\A]\Md$ such that 
\[B(x,y)= k[\A/\I(z,x)]\otimes F(y)\;.\]

\begin{lm}\label{lm-resolution}
Every bifunctor $B$ of AP-type has a resolution $Q\to B$ whose terms are direct sums of bifunctors of special-AP-type.
\end{lm}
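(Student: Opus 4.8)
The plan is to build the resolution by hand, degree by degree, by repeatedly covering a bifunctor of AP-type by a direct sum of bifunctors of special-AP-type and checking that the kernel of the covering map is again of AP-type. The key point is that bifunctors of AP-type form a class closed enough to support such an inductive construction: a subbifunctor of a bifunctor of AP-type is again of AP-type (a subfunctor of a polynomial functor is polynomial, and a subfunctor of a functor factoring through $\A/\I$ factors through $\A/\I$), and by Lemma \ref{lm-exhaust} any bifunctor $B$ of AP-type is the filtered union of its subbifunctors $B_{\I,d}$, each of which is polynomial of bounded degree in the second variable and factors through $(\A/\I)\times\A$ in the first. So it suffices to show that any bifunctor of AP-type admits an epimorphism onto it from a direct sum of special-AP-type bifunctors; iterating and taking kernels then produces the desired resolution.

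First I would reduce to producing, for each object $B$ of $\C_{\I,d}$ — i.e. a bifunctor factoring through $\pi_\I\times\id$ and polynomial of degree $<d$ in the second variable — a surjection onto $B$ from a direct sum of bifunctors of the form $k[\A/\I(z,-)]\otimes F$ with $F$ polynomial. The natural candidate cover is indexed by pairs $(z,v)$ with $z$ an object of $\A$ and $v$ an element of $B(z,-)$ (or rather of a set of generators of $B(z,-)$ as a polynomial functor): for each such pair one takes the standard projective $P^z_{\A/\I} = k[\A/\I(z,-)]$ in the first variable, and in the second variable the subfunctor of $B(z,-)$ generated by $v$ inside the abelian category of polynomial functors of degree $<d$ (which is itself polynomial of degree $<d$). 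The Yoneda isomorphism \eqref{eq-Yoneda} for $k[\A/\I(z,-)]$ together with the universal property of this generation gives a map $k[\A/\I(z,-)]\otimes F_{z,v}\to B$ sending $\id_z\otimes v$ to $v$; summing over all $(z,v)$ produces an epimorphism, since the images of the $v$'s generate $B$ as a bifunctor of AP-type (again using Lemma \ref{lm-exhaust} to see that $B$ is generated by the values $B(z,-)$ ranging over all $z$ and all $\I$, $d$).

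The main obstacle I anticipate is the bookkeeping needed to ensure that the second-variable factor $F_{z,v}$ is genuinely a \emph{polynomial} functor (of degree $<d$) and not just a reduced or arbitrary functor, and that the construction is natural enough that the kernel stays in $\C_{\I',d'}$ for suitable $\I'$, $d'$ — one must check that the kernel of the covering map remains of AP-type, which follows from closure of AP-type under subobjects but requires verifying that the degree and the cotriviality are preserved (they are, since passing to a subbifunctor only decreases the polynomial degree in the second variable and only enlarges the ideal through which the first variable factors). A secondary point is that when $\A$ is merely essentially small, one should index the cover by a set of representatives rather than a proper class; this is harmless but should be stated. Once the covering step is in place, the resolution is obtained by the usual process: set $Q_0 = \bigoplus k[\A/\I(z,-)]\otimes F_{z,v}$, let $B_1 = \ker(Q_0\to B)$, which is again of AP-type, cover it in the same way to get $Q_1$, and continue; the resulting complex $\cdots\to Q_1\to Q_0\to B\to 0$ is exact by construction and each $Q_i$ is a direct sum of bifunctors of special-AP-type, as required.
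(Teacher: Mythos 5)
Your overall strategy --- reduce to covering each $B_{\I,d}$ by a direct sum of bifunctors of special-AP-type via Lemma \ref{lm-exhaust}, then iterate to build the resolution --- is the same as the paper's. (The paper obtains the cover more directly from the standard resolution, namely $\bigoplus_z k[\A/\I(z,-)]\boxtimes B_{\I,d}(z,-)\twoheadrightarrow B_{\I,d}$ with the sum over a set of representatives of isomorphism classes of objects; there is no need for your auxiliary generated subfunctors $F_{z,v}$, one may take the whole $B_{\I,d}(z,-)$ as the polynomial factor.) However, the iteration step as you state it has a genuine gap. You assert that $B_1=\ker(Q_0\to B)$ ``is again of AP-type'' by closure of AP-type under subobjects, but $B_1$ is a subobject of $Q_0$, not of $B$, and $Q_0$ is in general \emph{not} of AP-type: for fixed $x$ the functor $y\mapsto Q_0(x,y)$ is a direct sum of polynomial functors of unbounded degree (as $d$ ranges over all positive integers in the index of your sum), hence not polynomial, and for fixed $y$ the functor $x\mapsto Q_0(x,y)$ does not factor through any single $k$-cotrivial quotient of $\A$. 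Moreover this really does contaminate $B_1$: cross-effects are exact, so once $N$ exceeds the degree of $B(x,-)$ one gets $\Cr_N(B_1(x,-))\simeq\Cr_N(Q_0(x,-))\ne 0$, so $B_1(x,-)$ is not polynomial either. Thus ``cover it in the same way'' is not licensed by what you have proved.

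The repair is to iterate with a weaker invariant: take the class of bifunctors which are \emph{subfunctors of a direct sum of bifunctors of special-AP-type} (equivalently, filtered unions of sub-bifunctors of AP-type). This class contains every bifunctor of AP-type by Lemma \ref{lm-exhaust}, it manifestly contains the kernel of any covering map of the kind you construct, and it still admits such covers: any $X$ in this class is the filtered union of its intersections with the finite sub-direct-sums of the ambient $\bigoplus_\alpha S_\alpha$, each such intersection is a sub-bifunctor of a finite direct sum of bifunctors of AP-type, and a finite direct sum of bifunctors of AP-type is again of AP-type (take the maximum of the polynomial degrees and the intersection of the $k$-cotrivial ideals; a finite intersection of $k$-cotrivial ideals is $k$-cotrivial because $\A/(\I_1\cap\I_2)(x,y)$ embeds in $\A/\I_1(x,y)\times\A/\I_2(x,y)$). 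So the covering step applies to each piece and one sums up. With this adjustment your construction closes.
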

\begin{proof}
It suffices to prove that every $B_{\I,d}$, is a quotient of a direct sum of bifunctors of special-AP-type. By definition $B_{\I,d}=(\pi_\I\times\mathrm{id}_\A)^*B'$ for some bifunctor $B':\A/\I\times\A\to k\Md$ such that each $B'_z(-):=B'(z,-)$ is polynomial of degree less than $d$. The standard resolution of $B'$ \cite[section 17]{Mi72} yields an epimorphism $\bigoplus_{z} h_{k[\A/\I]}^z\boxtimes B'_z\to B'$, where the sum is indexed by a set of representatives $z$ of isomorphism classes of objects of $\A/\I$. The result follows by restricting this epimorphism along $\pi_\I\times\mathrm{id}_\A$.
\end{proof}

\subsection{Proof of the separation theorem \ref{thm-AP-type-bis}} The proof will rely on the following vanishing lemma.

\begin{lm}\label{lm-annul}
Let $A$, $B$, and $F$ be three objects of $k[\A]\Md$, with $A$ reduced.
If one object of the pair $\{A,B\}$ is polynomial and the other is antipolynomial, then $\Ext^*_{k[\A]}(A\otimes F,B)= 0$.
\end{lm}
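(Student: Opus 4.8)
The plan is to reduce the statement to the equivalent characterizations of polynomiality given in Proposition \ref{pr-eq}, exploiting the symmetry in the pair $\{A,B\}$ and the fact that $\Ext$ and $\Tor$ are interchanged by duality (Proposition \ref{pr-memechose}). First, I would dispose of the roles: we must handle two cases, namely ($A$ polynomial, $B$ antipolynomial) and ($A$ antipolynomial, $B$ polynomial). In the first case, $A\otimes F$ is polynomial in $A$ but $F$ is arbitrary, so I expect to need a supplementary argument; in the second case $A$ is reduced and antipolynomial, $A\otimes F$ is a tensor product of (reduced) functors, and $B$ is polynomial, which is exactly the shape appearing in Proposition \ref{pr-eq}(iii) — once we know $A\otimes F$ contains enough ``reduced tensor factors''. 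The key observation is that if $A$ is reduced and antipolynomial, then $A$ factors through an additive quotient $\pi_\I:\A\to\A/\I$ with $\I$ $k$-cotrivial, so that $A$ is a retract of $k[G]^\red$-type data; more usefully, one can invoke Lemma \ref{lm-vanish-pol} with a suitable $G$ (the Hom-groups of the $k$-trivial quotient category, which are finite with $k\otimes_{\mathbb Z}G=0$), taking $H=F$, to conclude $\Ext^*_{k[\A]}(A\otimes F,B)=0$ directly when $B$ is polynomial.

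Concretely, for the case ``$A$ antipolynomial, $B$ polynomial'': since $A$ is reduced and antipolynomial it factors through $\pi_\I$, hence is a subquotient (indeed a direct summand, after splitting off the value at $0$) of the $k$-linearization $k[\I$-cotrivial Hom data$]$. Applying Lemma \ref{lm-vanish-pol} to the functor $G(x)=(\A/\I)(z,x)$ for appropriate $z$ — which satisfies $k\otimes_{\mathbb Z}G(x)=0$ and $\Tor^{\mathbb Z}_1(k,G(x))=0$ since $\A/\I$ is $k$-trivial (finite Hom-groups with $k\otimes_{\mathbb Z}G(x)=0$, so $G(x)$ is finite of order invertible in the relevant sense, killing $\Tor_1^{\mathbb Z}(k,-)$ as well) — gives $\Ext^*_{k[\A]}(k[G]^\red\otimes F,B)=0$ for every polynomial $B$. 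Since $A$ is a direct summand of such a $k[G]^\red$ (or more carefully, $A$ embeds as a subfunctor and one resolves $A$ by such functors, running a $\delta$-functor/dévissage argument), the vanishing passes to $A\otimes F$. For the case ``$A$ polynomial, $B$ antipolynomial'': I would pass to the dual. By Proposition \ref{pr-memechose}, $\Ext^*_{k[\A]}(A\otimes F,B)$ is the dual of $\Tor_*$ between the corresponding functors on $\A^\op$; under duality a polynomial functor stays polynomial and an antipolynomial functor stays antipolynomial (antipolynomiality is a factorization condition through $\A/\I$, unaffected by passing to opposite categories with the same $k$-cotrivial ideal), and reduced functors stay reduced. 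So this case reduces to a $\Tor$-vanishing statement of exactly the shape handled by the first case — one may either redo the argument with the $\Tor$-version of Lemma \ref{lm-vanish-pol}, or better, observe that the contravariant analogue of Lemma \ref{lm-vanish-pol} holds by the same bar-resolution argument, since Proposition \ref{pr-eq} already records the $\Tor$-version of Pirashvili's vanishing.

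The step I expect to be the main obstacle is the passage from ``Lemma \ref{lm-vanish-pol} kills $\Ext(k[G]^\red\otimes H, F)$'' to ``$\Ext(A\otimes F, B)=0$ for an arbitrary reduced antipolynomial $A$''. One cannot simply say $A$ is a direct summand of $k[G]^\red$: rather, $A$ is an arbitrary reduced functor factoring through $\A/\I$, so one resolves $A$ (or rather the identity functor on $\A/\I$-modules) by standard projectives $k[(\A/\I)(z,-)]$, whose reduced parts are exactly of the $k[G]^\red$ form; tensoring the resolution with $F$ and using that $\otimes F$ is exact on the relevant category, a hyperhomology spectral sequence collapses the higher $\Ext$ against the polynomial functor $B$ to zero. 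Making this bookkeeping clean — in particular checking that the terms of the resolution, after tensoring with $F$ and taking reduced parts, satisfy the hypotheses of Lemma \ref{lm-vanish-pol}, and that the constant parts contribute nothing by the $\Ext$-vanishing between constant and reduced functors noted in Section \ref{subsec-pol} — is where the care is needed. Everything else is a formal consequence of Propositions \ref{pr-eq} and \ref{pr-memechose}.
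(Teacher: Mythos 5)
Your case split matches the paper's, your choice of Lemma~\ref{lm-vanish-pol} as the engine for the case ``$A$ antipolynomial, $B$ polynomial'' is correct, and your idea of passing to the dual via Proposition~\ref{pr-memechose} for the other case (after coresolving $B$ by standard injectives $D_MQ$) is exactly what the paper does. You have also correctly identified the right place to apply Lemma~\ref{lm-vanish-pol}, namely $G = (\A/\I)(z,-)$, and correctly checked that the $k$-cotriviality of $\I$ kills both $k\otimes_{\mathbb Z}G(x)$ and $\Tor_1^{\mathbb Z}(k,G(x))$.

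The gap is exactly where you suspected, and it is fatal in the generality of the statement: ``tensoring the resolution with $F$ and using that $\otimes F$ is exact on the relevant category'' is not available, because $\otimes F$ is only \emph{right} exact over a general commutative ring $k$. Concretely, if $Q_\bullet \to A$ is a resolution by direct sums of $k[(\A/\I)(z,-)]^{\red}$, then $Q_\bullet\otimes F$ has $\pi_0 = A\otimes F$ but its higher homology at $x$ is $\Tor_*^k(A(x),F(x))$, which need not vanish and is not of a shape to which Lemma~\ref{lm-vanish-pol} applies. The hyper-$\Ext$ spectral sequence therefore only tells you that the \emph{total} hyper-$\Ext$ of the complex $Q_\bullet\otimes F$ against $B$ vanishes; the second spectral sequence then does not force $\Ext^*(A\otimes F,B)=0$, because the $E_2^{0,*}$ column can be killed by differentials coming from the higher-$\Tor$ columns. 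Your route works if $k$ is a field, but Theorem~\ref{thm-AP-type-bis} (and hence this lemma) is stated over an arbitrary commutative ring.

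The paper avoids this entirely by stripping $F$ off \emph{before} any dévissage: it rewrites $\Ext^*_{k[\A]}(A\otimes F,B)$ as $\Ext^*_{k[\A\times\A]}(A\boxtimes F,\Sigma^*B)$ via the sum/diagonal adjunction (Proposition~\ref{pr-sum-diagonal}), then invokes the bifunctor vanishing criterion (Proposition~\ref{pr-no-Ext}) to reduce the claim to $\Ext^*_{k[\A]}(A,B_y)=0$ with $B_y(x)=B(x\oplus y)$, where no $F$ remains. Only then does it run the resolution argument and apply Lemma~\ref{lm-vanish-pol} (with $H=k$). This first reduction is the missing idea in your proposal; without it you cannot escape the non-exactness of $\otimes F$, whereas with it the problem never arises. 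I recommend incorporating Propositions~\ref{pr-sum-diagonal} and~\ref{pr-no-Ext} as the opening move, and then your Case~1/Case~2 plan goes through essentially as you sketched it.
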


\begin{proof}
Let $A\boxtimes F$ denote the bifunctor such that $(A\boxtimes F)(x,y)=A(x)\otimes F(y)$. 
By proposition \ref{pr-sum-diagonal}, $\Ext^*_{k[\A]}(A\otimes F,B)$ is isomorphic to 
$\Ext^*_{k[\A\times\A]}(A\boxtimes F,\Sigma^*B)$. Hence if we let $B_y(x)=B(x\oplus y)$, then by proposition \ref{pr-no-Ext},  it suffices to show that for all $y$, we have $\Ext^*_{k[\A]}(A,B_y)= 0$. Note that $B_y$ is polynomial/antipolynomial if and only if $B$ is. Thus it suffices to show that when $A$ is reduced,
$$\Ext^*_{k[\A]}(A,B)=0\qquad(*)$$
whenever one argument of the $\Ext$ is polynomial and the other is antipolynomial.
There are two cases. 

{\bf Case 1: $A$ antipolynomial and $B$ polynomial.} Since $A$ is reduced antipolynomial, there is a $k$-cotrivial ideal $\I$ and a reduced functor $A':\A/\I\to k\Md$ such that $A=\pi_\I^*A'$. The reduced functor $A'$ has a projective resolution by direct sums of reduced standard projectives, hence $A$ has a resolution by direct sums of functors of the form 
$k[\A/\I(z,-)]^\red$ for some $z$ in $\A$. A $\delta$-functor argument then shows that in order to prove the cancellation $(*)$, it suffices to prove it for the functors $k[\A/\I(z,-)]^\red$, which follows from lemma \ref{lm-vanish-pol}.

{\bf Case 2: $A$ polynomial and $B$ antipolynomial.} There is a $k$-cotrivial ideal $\I$ and a functor $B':\A/\I\to k\Md$ such that $B=\pi_\I^*B'$. The functor $B'$ has an injective coresolution by products of standard injectives, hence $B$ has a coresolution by products of functors of the form $D_MQ$ with $Q=k[\A/\I(-,z)]$ and $M$ an injective $k$-module. A $\delta$-functor argument shows that it suffices to prove the cancellation $(*)$ when $B=D_MQ$ with $Q=k[\A/\I(-,z)]$. Proposition \ref{pr-memechose} yields a graded isomorphism:
$$\Ext^*_{k[\A]}(A,D_MQ)\simeq \Ext^*_{k[\A^{\op}]}(Q,D_MA)$$
and since $A$ is reduced, $D_MA$ is reduced and the right hand side of the isomorphism is equal to $\Ext^*_{k[\A^{\op}]}(Q^\red,D_MA)$. Now $Q^\red$ is a reduced antipolynomial functor and $D_MA$ is a polynomial functor, hence the latter $\Ext$ is zero as a consequence of the cancellation proved in case 1.
\end{proof}

\begin{proof}[Proof of theorem \ref{thm-AP-type}]
If $B$ is a bifunctor of AP-type, then its dual $D_MB$ is also a bifunctor of AP-type. Therefore, by proposition \ref{pr-memechose} it suffices to prove the $\Ext$-isomorphism of theorem \ref{thm-AP-type}. 
Moreover, lemma \ref{lm-resolution} provides a resolution $Q\to B$ of $B$ whose terms $Q_p$ are direct sums of bifunctors of special AP-type. Therefore, a standard $\delta$-functor argument shows that it suffices to prove theorem \ref{thm-AP-type} when $B$ is a bifunctor of special AP-type.

So we now assume that $B(x,y)=A(x)\otimes F(y)$ with $F$ polynomial of degree less than $d$ and $A=k[\A/\I(z,-)]$ for a $k$-cotrivial ideal $\I$ and an object $z$ of $\A$. 
We have a commutative diagram
\[
\begin{tikzcd}[column sep=large]
\Ext^\bullet_{k[\A\times\A]}(B,C)\ar{d}[swap]{\Ext^\bullet_{k[\A\times\A]}(\pi,C)}\ar{r}{\res^\Delta} & \Ext^\bullet_{k[\A]}(A\otimes F,\Delta^*C)\\
\Ext^\bullet_{k[\A\times\A]}(\Sigma^*(A\otimes F),C)\ar{ur}{\alpha}[swap]{\simeq}
\end{tikzcd}
\]
where $\alpha$ is the isomorphism of proposition \ref{pr-sum-diagonal} and $\pi:\Sigma^*(A\otimes F)\to B$ is the epimorphism of bifunctors induced by the canonical projections of $p_x:x\oplus y\to x$ and  $p_y:x\oplus y\to y$:
$$\pi_{x,y}:=A(p_x)\otimes F(p_y): A(x\oplus y)\otimes F(x\oplus y)\to A(x)\otimes F(y)\;.$$
Hence, in order to prove that $\res^\Delta$ is an isomorphism, it suffices to prove that $\Ext^\bullet_{k[\A\times\A]}(\pi,C)$ is an isomorphism. Now we use the natural decompositions:
\begin{align*}
A(x\oplus y)=A(x)\oplus A(x)\otimes A^{\red}(y)\;,&& F(x\oplus y)=F(x)\oplus \underbrace{F(y)\oplus \Cr_2F(x,y)}_{=G(x,y)}\;.
\end{align*}
They induce a decomposition
$\Sigma^*(A\otimes F)\simeq  B \oplus X \oplus Y$
where the bifunctors $X$ and $Y$ are defined by:
\begin{align*}
&X(x,y):= A(y)\otimes G(x,y)\;,&&Y(x,y):= A(x)\otimes A^\red(y)\otimes F(x\oplus y)\;.
\end{align*}
Moreover, the map $\pi$ identifies though this decomposition with the canonical projection on the summand $B$. Thus, in order to prove that $\res^\Delta$ is an isomorphism, it suffices to prove that there is no nonzero $\Ext$ between $X\oplus Y$ and $C$.  

But for all objects $y$ and $y'$, there is no nonzero $\Ext$ between the functors $X(-,y)$ and $C(-,y')$ by lemma \ref{lm-annul} since $C(-,y')$ is antipolynomial and $X(-,y)$ is reduced and polynomial. Hence there is no nonzero $\Ext$ between $X$ and $C$ by proposition \ref{pr-no-Ext}. Similarly, there is no nonzero $\Ext$ between $Y$ and $C$,  whence the result.
\end{proof}

\section{Appendix: simplicial techniques}\label{sec-app}
In this appendix, we gather the basic definitions and the results on simplicial objects that we need in the proof of the excision theorem \ref{thm-magique-general}. The reader may consult \cite[Chap 8]{Weibel} and \cite{GoerssJardine} as general references on simplicial objects. Besides our desire to provide the reader an easy access to these techniques, the main raison d'\^etre of this appendix is that we have not been able to find a reference in the literature proving the local Hurewicz theorem for arbitrary rings $k$ (the classical algebraic topology litterature being essentially focused on the cases where $k$ is the ring of integers or a prime field).
\subsection{Simplicial resolutions in an abelian category}
Recall that if $Q$ is a simplicial object in an abelian category $\M$, its \emph{homotopy groups} $\pi_*Q$ are the homology groups of the chain complex associated to $Q$. (If $\M=k\Md$, it can be proved that these homotopy groups coincide with the homotopy groups of the underlying simplicial set of $Q$).
We can view every object $X$ of $\M$ as a constant simplicial object of $X$. A \emph{simplicial resolution} of $X$ is a morphism of simplicial objects $Q\to X$ which induces an isomorphism on the level of homotopy groups. The simplicial resolution is called projective if the simplicial object $Q$ is degreewise projective. The Dold-Kan equivalence \cite[Section 8.4]{Weibel} ensures that if $\M$ has enough projectives, then every object has a simplicial projective resolution.

\subsection{Eilenberg MacLane spaces and Hurewicz theorems}
For all abelian groups $A$ and all $n\ge 0$, we denote by $K(A,n)$ any simplicial free abelian group such that $\pi_iK(A,n)=0$ for $i\ne n$ and $\pi_n K(A,n)\simeq A$. Such a simplicial free abelian group is called an \emph{Eilenberg-MacLane space} and is unique up to homotopy equivalence. The study of simplicial abelian groups often reduces to that of Eilenberg-MacLane spaces by the following classical lemma, see e.g. \cite[Prop 2.20]{GoerssJardine}. We impose that Eilenberg-MacLane spaces are degreewise free abelian groups by definition in order to have genuine maps rather than zig-zags in this lemma.
\begin{lm}\label{lm-decomp}
For all simplicial abelian groups $X$, there is a weak equivalence (unique up to homotopy)
\[\prod_{i\ge 0} K(\pi_i X,i)\xrightarrow{\simeq} X\;.\]
Moreover for all morphisms of simplicial abelian groups $f:X\to Y$, let $K(\pi_i f,i): K(\pi_i X,i)\to K(\pi_i Y,i)$ denote a lift of $\pi_i f: \pi_iX\to \pi_i Y$ to the level of the simplicial resolutions. Then the following diagram commutes up to homotopy:
\[
\begin{tikzcd}[column sep=huge]
\prod_{i\ge 0} K(\pi_i X,i)\ar{d}\ar{r}{\prod K(\pi_i f,i)}& \prod_{i\ge 0} K(\pi_i Y,i)\ar{d}\\
X\ar{r}{f}& Y
\end{tikzcd}\;.
\]
\end{lm}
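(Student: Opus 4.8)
The plan is to transport the statement across the Dold--Kan correspondence, which identifies $s\mathbf{Ab}$ with the category $\mathrm{Ch}_{\ge 0}(\mathbb{Z})$ of nonnegatively graded chain complexes of abelian groups, carries weak equivalences to quasi-isomorphisms, and carries a choice of $K(A,n)$ to a complex of free abelian groups concentrated in homological degrees $n$ and $n+1$ which, in the derived category $D_{\ge 0}(\mathbb{Z})$, is a model of $A[n]$. Thus it suffices to show: for every connective chain complex $C$ there is a quasi-isomorphism $\bigoplus_{i\ge 0}H_i(C)[i]\xrightarrow{\ \simeq\ }C$ that is the identity on homology; that it is unique up to chain homotopy; and that the construction is natural up to homotopy.

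For existence I would replace $C$, equivalently $X$, by a quasi-isomorphic complex $P_\bullet$ of free abelian groups --- for instance the normalized chain complex of a cofibrant replacement of $X$. The elementary fact that does the work is that a subgroup of a free abelian group is free: hence the cycles $Z_n=\ker\partial_n$ and the boundaries $B_n=\mathrm{im}\,\partial_{n+1}$ are free, and each short exact sequence $0\to Z_n\to P_n\to B_{n-1}\to 0$ splits because $B_{n-1}$ is projective. Choosing such splittings exhibits $P_\bullet$ as the direct sum of the two-term complexes $[\,B_i\hookrightarrow Z_i\,]$, placed in homological degrees $i+1$ and $i$; each of these is a free resolution of $H_i(C)$ concentrated in degrees $i+1,i$, i.e. a model of $K(\pi_iX,i)$. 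Reassembling the summands and applying the Dold--Kan denormalization --- which preserves degreewise freeness and, being a finite sum in each simplicial degree, turns the direct sum into a product --- yields the required weak equivalence $\prod_{i\ge 0}K(\pi_iX,i)\xrightarrow{\ \simeq\ }X$, by construction the identity on homotopy groups.

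For uniqueness and for the naturality clause I would use the Postnikov (good truncation) tower $\{\tau_{\le n}X\}_{n\ge 0}$, which is strictly functorial in $X$, together with the vanishing of its $k$-invariants: the obstruction $k_n\in\Hom_{D(\mathbb{Z})}(\tau_{\le n-1}X,\pi_n(X)[n+1])$ vanishes, since $\tau_{\le n-1}X$ has homology in degrees $\le n-1$ while $\pi_n(X)[n+1]$ is concentrated in degree $n+1$, so the only possibly nonzero contributions to this $\Hom$ are of the form $\Ext^{\ge 2}_{\mathbb{Z}}(-,-)$, all zero because $\mathbb{Z}$ has global dimension $1$. Hence each stage of the tower splits off its top homotopy group; performing the splittings inductively rebuilds the weak equivalence above, and chasing a morphism $f\colon X\to Y$ through the construction --- controlling at each stage the difference between the restriction of $f$ to $\tau_{\le n}$ and $\prod_{i\le n}K(\pi_if,i)$, a difference that is measured by groups built from $\Ext^{\le 1}_{\mathbb{Z}}$ of the homotopy groups --- gives the homotopy-commutativity of the comparison square.

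The step I expect to be the main obstacle is precisely this last one. The decomposition itself is, in essence, the statement that $\mathbb{Z}$ is a hereditary ring, and is formal; what takes care is organizing the homotopies that witness naturality, since the Postnikov splittings are canonical only up to an $\Ext^1_{\mathbb{Z}}$-indeterminacy that must be tracked carefully up the tower. The hypotheses that $K(A,n)$ is degreewise free and that $\mathbb{Z}$ has global dimension $1$ enter exactly at this point.
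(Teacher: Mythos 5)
Your existence argument is correct and, after translating through Dold--Kan, is the standard one: it hinges on $\mathbb{Z}$ being hereditary, so that a degreewise-free chain complex splits into its two-term pieces $[B_i\hookrightarrow Z_i]$. This is essentially the argument in Goerss--Jardine, which the paper cites in lieu of giving its own proof.

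The uniqueness and naturality clauses are another matter: your Postnikov sketch does not close, and in fact those clauses are false in the generality stated. You correctly locate the indeterminacy in $\Ext^1$-groups, but these are genuine obstructions that no amount of homotopy-tracking will remove. For connective complexes over $\mathbb{Z}$ there is a natural short exact sequence
\[
0\to \prod_i\Ext^1_{\mathbb{Z}}(H_iA,H_{i+1}B)\to\Hom_{D(\mathbb{Z})}(A,B)\to\prod_i\Hom_{\mathbb{Z}}(H_iA,H_iB)\to 0\;,
\]
whose kernel measures precisely the failure of a derived morphism to be determined by its effect on homology. Concretely, take $X=K(\mathbb{Z}/2,0)$, $Y=K(\mathbb{Z}/2,1)$, and let $f\colon X\to Y$ realize the nonzero class of $\Ext^1_{\mathbb{Z}}(\mathbb{Z}/2,\mathbb{Z}/2)\simeq\mathbb{Z}/2$. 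Then $\pi_if=0$ for all $i$, so every choice of $\prod K(\pi_if,i)$ is nullhomotopic while $f$ is not; since the vertical arrows of the square are homotopy equivalences, the square cannot commute up to homotopy. Uniqueness fails for the same reason: $K(\mathbb{Z}/2,0)\times K(\mathbb{Z}/2,1)$ admits two non-homotopic weak self-equivalences inducing the identity on $\pi_*$, differing by the nonzero $\Ext^1$-class. What \emph{is} true, and what the applications actually need, is that there is a homotopy-unique top map making the square commute, namely $\alpha_Y^{-1}\circ f\circ\alpha_X$ where $\alpha_X,\alpha_Y$ are the vertical weak equivalences; but in general it carries off-diagonal $\Ext^1$-components and is not of the form $\prod K(\pi_if,i)$, so the reduction in Proposition~\ref{pr-Hur-classical}\,(2) must also accommodate maps $K(A,i)\to K(B,i+1)$ --- or one proves the relative statement directly by factoring $f$ through a cofibration and invoking the absolute theorem for the cofiber.
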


If $X$ is a simplicial abelian group, the inclusion of simplicial sets $X\to \mathbb{Z}[X]$ induces a natural morphism of graded abelian groups
\[h_*:\pi_*X\to \pi_*\mathbb{Z}[X]\]
called the \emph{Hurewicz morphism}, and whose properties are described in the Hurewicz theorem. 
We recall that a simplicial abelian group $X$ is called \textit{$e$-connected} if $\pi_iX=0$ for $i\le e$, and that a morphism of simplicial abelian groups $f:X\to Y$ is $e$-connected if $\pi_i(f):\pi_iX\to \pi_iY$ is an isomorphism for $i<e$ and an epimorphism for $i=e$.  
\begin{pr}[Classical Hurewicz Theorems]\label{pr-Hur-classical}Let $e$ be a nonnegative integer. 
\begin{enumerate}
\item[(1)] (Absolute theorem)
For all simplicial abelian groups $X$,
the Hurewicz map $h_*$ is split injective. Moreover, if $X$ is $e$-connected then $h_i$ is an isomorphism for $i\le e+1$. 
\item[(2)] (Relative theorem)
Every $e$-connected morphism of simplicial abelian groups $f:X\to Y$ induces an $e$-connected morphism of simplicial $k$-modules $k[f]:k[X]\to k[Y]$ for all commutative rings $k$. 
\end{enumerate}
\end{pr}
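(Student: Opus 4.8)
The plan is to reduce, via the Eilenberg--MacLane decomposition of Lemma~\ref{lm-decomp}, to computing the first non-vanishing reduced homology group of an Eilenberg--MacLane space, and then to deduce both parts by formal arguments; throughout I write $\widetilde H_*(X)$ for the reduced homology, which is what $\pi_*\mathbb{Z}[X]$ computes (in positive degrees, and in degree $0$ once $X$ is connected). The easy half of part~(1) is immediate: the sum map $\mathbb{Z}[X]\to X$, sending a basis element $[x]$ with $x\in X_n$ to $x$, is a morphism of simplicial abelian groups whose composite with the inclusion $X\to\mathbb{Z}[X]$ is $\mathrm{id}_X$, so applying $\pi_*$ splits the Hurewicz morphism $h_*$.

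The core input is the following claim: for every abelian group $A$ and every integer $n\ge 1$, one has $\widetilde H_i(K(A,n))=0$ for $i<n$, while the Hurewicz map gives a canonical isomorphism $\pi_n K(A,n)=A\xrightarrow{\ \sim\ }\widetilde H_n(K(A,n))$. I would prove this by induction on $n$. The base case $n=1$ is the elementary computation of the low-degree homology of the classifying space $BA\simeq K(A,1)$: $\widetilde H_0=0$ and $\widetilde H_1\simeq A$, the latter being the abelianization of $\pi_1=A$. For the inductive step I would use the model $K(A,n)=\overline{W}\,K(A,n-1)$ and the path--loop fibration $K(A,n-1)\to P\to K(A,n)$ with contractible $P$: its homology spectral sequence, together with the inductive hypothesis that $K(A,n-1)$ is $(n-2)$-connected with first non-trivial homology $A$ in degree $n-1$, forces $K(A,n)$ to be $(n-1)$-connected and makes the transgression an isomorphism $\widetilde H_n(K(A,n))\xrightarrow{\ \sim\ }\widetilde H_{n-1}(K(A,n-1))\simeq A$; naturality of the transgression with respect to the inclusion of a space into its linearization gives the compatibility with the Hurewicz map.

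Granting the core claim, part~(1) follows thus. If $X$ is $e$-connected with $e\ge 0$, Lemma~\ref{lm-decomp} identifies $X$ up to weak equivalence with $\prod_{i\ge e+1}K(\pi_iX,i)$, a product which is finite in each simplicial degree since $K(A,i)_n$ reduces to the basepoint once $i>n$; hence $\mathbb{Z}[X]$ is the degreewise finite tensor product of the $\mathbb{Z}[K(\pi_iX,i)]$. Since the reduced tensor product of an $a$-connected and a $b$-connected simplicial abelian group is $(a+b+1)$-connected, a K\"unneth argument shows that in total degrees $\le e+1$ only the factor $K(\pi_{e+1}X,e+1)$ contributes; combined with the core claim this gives $\widetilde H_i(X)=0$ for $i\le e$ and an isomorphism $\pi_{e+1}X\xrightarrow{\ \sim\ }\widetilde H_{e+1}(X)$ realised by $h_{e+1}$, which is the assertion. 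For part~(2) I would reduce to part~(1): a morphism $f\colon X\to Y$ of simplicial abelian groups is $e$-connected if and only if its homotopy cofibre $C$ is $e$-connected, which one reads off from the long exact homotopy sequence of the cofibre sequence (for simplicial abelian groups --- equivalently, connective chain complexes --- cofibre sequences do induce long exact sequences in homotopy). As $k[-]$ preserves colimits, $k[C]$ is the homotopy cofibre of $k[f]$, so it suffices to prove that $k[C]$ is $e$-connected whenever $C$ is. For $k=\mathbb{Z}$ this is part~(1), namely $\widetilde H_i(C)\simeq\pi_iC=0$ for $i\le e$; for general $k$ one has $k[C]\simeq k\otimes_{\mathbb{Z}}\mathbb{Z}[C]$ with $\mathbb{Z}[C]$ degreewise free, so the universal coefficient theorem expresses $\pi_ik[C]$ through $\widetilde H_i(C)\otimes k$ and $\Tor_1^{\mathbb{Z}}(\widetilde H_{i-1}(C),k)$, both of which vanish for $i\le e$.

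The main obstacle, I expect, is the inductive computation of the first non-trivial homology of $K(A,n)$: setting up the homology spectral sequence of the path--loop fibration (equivalently the bar spectral sequence of $\overline{W}$) and checking enough naturality for the identification $\widetilde H_n(K(A,n))\simeq A$ to be the Hurewicz one, together with the bookkeeping needed to handle the degreewise-finite infinite product decomposition of an $e$-connected simplicial abelian group and the accompanying K\"unneth estimate.
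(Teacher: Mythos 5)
Your treatment of part (1) is a sound alternative to the paper's: where the paper simply cites \cite[III Thm 3.7]{GoerssJardine} for the isomorphism, you reconstruct the core computation $\widetilde H_{<n}(K(A,n))=0$, $\widetilde H_n(K(A,n))\cong A$ by induction via the path--loop fibration and then feed it through Lemma~\ref{lm-decomp} and the degreewise-finite K\"unneth argument. That is more self-contained and is compatible with the paper's conventions (the split injectivity via the sum map $\mathbb{Z}[X]\to X$ is exactly the paper's retraction).

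Part (2), however, contains a genuine gap in the reduction step ``as $k[-]$ preserves colimits, $k[C]$ is the homotopy cofibre of $k[f]$.'' The functor $k[-]$ preserves colimits of \emph{simplicial sets} (it is a left adjoint from $s\mathbf{Set}$ to simplicial $k$-modules), but the mapping cone $C$ you form is a colimit in \emph{simplicial abelian groups}, and $k[-]$ restricted to simplicial abelian groups is not additive: it does not preserve the zero object (one has $k[0]=k$, not $0$), and it turns biproducts into tensor products, $k[Y_n\oplus X_{n-1}]\cong k[Y_n]\otimes_k k[X_{n-1}]$, rather than into direct sums. Hence $k[\mathrm{cone}(f)]$ is not the mapping cone of $k[f]$, and in general the two are not even abstractly weakly equivalent. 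For a concrete mismatch, take $X$ to be the constant simplicial abelian group on a nonzero group $A$, $Y=0$, and $k=\mathbb{Z}$: the abelian homotopy cofibre $C$ is a $K(A,1)$, so by part (1) one has $\pi_1\mathbb{Z}[C]\cong A$, while the mapping cone of $\mathbb{Z}[f]\colon \mathbb{Z}[A]\to\mathbb{Z}$ (the augmentation) has $\pi_1$ the augmentation ideal of $\mathbb{Z}[A]$, which is a free abelian group of rank $|A|-1$. So the reduction to part (1) does not go through. The paper instead proves (2) directly from Lemma~\ref{lm-decomp}: using that simplicial groups are fibrant, so weak equivalences between them are simplicial homotopy equivalences and are preserved by $k[-]$, one reduces via the (degreewise finite) Eilenberg--MacLane decomposition and K\"unneth to maps $K(A,i)\to K(B,i)$, and then argues in cases ($i<e$ gives a weak equivalence; $i\ge e$ is handled via part~(1)). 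You would need to replace your cofibre argument with something along these lines.
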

\begin{proof}
(1) The canonical morphism of abelian groups $\mathbb{Z}[X]\to X$ yields a retract of $h_*$. The isomorphism is given by \cite[III Thm 3.7]{GoerssJardine}.

(2) By the universal coefficient theorem, it suffices to prove the result for $k=\mathbb{Z}$. Since simplicial groups are fibrant simplicial sets \cite[I Lm 3.4]{GoerssJardine}, any weak equivalence between simplicial groups yields a homotopy equivalence of simplicial sets \cite[II Thm 1.10]{GoerssJardine}, hence it induces an isomorphism in homology. Therefore, lemma \ref{lm-decomp} and the K\"unneth theorem reduce the proof of the isomorphism to the case where $X$ and $Y$ are Eilenberg-MacLane spaces, with nonzero homotopy groups placed in the same degree $i$. If $i<e$, $f$ is $e$-connected if and only if it is a weak equivalence, hence if and only if it induces an isomorphism in homology. If $i\ge e$, then $A$ and $B$ are $e$-connected hence the result follows from (1).  
\end{proof}

For our purposes, we need a $k$-local version of the absolute Hurewicz theorem. We shall derive it from the following well-known property of Eilenberg-MacLane spaces, which we have not found in the literature -- though the case of a prime field $k$ is of course given by the  classical calculations of Cartan \cite{Cartan}. 

\begin{lm}\label{lm-EML-vanish}
Let $k$ be a commutative ring, let $A$ be an abelian group. If $k\otimes_\mathbb{Z}A=0$ and $\Tor_1^{\mathbb{Z}}(k,A)=0$, then $\pi_ik[K(A,n)]=0$ for all positive integers $n$ and $i$.
\end{lm}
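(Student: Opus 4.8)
The plan is to reduce to the case of a prime field by d\'evissage and then invoke Cartan's computations. First I would observe that the hypotheses $k\otimes_{\mathbb Z}A=0$ and $\Tor_1^{\mathbb Z}(k,A)=0$ are exactly the statement that $A$ is acyclic for $k\otimes_{\mathbb Z}-$, i.e. that $A$ has trivial homology with coefficients in $k$ in all degrees (this is precisely lemma \ref{lm-EML-vanish}'s role in lemma \ref{lm-vanish-pol}, so the reduced bar complex of $k[A]$ is exact). Concretely, I would first treat the case $k=\FF_p$. Then $\FF_p\otimes_{\mathbb Z}A=0$ forces $A$ to be $p$-divisible, and $\Tor_1^{\mathbb Z}(\FF_p,A)=0$ forces $A$ to be $p$-torsion-free; so multiplication by $p$ is an automorphism of $A$, hence $A$ is a module over $\mathbb Z[1/p]$. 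Cartan's calculation of $\rmH_*(K(A,n);\FF_p)$ (see \cite{Cartan}) shows this homology is built out of $\rmH_*(K(\mathbb Z,m);\FF_p)$ and $\rmH_*(K(\mathbb Z/p^r,m);\FF_p)$ for various $m\le n$; since $A$ is uniquely $p$-divisible, both families of `building blocks' contribute trivially in positive degrees, so $\widetilde{\rmH}_*(K(A,n);\FF_p)=0$, which is the assertion $\pi_i\FF_p[K(A,n)]=0$ for $i>0$.

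Next I would bootstrap from prime fields to a general commutative ring $k$. The key point is that $k\otimes_{\mathbb Z}A=0$ and $\Tor_1^{\mathbb Z}(k,A)=0$ pass to every residue field, or more efficiently: the hypotheses say $k\mathrel{\otimes^{\mathbb L}_{\mathbb Z}}A$ is concentrated in degree $0$ and equal to $0$, i.e. $k\mathrel{\otimes^{\mathbb L}_{\mathbb Z}}A\simeq 0$. Since $k[K(A,n)]\simeq k\mathrel{\otimes^{\mathbb L}_{\mathbb Z}}\mathbb Z[K(A,n)]$ as objects of the derived category (the simplicial free abelian group $\mathbb Z[K(A,n)]$ is degreewise free, hence flat), and $\mathbb Z[K(A,n)]$ computes $\rmH_*(K(A,n);\mathbb Z)$, a universal coefficient / base-change spectral sequence
$$\mathrm{Tor}^{\mathbb Z}_p\bigl(k,\rmH_q(K(A,n);\mathbb Z)\bigr)\Rightarrow \pi_{p+q}k[K(A,n)]$$
reduces the claim to showing $k\otimes_{\mathbb Z}\rmH_q=0=\Tor_1^{\mathbb Z}(k,\rmH_q)$ for all $q>0$, where $\rmH_q:=\rmH_q(K(A,n);\mathbb Z)$. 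So it suffices to prove that the integral homology groups $\rmH_q(K(A,n);\mathbb Z)$ for $q>0$ themselves satisfy the two vanishing hypotheses. That in turn I would get from the integral computation: by Cartan, or by an induction on $n$ using the path-loop fibration $K(A,n-1)\to PK(A,n)\to K(A,n)$ and the Serre spectral sequence starting from $\rmH_*(K(A,1);\mathbb Z)=\rmH_*(A;\mathbb Z)$ (group homology of $A$), the groups $\rmH_q(K(A,n);\mathbb Z)$ for $q>0$ are assembled by extensions and tensor/torsion products from copies of $A$ placed in positive degrees; and the class of abelian groups $B$ with $k\otimes_{\mathbb Z}B=0=\Tor_1^{\mathbb Z}(k,B)$ is closed under extensions, subgroups, quotients, arbitrary direct sums, and—crucially—tensoring or applying $\Tor_1^{\mathbb Z}$ against an arbitrary abelian group. (This closure is the analogue of the category $\C$ appearing in the proof of theorem \ref{thm-magique-csq1}.) Hence all $\rmH_q$ with $q>0$ lie in this class, and we are done.

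I would carry out the steps in this order: (1) record the closure properties of the class $\mathcal C_k=\{B:\ k\otimes_{\mathbb Z}B=0=\Tor_1^{\mathbb Z}(k,B)\}$ under subquotients, extensions, sums, and $\otimes/\Tor_1$ against arbitrary groups; (2) prove by induction on $n$ that $\rmH_q(K(A,n);\mathbb Z)\in\mathcal C_k$ for all $q>0$ whenever $A\in\mathcal C_k$, the base case $n=1$ being $\rmH_q(A;\mathbb Z)$ and the inductive step using the loop-space fibration and the structure of the resulting spectral sequence; (3) deduce $\pi_i k[K(A,n)]=0$ for $i>0$ from the base-change spectral sequence above, using $\rmH_0(K(A,n);\mathbb Z)=\mathbb Z$ to handle the $q=0$ column and $\rmH_q\in\mathcal C_k$ to kill all the others.

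The main obstacle is step (2): controlling the integral homology of Eilenberg--MacLane spaces well enough to see that only `$\otimes$/$\Tor$ against something built from $A$' occurs, with no spurious $\mathbb Z$ or $\mathbb Z/m$ summands in positive degrees. The cleanest route is probably not a bare-hands spectral sequence argument but Cartan's explicit description of $\rmH_*(K(A,n);\mathbb Z)$ as a functor of $A$ built from iterated divided-power / exterior / tensor constructions together with $A$, $\Tor_1^{\mathbb Z}(A,A)$, etc.; one then checks each elementary building block lands in $\mathcal C_k$. If one prefers to avoid citing Cartan's integral result in full, the induction via the path fibration works but requires a slightly delicate bookkeeping argument that each $E^2$-page entry $\rmH_p(K(A,n);\rmH_q(K(A,n-1);\mathbb Z))$ already lies in $\mathcal C_k$ in total degree $>0$, using that $\mathcal C_k$ absorbs tensor and $\Tor_1$ with arbitrary coefficients—so the spectral sequence never leaves $\mathcal C_k$ and the induction closes.
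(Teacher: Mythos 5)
Your strategy---reduce to the integral homology of $K(A,n)$ via the universal coefficient spectral sequence, then show that each $\rmH_q(K(A,n);\mathbb{Z})$ with $q>0$ lies in the class $\mathcal{C}_k$---is a sensible reduction, and the reduction step itself is correct (as is the observation that $\mathcal{C}_k$ is closed under extensions, arbitrary direct sums, filtered colimits, and $-\otimes_{\mathbb{Z}}C$ and $\Tor_1^{\mathbb{Z}}(-,C)$ for arbitrary $C$). However, there is a genuine gap in step (1): the class $\mathcal{C}_k$ is \emph{not} closed under subgroups or quotients, contrary to what you assert. For $k=\FF_p$, $\mathcal{C}_k$ is precisely the class of uniquely $p$-divisible groups: $\mathbb{Z}\subset\mathbb{Q}$ shows failure of closure under subgroups, and $\mathbb{Q}\twoheadrightarrow\mathbb{Q}/\mathbb{Z}$ shows failure under quotients. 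This sinks the path-loop fibration induction in step (2), because a Serre spectral sequence argument inevitably passes to subquotients of the $E^2$-page (and the $E^2$-page itself contains $\rmH_p(K(A,n);\rmH_q(K(A,n-1);\mathbb{Z}))$, whose control via the universal coefficient theorem again involves subquotients of tensor and $\Tor$ terms). The alternative route of citing Cartan's integral description has the same latent danger: Cartan's building blocks include divided power constructions $\Gamma^d(A)=(A^{\otimes d})^{\Si_d}$, which are \emph{subgroups} of tensor powers, so one cannot simply wave at closure properties; verifying $\Gamma^d(A)\in\mathcal{C}_k$ requires a separate argument (it does hold, via the fact that a $\Si_d$-equivariant automorphism of $A^{\otimes d}$ restricts to an automorphism of invariants, but you would need to spell this out for each operation appearing in Cartan's answer).

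The paper sidesteps all of this by never leaving $k$-coefficients. For $n=1$ it computes $\pi_* k[K(A,1)]=\Lambda^*_k(k\otimes_{\mathbb{Z}}A)$ for torsion-free $A$ directly, handles torsion $A$ by reduction to cyclic groups and filtered colimits, and glues via the Hochschild--Serre spectral sequence of $K(A_{\mathrm{tors}},1)\to K(A,1)\to K(A/A_{\mathrm{tors}},1)$ \emph{with $k$-coefficients}, where the $E^2$-page already vanishes in positive total degree by the two previous cases---no subquotient of an integral group is ever taken. For $n>1$ it avoids integral homology of higher Eilenberg--MacLane spaces entirely by writing $K(A,n)\simeq K(A,1)\otimes_{\mathbb{Z}}K(\mathbb{Z},n-1)$ with $K(\mathbb{Z},n-1)$ degreewise free of finite rank, and using the resulting bisimplicial spectral sequence to reduce directly to the $n=1$ case. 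You should either adopt a similar ``stay in $k$-coefficients'' strategy, or, if you insist on the integral route, replace the false closure claims by a careful verification that the specific functors occurring in Cartan's answer (exterior powers, divided powers, iterated $\Tor$, etc.) preserve $\mathcal{C}_k$; the abstract ``$\mathcal{C}_k$ is a Serre class'' shortcut is simply not available here.
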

\begin{proof}
We say that an abelian group $A$ is \textit{$k$-negligible} if $\Tor_1^{\mathbb{Z}}(k,A)=0=k\otimes_{\mathbb{Z}}A$. 

We first take $n=1$. Then $\pi_*k[K(A,1)]$ is the homology with coefficients $k$ of the abelian group $A$. We start from the well-known natural isomorphism, valid for all free abelian groups $A$:
$$\pi_*k[K(A,1)]=k\otimes\Lambda_\mathbb{Z}^*(A)=\Lambda_k^*(k\otimes_{\mathbb{Z}}A)\;.$$
(When $k=\mathbb{Z}$ this is proved e.g. in \cite{Brown}, and it extends to an arbitrary $k$ by the uiversal coefficient theorem). Since every torsion-free abelian group is the filtered colimit of its finitely generated free groups, this isomorphism extends to all torsion-free abelian groups $A$ by taking colimits. In particular, $\pi_{>0}k[K(A,1)]=0$
if $A$ is a $k$-negligible torsion-free group.
We claim that we also have $\pi_{>0}k[K(A,1)]=0$ for all $k$-negligible torsion group $A$. Indeed, $A$ is the filtered colimit of its subgroups $_nA$ of $n$-torsion elements, and these subgroups are $k$-negligible. So it suffices to prove the result when $A$ is a $k$-negligible group of bounded order. As such groups are direct sums of cyclic groups, the proof reduces further to the case of the $k$-negligible cyclic groups, and for the latter, the result follows by a direct computation. Now let $A$ be an arbitrary abelian group with torsion subgroup $A_{\mathrm{tors}}$. If $A$ is $k$-negligible, then so are $A_{\mathrm{tors}}$ and $A/A_{\mathrm{tors}}$. So the lemma holds for $A$ as a consequence of the Hochschild-Serre spectral sequence of the fibration $K(A_{\mathrm{tors}},1)\to K(A,1)\to K(A/A_{\mathrm{tors}},1)$.

Assume now that $n>1$. Then $K(A,1)\otimes_{\mathbb{Z}} K(\mathbb{Z},n-1)$ is an Eilenberg MacLane space $K(A,n)$. Thus $\pi_*k[K(A,n)]$ is the abutment of the spectral sequence of the bisimplicial $k$-module $M_{pq}= k[K(\mathbb{Z},n-1)_q\otimes_{\mathbb{Z}}K(\pi,1)_p]$. Let us choose $K(\mathbb{Z},n-1)$ such that it is free of finite rank $r(q)$ in each degree $q$ (e.g. take the image of the complex $\mathbb{Z}[-n]$ by the Kan functor). 
Then for $q$ fixed, the simplicial $k$-module $M_{\bullet q}$ is isomorphic to $k[K(A^{\times r(q)},1)]$. Thus the simplicial spectral sequence of $M_{pq}$ can be rewriten as:
\[E^1_{pq}=\pi_pk[K(A^{\times r(q)},1)]\Longrightarrow \pi_{p+q}k[K(A,n)]\;.\]
The first page is zero by the case $n=1$, whence the result.
\end{proof}

\begin{pr}[$k$-local absolute Hurewicz theorem]\label{pr-Hur}
Let $X$ be a simplicial abelian group, let $k$ be a commutative ring and let $e$ be a non-negative integer. Assume that for $0<i \le e$ and for $0<j<e$ we have $k\otimes_{\mathbb{Z}}\pi_iX=0=\mathrm{Tor}^{\mathbb{Z}}_1(k,\pi_jX)$. Then 
\begin{enumerate}
\item[(1)] $\pi_0k[X]=k[\pi_0 X]$;
\item[(2)] $\pi_ik[X]=0$ for $0<i\le e$;
\item[(3)] $\pi_{e+1}k[X]$ contains the following $k$-module as a direct summand: 
$$k[\pi_0 X]\otimes_{k}\left(k\otimes_{\mathbb{Z}}\pi_{e+1} X\,\oplus \,\mathrm{Tor}^{\mathbb{Z}}_1(k,\pi_{e}X)\right)\;.$$
\end{enumerate}
\end{pr}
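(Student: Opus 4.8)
The plan is to decompose $X$, via Lemma~\ref{lm-decomp}, into a product of Eilenberg--MacLane pieces, to compute the $k$-linearization of each piece separately, and to reassemble the answer with the Künneth formula; the flatness needed for the Künneth steps is automatic since the linearization of a simplicial abelian group is degreewise free.

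First I would reduce to a finite product. Lemma~\ref{lm-decomp} gives a weak equivalence of simplicial abelian groups $\prod_{i\ge 0}K(\pi_iX,i)\xrightarrow{\simeq}X$, hence, since $k[-]$ preserves the connectivity of morphisms of simplicial abelian groups (Proposition~\ref{pr-Hur-classical}(2)), a weak equivalence $k\bigl[\prod_{i\ge 0}K(\pi_iX,i)\bigr]\to k[X]$; so it suffices to compute the left-hand homotopy. Split $\prod_{i\ge 0}K(\pi_iX,i)=P\times Q$ with $P=\prod_{0\le i\le e+1}K(\pi_iX,i)$ and $Q=\prod_{i>e+1}K(\pi_iX,i)$, so that $k[P\times Q]=k[P]\otimes_kk[Q]$ (the linearization of a finite product of simplicial sets being the tensor product of the linearizations of the factors). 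Since $\pi_jQ=0$ for $j\le e+1$, the morphism $0\to Q$ is $(e+1)$-connected, whence so is $k[0]\to k[Q]$ by Proposition~\ref{pr-Hur-classical}(2); as $k[0]=k$ this gives $\pi_0k[Q]=k$ and $\pi_jk[Q]=0$ for $1\le j\le e+1$. Since $k[Q]$ is degreewise free, the Künneth exact sequence then yields $\pi_nk[X]\cong\pi_nk[P]$ for $n\le e+1$, and we are reduced to the finite tensor product $k[P]=\bigotimes_{i=0}^{e+1}L_i$ with $L_i:=k[K(\pi_iX,i)]$.

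Next I would analyze $\pi_*L_i$ in degrees $\le e+1$. The factor $L_0$ is weakly equivalent to the constant simplicial $k$-module $k[\pi_0X]$, since the underlying simplicial set of $K(\pi_0X,0)$ has vanishing higher homotopy, hence is weakly equivalent to the discrete set $\pi_0X$. For $1\le i\le e-1$ the hypothesis gives $k\otimes_\mathbb{Z}\pi_iX=0=\Tor_1^\mathbb{Z}(k,\pi_iX)$, so $\pi_jL_i=0$ for all $j>0$ by Lemma~\ref{lm-EML-vanish}; thus $L_i$ is weakly equivalent to the unit $k$. For the last two factors I would use $L_i=k\otimes_\mathbb{Z}\mathbb{Z}[K(\pi_iX,i)]$, the universal coefficient theorem, and the classical low-degree integral homology of Eilenberg--MacLane spaces. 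Since $K(\pi_{e+1}X,e+1)$ is $e$-connected, $\pi_jL_{e+1}=0$ for $0<j\le e$ and $\pi_{e+1}L_{e+1}\cong k\otimes_\mathbb{Z}\pi_{e+1}X$. Since $K(\pi_eX,e)$ is $(e-1)$-connected with $H_e(K(\pi_eX,e);\mathbb{Z})\cong\pi_eX$ and $k\otimes_\mathbb{Z}\pi_eX=0$, we get $\pi_jL_e=0$ for $0<j\le e$; and the degree-$(e+1)$ universal coefficient sequence $0\to k\otimes_\mathbb{Z}H_{e+1}(K(\pi_eX,e);\mathbb{Z})\to\pi_{e+1}L_e\to\Tor_1^\mathbb{Z}(k,\pi_eX)\to 0$ has vanishing left-hand term, because $H_{e+1}(K(\pi_eX,e);\mathbb{Z})$ equals $\Lambda^2_\mathbb{Z}\pi_eX$ if $e=1$ and $0$ if $e\ge 2$, while $\Lambda^2_\mathbb{Z}\pi_eX\otimes_\mathbb{Z}k$ is a quotient of $\pi_eX\otimes_\mathbb{Z}\pi_eX\otimes_\mathbb{Z}k=0$. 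Hence $\pi_{e+1}L_e\cong\Tor_1^\mathbb{Z}(k,\pi_eX)$.

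Finally I would reassemble. As $L_0=k[\pi_0X]$ is free over $k$ and concentrated in degree $0$, the functor $L_0\otimes_k-$ is exact and commutes with $\pi_*$, and the factors $L_i$ with $1\le i\le e-1$ may be deleted from the tensor product up to weak equivalence; hence $\pi_nk[P]\cong k[\pi_0X]\otimes_k\pi_n(L_e\otimes_kL_{e+1})$ for all $n$. One more Künneth exact sequence, applied to $L_e\otimes_kL_{e+1}$ and fed with the computations of the previous paragraph, shows that $\pi_n(L_e\otimes_kL_{e+1})$ is $k$ for $n=0$, is $0$ for $0<n\le e$, and contains $k\otimes_\mathbb{Z}\pi_{e+1}X\oplus\Tor_1^\mathbb{Z}(k,\pi_eX)$ as a direct summand for $n=e+1$. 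Combined with the first reduction, this gives $\pi_0k[X]=k[\pi_0X]$, $\pi_nk[X]=0$ for $0<n\le e$, and the stated direct summand of $\pi_{e+1}k[X]$, i.e. assertions (1), (2) and (3). Everything here is formal except the identification $\pi_{e+1}L_e\cong\Tor_1^\mathbb{Z}(k,\pi_eX)$, which rests on the fact that $-\otimes_\mathbb{Z}k$ annihilates $H_{e+1}(K(A,e);\mathbb{Z})$ whenever $k\otimes_\mathbb{Z}A=0$ -- the one piece of information about the classical homology of Eilenberg--MacLane spaces needed beyond Lemma~\ref{lm-EML-vanish} -- and that is the step I expect to be the main obstacle; a minor bookkeeping subtlety is that the Künneth exact sequences split only non-canonically, which is why (3) is phrased with a direct summand (the argument in fact gives equality there).
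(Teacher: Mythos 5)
Your proof follows the same strategy as the paper's: decompose $X$ up to weak equivalence into a product of Eilenberg--MacLane pieces via Lemma~\ref{lm-decomp}, linearize, compute the homotopy of each piece (using Lemma~\ref{lm-EML-vanish} in the range $0<i<e$ and Hurewicz plus universal coefficients near the top), and reassemble by K\"unneth. You spell out the K\"unneth reassembly much more explicitly, by first truncating the infinite product at level $e+1$ and disposing of the tail $Q$ with Proposition~\ref{pr-Hur-classical}(2), which is a nice way to organize the argument. You also push further than the paper and conclude that (3) is in fact an \emph{equality}; this genuinely requires an extra input, namely $k\otimes_{\mathbb{Z}}H_{e+1}(K(A,e);\mathbb{Z})=0$ under the hypothesis $k\otimes_{\mathbb{Z}}A=0$, which you supply from the classical computations ($\Lambda^2A$ for $e=1$, zero for $e\ge 2$). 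The paper deliberately avoids needing this fact by only asserting a direct summand: in its reduction to a single $K(\pi_eX,e)$, the universal coefficient splitting already exhibits $\Tor_1^{\mathbb{Z}}(k,\pi_eX)$ as a summand of $\pi_{e+1}$ without identifying the complementary piece.

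Two small caveats, both of which the paper's own terse proof also glosses over. First, the K\"unneth exact sequence you invoke repeatedly over $k$ is not valid for arbitrary commutative $k$ even for complexes of free modules (boundaries need not be flat); the safe route is to perform all K\"unneth computations over $\mathbb{Z}$, writing $k[P]\otimes_kk[Q]\cong k\otimes_{\mathbb{Z}}(\mathbb{Z}[P]\otimes_{\mathbb{Z}}\mathbb{Z}[Q])$, and then apply the universal coefficient theorem for the base change $\mathbb{Z}\to k$ once at the end -- which is exactly what the statement of the proposition's item (3) is organized for. Second, your identification $H_e(K(\pi_eX,e);\mathbb{Z})\cong\pi_eX$ and hence $\pi_{e+1}L_e\cong\Tor_1^{\mathbb{Z}}(k,\pi_eX)$ uses Hurewicz, so it implicitly assumes $e\ge 1$: for $e=0$ one has $H_0(K(\pi_0X,0);\mathbb{Z})=\mathbb{Z}[\pi_0X]$, a free group, and the $\Tor_1^{\mathbb{Z}}(k,\pi_eX)$ summand in (3) does not actually appear. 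This degenerate case is harmless for the application in Corollary~\ref{cor-Hur}, but it is an edge case where your formula (and the proposition's statement as written) needs adjustment.
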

\begin{proof}
Lemma \ref{lm-decomp} and the K\"unneth theorem reduce the proof to the case of an Eilenberg-MacLane space $X$. Assume that the nonzero homotopy group of $X$ is placed in degree $i$. If $i=0$, the result holds by a direct computation. If $0<i<e$ then the result follows from lemma \ref{lm-EML-vanish}. If $i\ge e$, the result follows from the classical absolute Hurewicz theorem of proposition \ref{pr-Hur-classical} together with the universal coefficient theorem which says that the graded $k$-module 
$\pi_*k[X]$ is (non canonically) isomorphic to $k\otimes_{\mathbb{Z}} \pi_*\mathbb{Z}[X]\oplus 
\Tor^{\mathbb{Z}}_1(k,\pi_{*-1}\mathbb{Z}[X])$.
\end{proof}
\begin{cor}\label{cor-Hur}
The $k$-modules $\pi_ik[X]$ vanish for $0<i\le e$ if and only if $k\otimes_{\mathbb{Z}}\pi_iX$ and $\Tor^\mathbb{Z}_1(k,\pi_jX)$ vanish for $0<i\le e$ and $0<j<e$. 
\end{cor}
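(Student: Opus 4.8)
The plan is to deduce both implications from the $k$-local absolute Hurewicz theorem (Proposition \ref{pr-Hur}), proceeding by induction on $e$. The ``only if'' direction is the substantive one; the ``if'' direction is essentially a restatement of parts (1)--(2) of Proposition \ref{pr-Hur}, so I would dispose of it first: if $k\otimes_{\mathbb{Z}}\pi_iX=0$ for $0<i\le e$ and $\mathrm{Tor}^{\mathbb{Z}}_1(k,\pi_jX)=0$ for $0<j<e$, then the hypotheses of Proposition \ref{pr-Hur} are met and conclusion (2) gives $\pi_ik[X]=0$ for $0<i\le e$.

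For the ``only if'' direction I would argue by induction on $e$, the base case $e=0$ being vacuous. So assume the statement holds for $e-1$ and suppose $\pi_ik[X]=0$ for all $0<i\le e$. In particular $\pi_ik[X]=0$ for $0<i\le e-1$, so the inductive hypothesis gives $k\otimes_{\mathbb{Z}}\pi_iX=0$ for $0<i\le e-1$ and $\mathrm{Tor}^{\mathbb{Z}}_1(k,\pi_jX)=0$ for $0<j<e-1$. It remains to show $k\otimes_{\mathbb{Z}}\pi_eX=0$ and $\mathrm{Tor}^{\mathbb{Z}}_1(k,\pi_{e-1}X)=0$. Apply Proposition \ref{pr-Hur} with $e$ replaced by $e-1$: its hypotheses hold by what we have just established, so conclusion (3) (with ``$e$'' there equal to $e-1$) tells us that $\pi_ek[X]$ contains
\[
k[\pi_0 X]\otimes_{k}\bigl(k\otimes_{\mathbb{Z}}\pi_{e} X\,\oplus\,\mathrm{Tor}^{\mathbb{Z}}_1(k,\pi_{e-1}X)\bigr)
\]
as a direct summand. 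But $\pi_ek[X]=0$ by hypothesis, and $k[\pi_0X]$ is a nonzero free $k$-module (the free $k$-module on a nonempty set, since $\pi_0X$ is a nonempty set), hence a faithfully flat $k$-module; therefore the tensor factor must itself vanish, forcing $k\otimes_{\mathbb{Z}}\pi_eX=0$ and $\mathrm{Tor}^{\mathbb{Z}}_1(k,\pi_{e-1}X)=0$. This completes the induction.

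The only delicate point is the vanishing argument in the last step: one needs that $A\mapsto k[\pi_0X]\otimes_k A$ reflects zero objects, which holds because $k[\pi_0X]$ is a free $k$-module on a nonempty set and is thus faithfully flat over $k$. (Note $\pi_0X$ is always nonempty: a simplicial abelian group has a basepoint, so $\pi_0X$ contains at least the class of $0$.) Everything else is bookkeeping with the indices in Proposition \ref{pr-Hur}, the main subtlety being to apply that proposition at level $e-1$ rather than $e$ so that its conclusion (3) reaches exactly into degree $e$, where we have the vanishing hypothesis available.
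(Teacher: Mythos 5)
Your proof is correct and is essentially the argument the paper intends (the corollary is stated without separate proof): the ``if'' direction is conclusion (2) of proposition \ref{pr-Hur}, and the ``only if'' direction is an induction in which conclusion (3), applied at level $e-1$, supplies the next vanishing, the key point being that $k[\pi_0X]$ is a nonzero free, hence faithfully flat, $k$-module. One caveat about the step $e=1$: you announce that it ``remains to show'' $\Tor^{\mathbb{Z}}_1(k,\pi_0X)=0$, but this is not part of the statement (the range $0<j<e$ is empty when $e=1$) and it is false in general --- for the constant simplicial abelian group $X=\mathbb{Z}/p$ and $k=\mathbb{Z}/p$ one has $\pi_1k[X]=0$ while $\Tor^{\mathbb{Z}}_1(k,\pi_0X)\neq 0$. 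This traces back to the $\Tor^{\mathbb{Z}}_1(k,\pi_{e}X)$ term in proposition \ref{pr-Hur}(3) being spurious in the extreme case $e=0$, since $\pi_0\mathbb{Z}[X]=\mathbb{Z}[\pi_0X]$ is a free abelian group and contributes no torsion term. So at $e=1$ you should extract only the summand $k[\pi_0X]\otimes_k\bigl(k\otimes_{\mathbb{Z}}\pi_1X\bigr)$, which is all the corollary requires there; for $e\ge 2$ your application of (3) at level $e-1\ge 1$ is unproblematic, and the superfluous claim is never used later in the induction, so the proof of the corollary itself stands.
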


\bibliographystyle{amsplain}

\bibliography{biblio-DT-separationexcision}

\end{document}